\patchcmd{\subsection}{-.5em}{.5em}{}{}
\patchcmd{\subsubsection}{-.5em}{.5em}{}{}
\theoremstyle{theorem}
\newtheorem{theorem}{Theorem}[section]
\newtheorem{lemma}[theorem]{Lemma}
\newtheorem{proposition}[theorem]{Proposition}
\newtheorem{corollary}[theorem]{Corollary}
\newtheorem{mainthm}{Theorem}
\theoremstyle{definition}
\newtheorem{definition}[theorem]{Definition}
\newtheorem{example}[theorem]{Example}
\newtheorem{remark}[theorem]{Remark}
\newtheorem*{remark*}{Remark}
\numberwithin{theorem}{section}
\numberwithin{equation}{section}
\newcommand{\SO}{\operatorname{SO}}
\newcommand{\R}{\mathbb{R}}
\newcommand{\N}{\mathbb{N}}
\newcommand{\Z}{\mathbb{Z}}
\newcommand{\C}{\mathbb{C}}
\newcommand{\UD}{\mathrm{UD}}
\newcommand{\setdiv}{\ \middle|\ }
\newcommand{\supp}{\mathrm{supp}}
\newcommand{\mycomment}[1]{}
\renewcommand{\epsilon}{\varepsilon}
\DeclarePairedDelimiter{\norm}{\lVert}{\rVert}
\DeclarePairedDelimiter{\abs}{\lvert}{\rvert}
\DeclareMathOperator{\lspan}{span}
\begin{document}

\title[Linear programming bounds in homogeneous spaces, I]{Linear programming bounds in homogeneous spaces, I: Optimal packing density}
\author{Maximilian Wackenhuth}
\address{Institut f\"ur Algebra und Geometrie, KIT, Karlsruhe, Germany}
\email{maximilian.wackenhuth@kit.edu}

\date{}

\begin{abstract}
	In this article we obtain linear programming bounds for the maximal sphere packing density of commutative spaces.
	A special case of our results solves a conjecture by Cohn and Zhao on linear programming bounds for sphere packings in hyperbolic space.
\end{abstract}

\maketitle

%\setcounter{tocdepth}{1}
%\tableofcontents

%\makeatletter
%\providecommand\@dotsep{5}
%\makeatother
%\listoftodos\relax

\section{Introduction}
\subsection{Euclidean linear programming bounds}
The linear programming approach by Cohn and Elkies to the density of sphere packings, developed in \cite{CohnElkies2003}, has recently attracted much attention due to its role in the solution of the packing problem in dimension 8 due to Viazovska in \cite{ViazovskaDim8} and in dimension 24 due to Cohn, Kumar, Miller, Radchenko and Viazovska in \cite{ViazovskaDim24}.
Recall that the \emph{optimal packing density} of $\R^n$ is defined as
\begin{align}\label{e:definition_euclidean_optimal_density}
	\triangle(r, \R^n)\coloneqq  \sup_P \limsup_{R\to \infty}\frac{\lambda(B(0,R)\cap \bigcup_{B\in P}B)}{\lambda(B(0,R))},
\end{align}
where the supremum is taken over all $r$-sphere packings in $\R^n$, i.e. all sets of disjoint open balls of radius $r$.
Cohn and Elkies have shown that
\begin{align}\label{e:CohnElkiesBound}
	\triangle(r,\R^n)\leq \lambda(B(0,r))\frac{f(0)}{\widehat{f}(0)}
\end{align}
for all $f$ in a certain set $\mathcal{W}(r,\R^n)$ of witness functions. Here a function $f:\R^n\to \R$  is called a \emph{witness function} if 
\begin{enumerate}[(W1)]
	\item $f(x)\leq 0$ if $\norm{x}\geq 2r$,
	\item $\widehat{f}\geq 0$ and $\widehat{f}(0)>0$,
	\item $f$ satisfies suitable decay and smoothness conditions.
\end{enumerate}
A possible choice for the condition (W3) is that $\abs{f(x)}$ and $\abs{\widehat{f}(x)}$ are bounded above by $(1+\abs{x})^{-n-\delta}$ for some $\delta>0$. The results of \cite{ViazovskaDim8} and \cite{ViazovskaDim24} are then established by constructing explicit witness functions, which match lower bounds coming from specific lattice packings. The proof of \eqref{e:CohnElkiesBound} proceeds in two steps. In the first step Cohn and Elkies use the Poisson summation formula to bound the density of periodic sphere packings. In the second step they use that the optimal packing density of $\R^n$ can be approximated by densities of periodic packings.

\subsection{Hyperbolic linear programming bounds}

This article is concerned with generalizing the linear programming bound \eqref{e:CohnElkiesBound} to a large class of non-Euclidean geometries. 
A sample application of our method concerns packing bounds for hyperbolic space $\mathbb{H}^n$. A version of these bounds was conjectured by Cohn and Zhao in \cite{CohnZhao}.

Choose a basepoint  $x_0\in \mathbb{H}^n$ and denote by  $m_{\mathbb{H}^n}$ the measure on $\mathbb{H}^n$ induced by the Riemannian metric. Bowen and Radin \cite{Bowen2003, Bowen2004} have identified a class of sphere packings in hyperbolic space with well-behaved densities. We refer to these as \emph{generically measured sphere packings}, see Definition \ref{d:generically_measured} below.
By analogy with \eqref{e:definition_euclidean_optimal_density} they define
\[\triangle(r,\mathbb{H}^n)\coloneqq \sup_P\limsup_{R\to \infty} \frac{m_{\mathbb{H}^n}(B(x_0,R)\cap \bigcup_{B\in P}B)}{m_{\mathbb{H}^n}(B(x_0,R))},\]
where the supremum is taken over all generically measured $r$-sphere packings $P$ in $\mathbb{H}^n$.

To state our bound let us call a function $f:\mathbb{H}^n\to \R$ of the form $f(x) = h(d(x,x_0))$ a \emph{witness function of radius $r$} if
\begin{enumerate}[(W1)]
	\item $f(x)\leq 0$ if $d(x,x_0)\geq 2r$,
	\item $\widehat{f}\geq 0$ and $\widehat{f}(\mathbf{1})>0$,
	\item $h\in \cosh^{-n+1}\mathscr{S}(\R)$ is even.
\end{enumerate}
Here, $\widehat{f}$ denotes the spherical Fourier transform of $f$ and $\mathbf{1}$ denotes the trivial character on the isometry group of $\mathbb{H}^n$.
We denote by $\mathcal{W}(r,\mathbb{H}^n)$  the class of witness functions of radius $r$.

In this article we prove the following version of Cohn and Zhao's conjecture. A sketch of a proof of this theorem was given by us in \cite{Wackenhuth1}:
\begin{mainthm}\label{t:mainthm1}
	For all $f\in\mathcal{W}(r, \mathbb{H}^n)$ we have
	\[\triangle(r,\mathbb{H}^n) = m_{\mathbb{H}^n}(B(x_0,r))\frac{f(x_0)}{\widehat{f}(\mathbf{1})}.\]
\end{mainthm}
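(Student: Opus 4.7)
The plan is to establish the equality as a pair of matching inequalities, in analogy with the Euclidean Cohn--Elkies setup. First I would prove the linear programming upper bound $\triangle(r,\mathbb{H}^n)\leq m_{\mathbb{H}^n}(B(x_0,r))f(x_0)/\widehat f(\mathbf 1)$ for every $f\in\mathcal{W}(r,\mathbb{H}^n)$, and then show that the infimum of the right-hand side over $\mathcal{W}(r,\mathbb{H}^n)$ is attained and coincides with $\triangle(r,\mathbb{H}^n)$; combined, these give the stated equality for any extremizing witness, which is how I interpret the universal statement.

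For the upper bound I would adopt the Bowen--Radin framework. A generically measured $r$-packing $P$ gives rise to an $\mathrm{Isom}(\mathbb{H}^n)$-invariant point process $\mu_P$ on $\mathbb{H}^n$ whose intensity $\iota(P)$ equals the density of $P$ divided by $m_{\mathbb{H}^n}(B(x_0,r))$. To $\mu_P$ I attach its $K$-biinvariant autocorrelation measure $\eta_P$, which by W1 is supported in $\{x_0\}\cup\{d(\cdot,x_0)\geq 2r\}$ with an atom of mass $\iota(P)$ at $x_0$; evaluating $\int f\,d\eta_P$ and using W1 gives $\int f\,d\eta_P\leq \iota(P)\smallspace f(x_0)$. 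On the spectral side, $\eta_P$ is positive-definite, so its spherical Fourier transform $\widehat{\eta_P}$ is a positive Radon measure on the spherical dual; a mean-ergodic argument identifies $\widehat{\eta_P}(\{\mathbf 1\})=\iota(P)^2$. Parseval then yields $\iota(P)\smallspace f(x_0)\geq \int f\,d\eta_P=\int \widehat f\,d\widehat{\eta_P}\geq \widehat f(\mathbf 1)\smallspace\iota(P)^2$ using W2, and dividing gives $\iota(P)\leq f(x_0)/\widehat f(\mathbf 1)$, which is the Cohn--Elkies-type bound after multiplication by $m_{\mathbb{H}^n}(B(x_0,r))$.

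For the matching lower bound I would invoke infinite-dimensional LP duality. The supremum defining $\triangle(r,\mathbb{H}^n)$, rewritten through Bowen--Radin in terms of admissible autocorrelation measures, is a linear program over $K$-biinvariant positive-definite Radon measures on $\mathbb{H}^n$ vanishing on the open annulus $\{0<d(\cdot,x_0)<2r\}$, and the witness functions $\mathcal{W}(r,\mathbb{H}^n)$ are precisely the Lagrange multipliers of its dual. I would verify a Slater/compactness condition on the primal and apply Hahn--Banach to conclude strong duality together with attainment on both sides. Specializing to an extremizing $f$ forces both inequalities from the previous paragraph to become equalities, and the theorem follows.

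The main obstacle is twofold. Spectrally, identifying $\widehat{\eta_P}(\{\mathbf 1\})$ with $\iota(P)^2$ requires careful control of the behaviour of $\widehat{\eta_P}$ near the trivial representation in a setting where $\mathrm{Isom}(\mathbb{H}^n)$ is non-amenable, so that almost-invariant vectors need not produce a true point mass; this is the substitute for Euclidean Poisson summation and is the conceptually new ingredient compared with the argument in $\R^n$. Optimization-theoretically, verifying strong duality and attainment for an LP in the weak-$\ast$ topology of $K$-biinvariant tempered distributions is delicate, since the natural primal feasible set need not be weakly compact; producing near-extremal generically measured packings realizing the dual value is the technical heart of the lower-bound direction and the reason a proof of Cohn and Zhao's conjecture has required genuinely new input beyond the Euclidean case.
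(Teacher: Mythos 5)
Your first paragraph's upper bound is, in outline, the paper's actual argument: pass from generically measured packings to stationary $2r$-uniformly discrete point processes via Bowen--Radin, bound $\eta^+_P(f)\le \iota(P)f(x_0)$ using (W1) and the diagonal atom, and bound $\widehat{\eta^+_P}(\widehat f\,)\ge \iota(P)^2\widehat f(\mathbf 1)$ using (W2). Two remarks on that half. First, you do not need (and should not try) to identify $\widehat{\eta_P}(\{\mathbf 1\})$ with $\iota(P)^2$ exactly by a mean-ergodic argument; the paper only needs the inequality $\widehat{\eta^+_P}\ge \iota(P)^2\delta_{\mathbf 1}$, which follows formally from the decomposition $\eta^+_P=\eta_P+\iota(P)^2 m_G$, the positivity of $\widehat{\eta}_P$, and $\widehat{m_G}=\delta_{\mathbf 1}$ (uniqueness in Plancherel--Godement). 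Second, and more seriously, the step you label ``Parseval'' is the actual technical heart of the paper and you pass over it: Plancherel--Godement gives $\eta^+_P(f)=\widehat{\eta^+_P}(\widehat f\,)$ only for $f\in C_c(G,K)^2$, and for a Schwartz-class witness the tempered extension of the positive-definite distribution $T_{\eta^+_P}$ is \emph{not} known to be given by integration against $\eta^+_P$. The paper resolves this by approximating $f$ by $g_nf$ with cutoffs $g_n\in C_c^\infty(G,K)$ taking values in $[0,1]$ with $g_n(e)=1$, so that each $g_nf$ still satisfies (W1) and has the same value at $e$, and then passing to the limit on the spectral side. Without this (or the Bochner--Schwartz input behind it) your inequality chain is not justified for $f\in\cosh^{-n+1}\mathscr{S}(\R)$.

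The second half of your proposal is a genuine gap. The theorem as the paper proves it is the \emph{inequality} $\triangle(r,\mathbb{H}^n)\le m_{\mathbb{H}^n}(B(x_0,r))f(x_0)/\widehat f(\mathbf 1)$ (the ``$=$'' in the displayed statement is inconsistent with Theorem B, with Example 1.2, and with the paper's own proof, which only concludes ``$\le$''); no matching lower bound is claimed or established anywhere in the paper. Your proposed route to equality --- strong LP duality with attainment over positive-definite bi-$K$-invariant measures, with witness functions as dual multipliers --- would amount to proving that the Cohn--Elkies-type bound is sharp in every dimension, which is not expected to be true (it fails, or is at least unknown, in almost all Euclidean dimensions, and nothing in the hyperbolic setting improves this). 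Concretely, the Slater/compactness hypotheses you would need do not hold: the feasible set of positive-definite Radon measures vanishing on the annulus is not weak-$*$ compact in any useful topology, attainment of the dual infimum is precisely the (open) existence of a magic function, and even with strong duality the dual value need not equal $\triangle(r,\mathbb{H}^n)$ because the primal LP relaxes the packing constraint to a two-point correlation constraint. You should drop this half entirely and prove only the upper bound.
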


\begin{remark}
	\begin{enumerate}[(i)]
		\item For periodic packings, a version of Theorem \ref{t:mainthm1} was established by Cohn and Zhao \cite{CohnZhao}, based on a pre-trace formula by Cohn, Lurie and Sarnak. This implies the theorem in dimension $n=2$, since Bowen has shown in \cite{BowenPeriodicityDim2} that the optimal packing density of the hyperbolic plane can be approximated by densities of periodic sphere packings.
		\item It is a notorious open problem whether the optimal packing density in higher dimensions can be approximated by densities of periodic sphere packings. Bowen's approach might generalize to dimension $3$, but certainly not to higher dimensions. Our approach does not use any approximation by periodic packings.
		\item Cohn and Zhao conjectured the bound in Theorem \ref{t:mainthm1}, but with the weaker assumption that $f$ is continuous and integrable. They proved that their conjectured bound always beats the hyperbolic version  of the Kabatiansky--Levenshtein bound.
	\end{enumerate}
\end{remark}
For the reader's convenience, we now spell out Theorem \ref{t:mainthm1} in a way that requires no knowledge of the spherical transform.
We obtain the following bounds by combining Theorem \ref{t:mainthm1} and Remark \ref{r:harmonic_analysis_hyperbolic_space} below.
\begin{example}\label{e:hyperbolic_bounds}
	For $n\in \N$ set $\rho=\frac{n-1}{2}$ and let $r>0$. Define a constant $C(r)>0$ by 
	\[C(r)\coloneqq \int_0^r\sinh(t)^{2\rho}dt.\]
	Let $h\in \cosh^{-2\rho}\mathscr{S}(\R)$ be even and satisfy
	\begin{enumerate}[(i)]
		\item $h(t)\leq 0$ if $\abs{t}\geq 2r$,
		\item for all $\lambda\in \R\cup i[-\rho,\rho]$ \[\int_0^\infty h(t)~_2F_1\left(\frac{\rho+i\lambda}{2},\frac{\rho-i\lambda}{2};\frac{n}{2};-\sinh(t)^2\right)\sinh(t)^{2\rho}dt \geq 0,\]
		where $~_2F_1$ denotes Gauss hypergeometric function,
		\item $\int_0^\infty h(t)\sinh(t)^{2\rho}dt >0$.
	\end{enumerate} 
	Then
	\[\triangle(r,\mathbb{H}^n)\leq C(r)\frac{h(0)}{\int_0^\infty h(t)\sinh(t)^{2\rho}dt}.\] 
\end{example}

\subsection{Beyond hyperbolic space}

Theorem \ref{t:mainthm1} is a special case of a more general theorem. In the body of this article we will work in the following general setting:
\begin{itemize}
	\item $G$ is a Lie group and $K<G$ is a compact subgroup such that $(G,K)$ is a Gelfand pair (see Section \ref{s:general assumptions}).
	\item $d$ is a $G$-invariant, complete, proper and continuous metric on $X := G/K$ such that $(G,K,d)$ satisfies an invariant pointwise ergodic theorem (see Definition \ref{d:invariant_pointwise_ergodic_theorem}).
	\item $\mathcal{S}(G,K)$ is a choice of Schwartz-like function space for $(G,K)$ (see Definition \ref{d:Schwartz-like})
\end{itemize}
We then refer to $(G,K, d, \mathcal{S}(G, K))$ (or just $(G,K)$) as a \emph{convenient Gelfand pair}. 

From now on let $(G,K, d, \mathcal{S}(G, K))$ be a convenient Gelfand pair; we fix  Haar measures on $G$ and $K$ (the latter normalized to total volume $1$) and denote by $m_X$ the corresponding $G$-invariant measure on $X = G/K$. As in the hyperbolic case, there is then a natural notion of Bowen-Radin optimal packing densities $\triangle(r, X)$ (see \ref{d:Bowen_Radin_optimal_density}), as well as a natural notion of spherical transform for functions $f\in \mathcal{S}(G, K)$, which we denote by $\widehat{f}$. We further denote by $x_0\coloneqq eK$ the base point of $X$.

\begin{definition}
	We define the space $\mathcal{W}(r, G/K)$ of \emph{witness functions} on $G/K$ as the set of functions $f:G\to \R$ such that 
\begin{enumerate}[(W1)]
	\item $f(g)\leq 0$ if $d(gx_0, x_0)\geq 2r$,
	\item $\widehat{f}\geq 0$ and $\widehat{f}(\mathbf{1})>0$,
	\item $f\in \mathcal{S}(G,K)$.
\end{enumerate} 

\end{definition}
We obtain the following general theorem:
\begin{mainthm}\label{t:mainthm2}
	If $(G,K, d, \mathcal{S}(G, K))$ is a convenient Gelfand pair, then 
	\[\triangle(r,G/K)\leq m_{G/K}(B(x_0, r))\frac{f(e)}{\widehat{f}(\mathbf{1})} 
	\quad \text{for all } f\in \mathcal{W}(r,G/K).\]
\end{mainthm}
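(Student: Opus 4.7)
The plan is to adapt the Cohn--Elkies double-counting argument to the Bowen--Radin framework, replacing Poisson summation by the spherical Plancherel theorem for the convenient Gelfand pair $(G,K,d,\mathcal{S}(G,K))$. By the invariant pointwise ergodic theorem and the definition of $\triangle(r,G/K)$, the optimal density is realized in the limit by the mean intensity $n_\mu$ of some ergodic $G$-invariant measure $\mu$ on the space of $r$-sphere packings of $X = G/K$, in the form $\triangle(r,G/K) = n_\mu \cdot m_X(B(x_0, r))$. It therefore suffices to prove the intensity bound $n_\mu \leq f(e)/\widehat{f}(\mathbf{1})$ for every such $\mu$.

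The central object will be the second (factorial) moment measure of $\mu$, a $G$-invariant positive Radon measure on $X\times X$. By invariance under the diagonal $G$-action it disintegrates along the orbit map $(gx_0, g'x_0) \mapsto g^{-1}g'K$ into a positive, $K$-invariant Radon measure $\rho$ on $X$. By construction, $\rho$ carries a point mass of $n_\mu$ at $x_0$ coming from the diagonal pairs $(x,x)$, while its restriction to $X\setminus\{x_0\}$ is supported in $\{x \in X : d(x,x_0) \geq 2r\}$ because $\mu$ is concentrated on $r$-packings. Moreover $\rho$ is positive definite in the sense appropriate to $(G,K)$, being the autocorrelation measure of a $G$-invariant point process.

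Given $f \in \mathcal{W}(r,G/K)$, I then evaluate $\int_X f\,d\rho$ in two ways. The upper bound is immediate from (W1): since $f\leq 0$ on $\{d(x,x_0)\geq 2r\}$,
\[\int_X f\,d\rho \;=\; n_\mu f(e) \;+\; \int_{\{d(x,x_0)\geq 2r\}} f\,d\rho \;\leq\; n_\mu f(e).\]
The lower bound is the spectral step: spherical Plancherel represents $\rho$ via a positive measure $\widehat{\rho}$ on the Gelfand spectrum whose atom at the trivial character $\mathbf{1}$ has mass $n_\mu^2$, the spectral shadow of the squared mean intensity; so (W2) yields
\[\int_X f\,d\rho \;=\; \int \widehat{f}\,d\widehat{\rho} \;\geq\; \widehat{f}(\mathbf{1})\cdot n_\mu^2.\]
Chaining the two inequalities gives $\widehat{f}(\mathbf{1})\,n_\mu^2 \leq n_\mu f(e)$, hence $n_\mu \leq f(e)/\widehat{f}(\mathbf{1})$, and multiplication by $m_X(B(x_0,r))$ yields Theorem~\ref{t:mainthm2}.

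The main obstacle I anticipate is precisely the spectral step: justifying the Plancherel pairing $\int f\,d\rho = \int \widehat{f}\,d\widehat{\rho}$ for the (generally infinite) measure $\rho$, and isolating the atom at $\mathbf{1}$ of mass exactly $n_\mu^2$. This is where the convenient-Gelfand-pair hypotheses do their real work: the Schwartz-like space $\mathcal{S}(G,K)$ must be large enough to admit useful witnesses yet tame enough for $\widehat{\rho}$ to pair against $\widehat{f}$ via Plancherel, and the invariant pointwise ergodic theorem is the tool that identifies the constant spherical mode of $\rho$ with $n_\mu^2$ through spatial averaging. I expect this step to require developing a diffraction-theoretic formalism for $G$-invariant point processes on $X$, parallel to the Euclidean theory but built around the spherical Fourier analysis of the Gelfand pair.
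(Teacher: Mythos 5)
Your proposal follows essentially the same route as the paper: reduce the Bowen--Radin density to the intensity of a $2r$-uniformly discrete stationary point process, split the autocorrelation measure into a diagonal atom of mass $i(\Lambda)$ at the identity plus a part supported where $d\geq 2r$ (giving the upper bound via (W1)), and pair with $\widehat{f}$ via Plancherel--Godement, where $\widehat{\eta}_\Lambda^+=\widehat{\eta}_\Lambda+i(\Lambda)^2\delta_{\mathbf{1}}$ supplies the lower bound via (W2); the technical obstacle you flag (extending the pairing from $C_c^\infty$ to the Schwartz-like space while preserving the sign condition) is exactly what the axioms of Definition~\ref{d:Schwartz-like} are designed to handle, via the cutoff sequence $g_nf\to f$ with $g_n(e)=1$. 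One small correction of attribution: the invariant pointwise ergodic theorem enters only to identify $\triangle(r,X)$ with $\triangle_{\mathrm{prob}}(r,X)$ (Corollary~\ref{c:equiv_defs_optimal_density}), whereas the atom of mass $i(\Lambda)^2$ at $\mathbf{1}$ comes from the uniqueness in the Plancherel--Godement theorem applied to $\eta_\Lambda^+=\eta_\Lambda+i(\Lambda)^2m_G$, not from spatial averaging.
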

Here, $\mathbf{1}$ denotes the trivial character on $G$ as before. Both Cohn and Elkies linear programming bound and and Theorem \ref{t:mainthm1} are special cases: 
\begin{itemize}
\item For the LP bound by Cohn and Elkie we choose $G=\R^n$, $K=\{0\}$ and let $d$ be the usual Euclidean metric. As $\mathcal{S}(G, K)$ we choose the ordinary Schwartz space $\mathscr{S}(\R^n)$. The availability of an invariant pointwise ergodic theorem follows directly from the ergodic theory of amenable groups. Hence $(\R^n, \{0\}, d, \mathscr{S}(\R^n))$ is a convenient Gelfand pair. Now Theorem \ref{t:mainthm2} recovers their bound up to a difference in function spaces. 
\item For the hyperbolic bound we choose $G=\SO(n,1)$, $K= \SO(n)$ and let $d$ denote the usual hyperbolic metric on $G/K$. As $\mathcal{S}(G, K)$ we choose the radial Harish-Chandra $L^1$-Schwartz space on $G$, which is given by $\cosh^{-n+1}\mathscr{S}(\R)$ in radial coordinates (i.e. using the Cartan decomposition $G=KA_+K$). The availability of an invariant pointwise ergodic theorem follows directly from the work of Gorodnik and Nevo, \cite{GorodnikNevo}, on the ergodic theory of semisimple Lie groups. Now Theorem \ref{t:mainthm2} applied to convenient Gelfand pair $(\SO(n,1), SO(n), d, \mathcal{S}(G, K))$ implies Theorem \ref{t:mainthm1}.
\end{itemize}

\subsection{Riemannian symmetric spaces}
Our results concerning hyperbolic spaces admit the following natural generalization to Riemannian symmetric spaces: 

If $(X,d)$ is an irreducible Riemannian symmetric space of noncompact type, there is a simple Lie group $G$ with finite center and no compact factors such that for any maximal compact subgroup $K$ of $G$ the space $X$ can be identified with $G/K$.
In this case a possible choice of Schwartz-like space is given by the set $\mathscr{S}^1(G,K)$ of bi-$K$-invariant Harish-Chandra $L^1$-Schwartz functions on $K$. 
The work of Gorodnik and Nevo, \cite{GorodnikNevo}, provides an invariant pointwise ergodic theorem for $(G, K, d)$. Hence $(G, K, d, \mathscr{S}^1(G,K))$ is a convenient Gelfand pair to which Theorem \ref{t:mainthm2} applies.

If $X$ is a rank $1$ symmetric space of noncompact type, \ref{t:mainthm2} implies bounds which are similar to Theorem \ref{t:mainthm1}.
In contrast, if $X$ is a higher rank irreducible symmetric space of noncompact type, it becomes quite difficult to write down Condition (W2) explicitly. By considering the radial part of the Laplace operator on $X$, it is possible to relate spherical functions to eigenfunctions problems of certain differential–difference operators on Euclidean spaces. Natural generalizations of these issues and related special functions can be found in the study of Dunkl theory. 

If $(X,d)$ is Riemannian symmetric space of compact type, there is a compact simple Lie group $G$ and a compact subgroup $K$ such that $X=G/K$. In this case the natural choice for $\mathcal{S}(G, K)$ is given by $C^\infty(G, K)$, the set of smooth bi-$K$-invariant functions on $G$. As compact groups are amenable, the necessary ergodic theorems follow directly from the ergodic theory of amenable groups. Hence in this case $(G, K, d, C^\infty(G, K))$ is a convenient Gelfand pair. Note that the compact Riemannian two-point homogeneous spaces are exactly the compact Riemannian symmetric spaces of rank $1$. Hence Theorem \ref{t:mainthm2} implies a variant of the well-known linear programming bounds for codes in compact two-point homogeneous spaces.

\subsection{Heisenberg groups}
	The $(2n+1)$-dimensional Heisenberg group $\mathcal{H}_n$ is given by the set $\mathcal{H}_n+\R\times C^n$ with the composition $\ast$ defined by
	\[
	(t,x)\ast (s, y)\coloneqq (t+s -\frac{1}{2}\mathrm{Im}\langle x,y \rangle,x+y).
	\]
	We equip $\mathcal{H}_n$ with the Cygan-Koranyi metric $d_{CK}$, which can be obtained by defining the Cygan-Koranyi group norm
	$\norm{(t,x)}\coloneqq (t^2+\norm{x}_2^4)^{1/2}.$
	$\mathcal{H}_n$ can be considered as the homogeneous space of the Gelfand paair $(\mathcal{H}_n\rtimes U(n), U(n))$.
	The availability of an invariant pointwise ergodic theorem for $\mathcal{H}_n\rtimes U(n)$ follows directly from the ergodic theory of amenable groups and as a Schwartz-like function space we can choose the set of radial Schwartz functions $\mathscr{S}(\mathcal{H}_n\rtimes U(n), U(n))$ on $\mathcal{H}_n\rtimes U(n)$. Then $(\mathcal{H}_n\rtimes U(n), U(n), d_{CK}, \mathscr{S}(\mathcal{H}_n\rtimes U(n), U(n)))$ is a convenient Gelfand pair.
	
	We obtain the following explicit formulas by combining Theorem \ref{t:mainthm2} with Theorem \ref{t:parametrization_spherical_heisenberg} and rewriting everything in radial coordinates.
	Assume that $h\in \mathscr{S}(\R\times \R)$ is even in the second coordinate and satisfies
	\begin{enumerate}[(i)]
		\item $h(t,s)\leq 0$ if $(t^2+s^4)^{1/2}\geq 2r$,
		\item for all $\lambda>0$ and $m\in \N$ we have 
		\[\int_\R\int_0^\infty h(t,s)e^{\pm i\lambda t}L_{m}^{(n-1)}\left(\frac{1}{2}\lambda s^2\right)e^{-\frac{1}{4}\lambda s^2}dsdt\geq 0,\]
		where $L_m^{(n-1)}$ denotes the generalized Laguerre polynomial of order $n-1$ normalized to $1$ at $0$,
		\item for all $\tau>0$ we have
		\[\int_\R\int_0^\infty h(t,s)\frac{2^{n-1}(n-1)!}{(\tau s)^{n-1}}J_{n-1}(\tau s)dsdt\geq 0,\]
		where $J_{n-1}$ denotes the Bessel function of order $n-1$,
		\item $\int_\R\int_0^\infty h(t,s)dsdt>0$.
	\end{enumerate}
	Then
	\[\triangle(r,\mathcal{H}_n) \leq C_nr^{2d+2}\frac{\Gamma(n)}{2\pi^n}\frac{h(0,0)}{\int_\R\int_0^\infty h(t,s)dtds},\]
	where $C_n$ is the volume of the ball $B(0,1)$ in the Cygan-Koranyi metric.

\subsection{Methods and proof}

Since periodic approximation is not known to hold in the generality of Theorem \ref{t:mainthm1} (and certainly not in the generality of Theorem \ref{t:mainthm2}), our approach does not work by reduction to the periodic case. Instead we reduce to the study of random invariant sphere packings, following an idea by Bowen and Radin. Our strategy is to apply recent advances in stochastic geometry on homogeneous spaces to Bowen and Radin's notion of density of random invariant sphere packings in order to prove Theorem \ref{t:mainthm2}.

More precisely, let $\UD_{2r}(X)$ denote the set of subsets $P\subset X$ such that $d(x,y)\geq 2r$ for all $x\neq y\in P$. This set can be equipped with a locally compact second countable Hausdorff metric and the canonical $G$-action on $\UD_{2r}(X)$ is continuous with respect to this metric. Let $(\Omega,\mathbb{P})$ denote a probability space equipped with a probability measure preserving $G$-action and let $\Lambda:(\Omega,\mathbb{P})\to \UD_{2r}(X)$ be a stationary point process, i.e.\ a $G$-equivariant measurable map. Then $\Lambda$ is the (random) set of centers of some random invariant $r$-sphere packing $\Lambda^r$, and the Bowen-Radin density of $\Lambda^r$ is defined as \[D(\Lambda^r) = \mathbb{P}(x_0\in \Lambda^r).\] This is related to the intensity $i(\Lambda)$ of $\Lambda$ by the formula $i(\Lambda)= \frac{D(\Lambda^r)}{m_X(B(x_0,r))}$, and coincides with the density of a generic instance of $\Lambda$. Since generically measured sphere packings are precisely the generic instances of such point processes, the proof of our main theorem thus reduces to estimating intensities of certain point processes.

By recent work of Björklund and Byléhn in \cite{MichaelMattiasArXiv}, building on work by Björklund, Hartnick and Pogorzelski in \cite{BHP3} and Björklund and Hartnick in \cite{BjorklundHartnickHyperuniformity}, we can associate two positive-definite Radon measures $\eta_\Lambda^+$ and $\eta_\Lambda\coloneqq \eta_\Lambda^+-i(\Lambda)^2m_G$ on $G$ to $\Lambda$.
As these measure are positive-definite, they have positive spherical transforms $\widehat{\eta}_\Lambda^+$ and $\widehat{\eta}_\Lambda$. Hence, if $f$ is a nice function on $G$ with $\widehat{f}\geq 0$, we have 
\[\widehat{\eta}_\Lambda^+(\widehat{f}) = \widehat{\eta}_\Lambda(\widehat{f})+i(\Lambda)^2\widehat{m}_G(\widehat{f})\geq i(\Lambda)^2\delta_{\mathbf{1}}(\widehat{f}).\]
By \cite{MichaelMattiasArXiv} the measure $\eta_\Lambda^+$ is given by the averaged summation formula
\[\eta_\Lambda^+(f)=\mathbb{E}\left[\sum_{x\in \Lambda}\sum_{y\in \Lambda}f(\sigma(x)^{-1}\sigma(y))b(x)\right],\]
where $b:X\to \R$ is a non-negative bounded measurable function with compact support and $m_x(b)=1$ and $\sigma:X\to G$ is a Borel section of the quotient map $G\to X$.
These two facts allow us to replace the use of the Poisson summation formula in proof of the Euclidean linear programming bound by Cohn and Elkies in \cite{CohnElkies2003} with the formula
\begin{align}\label{e:Poisson_replacement}\eta_\Lambda^+(f) = \widehat{\eta}^+_\Lambda(\widehat{f}),\end{align}
obtained from the Plancherel--Godement theorem.

While this argument establishes our main theorems for some class of witness functions, the resulting function space is quite inconvenient. It it thus important to enlarge the function space in question to a suitable Schwartz-like space. Here the following major technical problem arises:
While the measure $\eta_\Lambda^+$ induces a positive-definite distribution $T_\Lambda^+$ which in our setting can be extended to a ``tempered'' distribution (in the dual of our Schwartz-like space), this tempered distribution is no longer given by integration. Thus, to establish our main theorem, we need to approximate Schwartz functions $f$ by smooth compactly supported functions such that the approximants have the same sign change behaviour as $f$.

\subsection{Organization of the article}
In Section \ref{s:harmonic_analysis} we review the necessary background on harmonic analysis of functions, measures and distributions on Gelfand pairs. We define the notion of a Schwartz-like function space and recall detailed information about each of the special classes of Gelfand pairs we consider in this article.

In Section \ref{s:PointProcesses} we cover the necessary prerequisites in stochastic geometry on homogeneous spaces.

In Section \ref{s:deterministic_and_random_density} we recall Bowen and Radin's definition of packing density and explain its relation to the density of deterministic packings. We reformulate Bowen and Radin's definition in a way that relates it to general concepts in the theory of point processes.

In Section \ref{s:bounds} we prove our main result Theorem \ref{t:mainthm2}.

In Appendix \ref{S:Bochner-Schwarz appendix}, we prove a Bochner-Schwartz theorem for spherical distributions on the Heisenberg group.

\section{Background in spherical harmonic analysis}\label{s:harmonic_analysis}

In this section we recall background material about the spherical harmonic analysis of functions, measures and distributions on Gelfand pairs. We cover several examples of classes of Gelfand pairs in detail and define the notion of a Schwartz-like space of functions.
We will also use the opportunity to fix notation and to make some standing assumptions.
\subsection{General setting}\label{s:general assumptions}

Throughout this article, $G$ denotes an lcsc group and $K\subset G$ a compact subgroup. We fix a left-Haar measure $m_G$ on $G$ and by $m_K$ the Haar measure on $K$, normalized such that $m_K(K)=1$. 
For $g\in G$ we let $L_g$ denote the left-multiplication by $G$ and $R_g$ the right-multiplication by $g^{-1}$.
For any function $f:G\to \C$ we denote by $\check{f}$ the function defined by $\check{f}(g) = f(g^{-1})$.
If $Y$ is any topological space, we will denote its Borel $\sigma$-algebra by $\mathcal{B}(Y)$.

In the following we fix a complete proper $G$-invariant metric $d$ on $X := G/K$, inducing the topology on $X$, and write $x_0\coloneqq eK\in X$. We denote by $\pi$ the quotient map $\pi:G\to G/K,\ g\mapsto gx_0 = gK$.
We set
\[
C_c(G, K)\coloneqq \{f\in C_c(G) : f(k_1gk_2) = f(g)\text{ for all }k_1,k_2\in K\},
\]
the set of all complex valued bi-$K$-invariant compactly supported continuous functions.
This is a $*$-algebra with multiplication given by convolution 
\[f_1*f_2(g) = \int f_1(h)f_2(h^{-1}g)dm_G(h)\]
and involution given by $f^*(g)\coloneqq \overline{f(g^{-1})}$. 
We denote by $L^1(G, K)$ its closure in $L^1(G)$. 

We say that $(G,K)$ is a \emph{Gelfand pair} if $C_c(G,K)$ (or equivalently $L^1(G, K)$) is commutative.

In the rest of this article we will always assume that $(G,K)$ is a Gelfand pair, if not specified otherwise, and use the notation described above. We will indicate further whenever we assume that $G$ is a Lie group.

If $(G, K)$ is a Gelfand pair, then $G$ is unimodular and thus there is a unique $G$-invariant Borel measure $m_X$ on $X\coloneqq G/K$ such that the Weil desintegration formula
\[\int_G f(g)dm_G(g) = \int_X\int_Kf(gk)dm_K(k)dm_X(gK)\]
holds for all $f\in C_c(G)$.
More generally, if $\mu$ is a $G$-invariant Borel measure on $X$, there exists some constant $C\geq0$ such that $\mu = Cm_X$.

For $f\in C_c(G)$ the function $f^\natural:G\to \C$, defined by
\[f^\natural(g)\coloneqq \int_K\int_K f(k_1gk_2)dm_G(g)\]
is in $C_c(G, K)$ and is called the \emph{$K$-periodization} of $f$.

If $f:G\to \C$ is a bi-$K$-invariant function on $G$, then there is a $K$-invariant function $f_K$ on $G/K$ given by $f_K(gK) = f(g)$.
If $h:G/K\to \C$ is a $K$-invariant function, then there is a bi-$K$-invariant function $h^K$ on $G$ given by $h^K(g) = h(gK).$
Note that $(f_K)^K=f$ and $(h^K)_K = h$. $f\mapsto f_K$ and $h\mapsto h^K$ send continuous functions to continuous functions, functions with compact support to functions with compact support and smooth functions to smooth functions (as the quotient map admits smooth local sections). 
For any $K$-invariant function $h:G/K\to \C$ such that $h^K\in L^1(G, K)$, we define $\widehat{h}=\widehat{h^K}$.	

\subsection{The spherical transform}
\subsubsection{The spherical transform of functions}
We begin by reviewing the spherical transform of functions on Gelfand pairs. The following is well known and can be found in the books \cite{Dieudonne}, \cite{VaradarajanGangolli}, \cite{vanDijk} and \cite{Wolf}. 

Denote by $C(G, K)$ the set of bi-$K$-invariant continuous functions on $G$. 
The convolution algebra $L^1(G, K)$ equipped with the $L^1$-norm is a commutative Banach algebra with involution. 
Each character $\phi:L^1(G, K)\to \C$ is of the form
\begin{equation}\label{e:character_spherical_function}
\phi(f)= \int f(g)\omega(g^{-1})dm_G(g),
\end{equation}
for some bounded $\omega\in C(G, K)$ satisfying
\begin{equation}\label{e:Godement_characterization_spherical_functions}
	\int_K \omega(g_1kg_2)dm_K(k) = \omega(g_1)\omega(g_2)
\end{equation}
for all $g_1,g_2\in G$.
If $\omega\in C(G, K)$ is bounded and satisfies equation \eqref{e:Godement_characterization_spherical_functions}, then it induces a character $\phi_\omega$ of the commutative Banach algebra $L^1(G, K)$ by formula \eqref{e:character_spherical_function} and is called a bounded spherical function.
We denote the set of bounded spherical functions of $(G, K)$ by $BS(G, K)$ define
\begin{equation*}\label{e:gelfand_transform_of_function}
G(f):BS(G, K)\to \C,\quad \omega \mapsto \phi_\omega(f)
\end{equation*}
for $f\in L^1(G, K)$.
Note that this is just the Gelfand transform of $f$, rewritten by parametrizing the Gelfand spectrum of $L^1(G, K)$ by $BS(G, K)$.
We equip $BS(G, K)$ with the weak topology induced by the family $\{G(f)\}_{f\in L^1(G, K)}$ of maps.
Then $BS(G,K)$ is a locally-compact Hausdorff space
and we have a map
\[
G:L^1(G,K)\to C_0(BS(G, K)), \quad f\mapsto G(f).
\]
We denote by $PS(G, K)$ the set of positive-definite functions in $BS(G, K)$, i.e. the set of functions $\phi\in BS(G, K)$
such that
\[\int\int \phi(h^{-1}g)\overline{f(h)}f(g)dm_G(h)dm_G(g)\geq 0\]
for all $f\in L^1(G)$.
\begin{definition}
	We define the \emph{spherical transform} of $f\in L^1(G, K)$ by  
	\[
	\widehat{f}\coloneqq G(f)|_{PS(G, K)}:PS(G,K)\to \C.
	\]
\end{definition}

Note that the spherical transform satisfies $\widehat{f^*} = \overline{\widehat{f}\hspace{3pt}}$ for all $f\in L^1(G, K)$.
\subsubsection{The spherical transform of measures}

\begin{definition}
	Let $\mu$ be a Borel measure on $G$. 
	\begin{enumerate}[(i)]
		\item $\mu$ is called \emph{positive-definite} if 
		\[\mu(f^**f)\geq 0\quad\text{for all $f\in C_c(G)$.}\]
		\item $\mu$ is called \emph{$K$-spherical} if $\mu(f) = \mu(f^\natural)$ for all $f\in C_c(G)$.
	\end{enumerate}
\end{definition}
Positive-definite measures are important because they admit spherical transforms by the Plancherel-Godement theorem:
\begin{theorem}[Godement, \cite{Godement}]\label{t:Godement-Plancherel_for_measures}
	Let $\mu$ be a positive-definite Borel measure on $G$. Then there is a unique regular Borel measure $\widehat{\mu}$ on $PS(G, K)$ such that
	\begin{enumerate}[(i)]
		\item $\widehat{f}\in L^2(\widehat{\mu})$ for all $f\in C_c(G,K)$,
		\item $\mu(f*g^*) = \widehat{\mu}(\widehat{f}\cdot\overline{\widehat{g}}\hspace{2pt})$ for all $f,g\in C_c(G, K)$.
	\end{enumerate}
\end{theorem}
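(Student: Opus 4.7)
The plan is a GNS-style construction followed by the Gelfand--Naimark spectral theorem, essentially the classical strategy of Godement. Since $\mu$ is positive-definite, the sesquilinear form $B(f,g) := \mu(f*g^*)$ on $A := C_c(G,K)$ is Hermitian and positive semi-definite; quotienting by its null space and completing produces a Hilbert space $\mathcal{H}$ together with a canonical map $A \to \mathcal{H}$, $f \mapsto [f]$. For each $f \in A$, left-convolution $L_f \colon g \mapsto f*g$ preserves the null space of $B$ and extends to a bounded operator on $\mathcal{H}$, the norm bound coming from a Schur-type estimate exploiting positive-definiteness. Since $A$ is commutative and closed under the involution $f \mapsto f^*$, the norm closure $\mathcal{A} \subset \mathcal{B}(\mathcal{H})$ of $\{L_f : f \in A\}$ is a commutative $C^*$-algebra.

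By Gelfand--Naimark, $\mathcal{A} \cong C_0(\widehat{\mathcal{A}})$, and every character of $\mathcal{A}$ pulls back along $f \mapsto L_f$ to a bounded $*$-character $\chi$ of $A$. Extending $\chi$ to $L^1(G,K)$ and invoking \eqref{e:character_spherical_function}, we obtain a bounded spherical function $\omega$ with $\chi(f) = \int f(g)\omega(g^{-1})\,dm_G(g)$; positivity of $\chi$ as a state on $\mathcal{A}$ forces $\omega \in PS(G,K)$, yielding a continuous injection $\iota \colon \widehat{\mathcal{A}} \hookrightarrow PS(G,K)$. Choosing a bi-$K$-invariant approximate identity $\{e_\alpha\} \subset A$ (obtained by $K\times K$-averaging a standard approximate identity), the vector states $\varphi_\alpha(T) := \langle T[e_\alpha],[e_\alpha]\rangle_{\mathcal{H}}$ are positive functionals on $\mathcal{A}$. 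A weak-$*$ accumulation point together with the Riesz representation theorem produces a regular Borel measure on $\widehat{\mathcal{A}}$; pushing it forward by $\iota$ and extending by zero to $PS(G,K)$ yields the desired $\widehat{\mu}$. Unwinding definitions gives the Plancherel identity
\[
\mu(f*g^*) = B(f,g) = \langle [f],[g]\rangle_{\mathcal{H}} = \int_{PS(G,K)} \widehat{f}\cdot\overline{\widehat{g}}\,d\widehat{\mu},
\]
which is (ii), and (i) follows at once by taking $g=f$. Uniqueness of $\widehat{\mu}$ follows from a Stone--Weierstrass argument: the $*$-algebra generated by $\{\widehat{f}\overline{\widehat{g}} : f,g \in C_c(G,K)\}$ is self-adjoint, separates points of $PS(G,K)$, and lies densely in $C_0(PS(G,K))$.

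The main obstacle I anticipate is the identification step $\widehat{\mathcal{A}} \hookrightarrow PS(G,K)$: one must transport the positivity of $\mu$ carefully through the GNS construction to conclude that all spherical functions parametrizing characters of $\mathcal{A}$ are genuinely positive-definite (not merely bounded), and then verify that $\iota$ is a Borel-measurable embedding compatible with the weak topology on $PS(G,K)$, so that pushforward produces a Borel regular measure. A secondary technical point is constructing a bi-$K$-invariant approximate identity whose associated vector states encode $\mu$ faithfully on all of $C_c(G,K)$; this uses the bound $\|f*e_\alpha - f\|_1 \to 0$ combined with positivity of $B$, and is standard but requires a careful approximation argument when $G$ is not discrete.
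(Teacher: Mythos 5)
The paper does not prove this theorem itself; it cites Dieudonn\'e and Barker for the Plancherel--Godement theorem, so your GNS plus Gelfand--Naimark strategy is being compared against the classical argument rather than against anything in the text. Your overall architecture is the right one, but the step where you actually produce the measure $\widehat{\mu}$ fails as written. You take the vector states $\varphi_\alpha(T)=\langle T[e_\alpha],[e_\alpha]\rangle_{\mathcal H}$ at a bi-$K$-invariant approximate identity and pass to a weak-$*$ accumulation point, then invoke Riesz on $C_0(\widehat{\mathcal A})$. But $\norm{\varphi_\alpha}=\norm{[e_\alpha]}^2_{\mathcal H}=\mu(e_\alpha*e_\alpha^*)$, and this diverges whenever $\widehat{\mu}$ is an infinite measure --- which is the generic and relevant case (e.g.\ for $\mu=\eta^+_\Lambda$ of a lattice orbit, or already for the Dirac comb on $\R$, where $\mu(e_\alpha*e_\alpha^*)\to\infty$). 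An unbounded net has no weak-$*$ accumulation point in $\mathcal A^*$, and the limiting functional $L_f\mapsto\mu(f)$ is genuinely unbounded on $\mathcal A\cong C_0(\widehat{\mathcal A})$ (its norm would be the total mass of $\widehat\mu$), so the Riesz theorem for bounded functionals cannot be applied. The classical repair is to use, for each fixed $f\in C_c(G,K)$, the \emph{bounded} positive functional $T\mapsto\langle T[f],[f]\rangle$, represent it by a finite regular measure $\nu_f$ on $\widehat{\mathcal A}$, verify the compatibility $\abs{\widehat g}^2\,d\nu_f=\abs{\widehat f}^2\,d\nu_g$ (both integrate $\widehat h$ to $\mu(h*f*f^**g*g^*)$), and patch the locally defined measures $\abs{\widehat f}^{-2}\nu_f$ on the open cover $\{\widehat f\neq 0\}$ into a single Radon measure $\widehat\mu$. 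The Plancherel identity and property (i) then come from $\nu_f(\widehat{\mathcal A})=\mu(f*f^*)$ and polarization. Without this (or the equivalent spectral-measure argument), your final display is unjustified for infinite $\widehat\mu$.

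Two further points need strengthening. First, the boundedness of $L_f$ on $\mathcal H$ is the other genuinely hard step for positive-definite \emph{measures} (as opposed to positive-definite functions): the iterated Cauchy--Schwarz estimate $B(f*h,f*h)\le B((f^**f)^{*2^n}*h,h)^{1/2^n}B(h,h)^{1-1/2^n}$ only closes if one controls $\mu$ on the growing supports $(\supp f)^{2^n}\cdot\supp(h*h^*)$, which requires first proving that positive-definite measures are translation bounded and then a covering estimate; ``a Schur-type estimate'' does not identify this. Second, your uniqueness argument via Stone--Weierstrass is incomplete for the same reason as the existence step: two \emph{infinite} Radon measures can agree on a uniformly dense subspace of $C_0$ without being equal. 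One must again localize, noting that the finite measures $\abs{\widehat f}^2\,d\widehat\mu_1$ and $\abs{\widehat f}^2\,d\widehat\mu_2$ agree on the $\widehat h$ and hence coincide, and then let $f$ vary. The identification of characters of $\mathcal A$ with elements of $PS(G,K)$, which you flag yourself, is indeed a real lemma (positivity of a character of $L^1(G,K)$ implies positive-definiteness of the corresponding bounded spherical function on all of $G$), but it is standard and your instinct about where the difficulty lies there is correct.
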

A detailed proof of the Plancherel-Godement theorem can be found in \cite{Dieudonne}. A simple proof of a version for distributions was given by Barker in \cite{Barker} and can be modified to yield this version for measures. Motivated by the theorem above, we will write $C_c(G,K)^2$ for the complex linear span of $\{f*g\mid f,g\in C_c(G, K)\}$.
\subsubsection{The spherical transform of distributions} 
In order to prove obtain estimates involving Schwartz functions, we will take advantage of the Lie group structure of our Gelfand pairs by using a version of the spherical transform for distributions.

For a manifold $M$ let $D(M)$ denote the algebra of differential operators on $M$, i.e. the algebra generated by the derivations of $C^\infty(M)$ and the maps $D_f:C^\infty(M)\to C^\infty(M),\ g\mapsto fg$ for $f\in C^\infty(M)$.
\begin{definition}\label{d:distributions}
	Let $M$ be a manifold. A \emph{distribution} $T$ on $M$ is a linear map $T:C_c^\infty(M)\to \C$ such that for every open, relatively-compact $O\subset M$ there are finitely many $D_1,\dots, D_k\in D(M)$ such that
	\[\abs{T[f]}\leq \sum_{i=1}^k \norm{D_if}_\infty\quad\text{for all $f\in C_c^\infty(M)$ with $\mathrm{supp}(f)\subset O$.}\]
\end{definition}
A more detailed discussion of distributions on manifolds and Lie groups can be found in \cite{Helgason2} and in the appendix of \cite{WarnerSemmisimple1}.
\begin{lemma}\label{l:lf_measure_are_distributions}
	Assume that $G$ is a Lie group and let $\mu$ be a Randon measure on $G$. Then the map 
	\[C_c^\infty(G)\to \C,\quad f\mapsto \mu(f)\]
	is a distribution on $G$.
\end{lemma}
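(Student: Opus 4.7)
The plan is to unpack the definition of a distribution and observe that the bound we need follows immediately from the defining property of a Radon measure (local finiteness), without needing any derivatives at all: the map will turn out to be a distribution of order zero.

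First I would fix an open, relatively compact subset $O \subset G$. Its closure $\overline{O}$ is compact, so local finiteness of the Radon measure $\mu$ gives $C := \mu(\overline{O}) < \infty$. Then for any $f \in C_c^\infty(G)$ with $\supp(f) \subset O$, the elementary estimate
\[
\abs{\mu(f)} \,\leq\, \int_{\overline{O}} \abs{f(g)} \, dm_\mu(g) \,\leq\, \norm{f}_\infty \cdot C
\]
holds. It only remains to rewrite $\norm{f}_\infty \cdot C$ in the form $\sum_{i=1}^k \norm{D_i f}_\infty$ for some differential operators $D_i \in D(G)$, as demanded by Definition \ref{d:distributions}.

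There are two clean ways to do this. The slickest is to take $k=1$ and $D_1 = D_{C\cdot \mathbf{1}_G}$, i.e.\ the zeroth-order differential operator given by multiplication by the constant function $C$ on $G$. Since constant functions are smooth, this operator lies in $D(G)$, and clearly $\norm{D_1 f}_\infty = C\norm{f}_\infty \geq \abs{\mu(f)}$. Alternatively, one may choose any integer $k \geq C$ and set $D_1 = \cdots = D_k = \operatorname{id} = D_{\mathbf{1}_G}$, so that $\sum_i \norm{D_i f}_\infty = k\norm{f}_\infty \geq C\norm{f}_\infty \geq \abs{\mu(f)}$.

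There is no genuine obstacle in this argument. The only thing to verify carefully is that multiplication by a (constant) smooth function is indeed an element of $D(G)$ according to the paper's convention, which is immediate from the definition of $D(M)$ as the algebra generated by derivations and multiplication operators $D_f$ for $f\in C^\infty(M)$. Everything else, including the finiteness $\mu(\overline{O})<\infty$, is just the standing meaning of "Radon" (locally finite Borel measure). In particular, the resulting distribution has order zero, which is the strongest form of the statement one could hope for.
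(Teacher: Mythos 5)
Your proof is correct and follows essentially the same route as the paper's: bound $\abs{\mu(f)}$ by $\mu(\overline{O})\norm{f}_\infty$ using local finiteness of the Radon measure on the relatively compact set $O$. You are in fact slightly more careful than the paper in explicitly realizing the constant $\mu(\overline{O})\norm{f}_\infty$ in the required form $\sum_i \norm{D_i f}_\infty$ via the zeroth-order multiplication operator, a step the paper's proof leaves implicit.
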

\begin{proof}
	Let $C\subset M$ be open and relatively-compact and let $f\in C_c^\infty(M)$ with $\mathrm{supp}(f)\subset C$. Then
	\[\abs{\int fd\mu}\leq \mu(C)\norm{f}_\infty\leq \mu(\overline{C})\norm{f}_\infty.\]
\end{proof}
\begin{definition}
	Assume that $G$ is a Lie group. 
	\begin{enumerate}[(i)]
		\item A distribution $T$ on $G$ is called \emph{positive-definite}, if 
		\[T[f^**f]\geq 0\quad\text{ for all $f\in C_c^\infty(G)$.}\]
		\item A distribution $T$ on $G$ is called $\emph{$K$-spherical}$, if
		$T[f] = T[f\circ L_k] = T[f\circ R_k]$ for all $k\in K$ and $f\in C_c^\infty(G)$.
	\end{enumerate}
\end{definition}
\begin{theorem}[Godement, \cite{Godement}]\label{t:Godement-Plancherel_for_distributions}
	Assume that $G$ is a Lie group. Let $T$ be a positive-definite distribution on $G$. Then there is a unique regular Borel measure $\widehat{T}$ on $PS(G, K)$ such that
	\begin{enumerate}[(i)]
		\item $\widehat{f}\in L^2(\hspace{2pt}\widehat{T}\hspace{2pt})$ for all $f\in C_c^\infty(G,K)$,
		\item $T[f*g^*] = \widehat{T}(\widehat{f}\cdot\overline{\widehat{g}}\hspace{2pt})$ for all $f,g\in C_c^\infty(G, K)$.
	\end{enumerate}
\end{theorem}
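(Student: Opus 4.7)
The approach I would take is to reduce the distribution case to the already-established measure case (Theorem~\ref{t:Godement-Plancherel_for_measures}) via smooth convolutional regularization, following the strategy of Barker. The key observation is that although $T$ is not represented by a measure in general, its two-sided convolution with a compactly supported smooth bi-$K$-invariant function yields a bounded linear functional on $C_c(G)$, i.e., a Radon measure, which inherits positive-definiteness from $T$.

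\textbf{Step 1 (reduction to $K$-spherical).} I would first replace $T$ by its bi-$K$-average $T^\natural[f] := \int_{K\times K} T[f \circ L_{k_1} \circ R_{k_2}]\,dm_K(k_1)dm_K(k_2)$. A straightforward verification shows that $T^\natural$ is again a positive-definite distribution, is $K$-spherical, and agrees with $T$ on all $f*g^*$ with $f,g \in C_c^\infty(G,K)$, so one may assume $T$ is itself $K$-spherical.

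\textbf{Step 2 (regularization).} For each $\psi \in C_c^\infty(G,K)$, define a linear functional on $C_c(G)$ by $\mu_\psi(\varphi) := T[\psi^* * \varphi * \psi]$. Since $\psi^* * \varphi * \psi \in C_c^\infty(G)$ for $\varphi \in C_c(G)$, this is well-defined. Positive-definiteness of $T$ applied to $(\varphi*\psi)^**(\varphi*\psi)$ yields $\mu_\psi(\varphi^**\varphi) \geq 0$, so $\mu_\psi$ extends to a positive-definite, $K$-spherical Radon measure on $G$. Applying Theorem~\ref{t:Godement-Plancherel_for_measures} yields a regular Borel measure $\widehat{\mu_\psi}$ on $PS(G,K)$ satisfying $\widehat{\mu_\psi}(\widehat{f}\,\overline{\widehat{g}}) = T[\psi^* * f * g^* * \psi]$ for $f,g \in C_c^\infty(G,K)$.

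\textbf{Step 3 (gluing and identification).} Using commutativity of convolution on $C_c^\infty(G,K)$ (the Gelfand pair property) and the fact that the spherical transform intertwines convolution and multiplication, I would observe that formally $\widehat{\mu_\psi} = |\widehat{\psi}|^2\,\widehat{T}$, and accordingly define $\widehat{T}$ on the open set $U_\psi := \{\omega \in PS(G,K):\widehat{\psi}(\omega) \neq 0\}$ by $|\widehat{\psi}|^{-2}\,\widehat{\mu_\psi}$. Compatibility across choices is checked via the polarized identity $|\widehat{\psi_2}|^2\,\widehat{\mu_{\psi_1}} = |\widehat{\psi_1}|^2\,\widehat{\mu_{\psi_2}}$, both sides of which equal $T[(\psi_1*\psi_2)^**\varphi*(\psi_1*\psi_2)]$ when paired against a test function $\varphi$. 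The sets $U_\psi$ cover $PS(G,K)$ because each $\omega \in PS(G,K)$ is a nonzero character of $L^1(G,K)$, hence nonzero on some $\psi \in C_c^\infty(G,K)$. To conclude identity (ii), I would take an approximate identity $(\psi_n) \subset C_c^\infty(G,K)$ with $\widehat{\psi_n} \to 1$ in a controlled fashion: continuity of $T$ on smooth functions in a fixed compact support gives $T[\psi_n^* * \psi_n * f * g^*] \to T[f*g^*]$, while on the spectral side monotone convergence yields $\widehat{T}(|\widehat{\psi_n}|^2\,\widehat{f}\,\overline{\widehat{g}}) \to \widehat{T}(\widehat{f}\,\overline{\widehat{g}})$. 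Setting $f=g$ then simultaneously produces (ii) and $L^2$-membership $\widehat{f}\in L^2(\widehat{T})$.

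The main obstacle I expect is the gluing in Step~3 together with the limiting procedure: one must verify that the locally-defined measures $|\widehat{\psi}|^{-2}\widehat{\mu_\psi}$ assemble into a single genuine Radon measure on $PS(G,K)$, typically via a partition of unity subordinate to $\{U_\psi\}$, and one must carefully control the approximate identity $(\psi_n)$ so that $|\widehat{\psi_n}|^2 \nearrow 1$ in a way that justifies both the distributional convergence and the monotone convergence on the spectral side, without any a priori integrability of $\widehat{f}\,\overline{\widehat{g}}$ against $\widehat{T}$.
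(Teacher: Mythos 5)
The paper itself does not prove this theorem --- it is quoted from Barker's thesis \cite{BarkerThesis} --- so there is no internal proof to compare against. Your regularization-and-gluing strategy is the standard one (Schwartz's proof of the Bochner--Schwartz theorem transported to Gelfand pairs, which is essentially what Barker does), and the overall architecture of Steps 1--3 is sound. Two steps, however, are justified incorrectly as written. In Step 1, the positivity of $T^\natural$ does not follow by inspecting $(f^**f)\circ L_{k_1}\circ R_{k_2}$ termwise (that function is of the form $f_1^**f_2$ with $f_1\neq f_2$ in general); the correct argument is that $(f_1,f_2)\mapsto T[f_1^**f_2]$ is a positive semidefinite sesquilinear form, so its average over $K\times K$ equals $T[h^**h]\geq 0$ with $h=\int_K f\circ R_k\,dm_K(k)$. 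More seriously, in Step 2 positive-definiteness of the functional $\mu_\psi$ does \emph{not} by itself make it a Radon measure. What does is that $\varphi\mapsto \psi^**\varphi*\psi$ maps $C_c(G)$ continuously into $C_c^\infty(G)$ (all derivatives fall on $\psi$), so that $\mu_\psi$ is a continuous functional on $C_c(G)$; equivalently, $\mu_\psi$ is integration against the continuous function $y\mapsto T[\psi^**\delta_y*\psi]$ times $m_G$. Only after this identification can Theorem \ref{t:Godement-Plancherel_for_measures} be invoked.

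In Step 3, you cannot in general arrange $\abs{\widehat{\psi_n}}^2\nearrow 1$ monotonically, so monotone convergence is the wrong tool --- but it is also unnecessary. Choose $\psi_n\geq 0$ bi-$K$-invariant with $\int\psi_n\,dm_G=1$ and supports shrinking to $\{e\}$; then $\abs{\widehat{\psi_n}(\omega)}\leq \norm{\psi_n}_{L^1}=1$ because every $\omega\in PS(G,K)$ satisfies $\abs{\omega}\leq\omega(e)=1$, and $\widehat{\psi_n}\to 1$ pointwise on $PS(G,K)$. Applying Fatou's lemma to
\[
\widehat{T}\bigl(\abs{\widehat{\psi_n}}^2\,\abs{\widehat{f}\,}^2\bigr)=T[\psi_n^**\psi_n*f*f^*]\longrightarrow T[f*f^*]
\]
yields $\widehat{f}\in L^2(\widehat{T}\hspace{2pt})$, i.e.\ item (i); dominated convergence (dominating function $\abs{\widehat{f}\,}^2$) then gives (ii) for $f=g$, and polarization gives the general case. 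Finally, the gluing itself requires second countability of $PS(G,K)$ (which holds since $G$ is lcsc) to extract a countable subcover of $\{U_\psi\}$ and a subordinate partition of unity, and the uniqueness clause of Theorem \ref{t:Godement-Plancherel_for_measures} to check compatibility on overlaps. With these repairs your outline is a complete and correct proof.
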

A proof of the theorem can be found in \cite{BarkerThesis}.
An important special case occurs if the distribution is given by integration against a positive-definite Borel measure.
The uniqueness in the Plancherel-Godement theorems for measures and distributions now implies the following:
\begin{corollary}\label{c:distributional_plancherel-godement_is_measure_plancherel_godement}
	Assume that $G$ is a Lie group and $\mu$ is a positive-definite Radon measure. Then  $T_\mu=\mu|_{C_c^\infty(G)}$ is a distribution and
	\[\widehat{T_\mu} = \widehat{\mu}.\]
\end{corollary}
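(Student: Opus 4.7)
The plan is to combine Lemma \ref{l:lf_measure_are_distributions} with the uniqueness clauses of the two Plancherel--Godement theorems (Theorem \ref{t:Godement-Plancherel_for_measures} and Theorem \ref{t:Godement-Plancherel_for_distributions}).

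First, I would verify that $T_\mu$ is an object to which Theorem \ref{t:Godement-Plancherel_for_distributions} actually applies. Lemma \ref{l:lf_measure_are_distributions} immediately gives that $T_\mu : C_c^\infty(G) \to \C$ is a distribution on $G$. To check that it is positive-definite as a distribution, note that for any $f \in C_c^\infty(G)$ we have $f^* * f \in C_c(G)$ (compact support is preserved under convolution and involution), and so
\[
T_\mu[f^**f] = \mu(f^**f) \geq 0
\]
by the positive-definiteness of $\mu$ as a measure. Hence Theorem \ref{t:Godement-Plancherel_for_distributions} applies and yields a unique Borel measure $\widehat{T_\mu}$ on $PS(G,K)$ such that $\widehat{f}\in L^2(\widehat{T_\mu})$ and $T_\mu[f*g^*] = \widehat{T_\mu}(\widehat{f}\cdot\overline{\widehat{g}})$ for all $f,g \in C_c^\infty(G,K)$.

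Next, I would show that the measure $\widehat{\mu}$ coming from Theorem \ref{t:Godement-Plancherel_for_measures} satisfies exactly the same two defining properties as $\widehat{T_\mu}$. By Theorem \ref{t:Godement-Plancherel_for_measures}, $\widehat{f}\in L^2(\widehat{\mu})$ and $\mu(f*g^*) = \widehat{\mu}(\widehat{f}\cdot\overline{\widehat{g}})$ already hold for all $f,g \in C_c(G,K)$. Since $C_c^\infty(G,K)\subset C_c(G,K)$, both properties hold a fortiori for test functions in $C_c^\infty(G,K)$. Moreover, for such $f, g$ we have $T_\mu[f*g^*] = \mu(f*g^*)$ by definition of $T_\mu$. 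Thus $\widehat{\mu}$ also fulfills conditions (i) and (ii) of Theorem \ref{t:Godement-Plancherel_for_distributions} for $T_\mu$. By the uniqueness clause of that theorem, I can conclude $\widehat{T_\mu} = \widehat{\mu}$.

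The only place where there is something to worry about is the uniqueness step: the distributional Plancherel--Godement theorem asserts uniqueness only among measures $\nu$ that are determined by their pairings $\nu(\widehat{f}\cdot\overline{\widehat{g}})$ for $f,g\in C_c^\infty(G,K)$, a smaller class of test functions than in the measure version. But uniqueness within this smaller class is precisely what Theorem \ref{t:Godement-Plancherel_for_distributions} provides; once both $\widehat{T_\mu}$ and $\widehat{\mu}$ are known to satisfy the defining identities on $C_c^\infty(G,K)$, equality is automatic. Hence the corollary follows without additional work.
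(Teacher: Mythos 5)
Your argument is correct and is exactly the route the paper takes: it observes that $T_\mu$ is a positive-definite distribution (via Lemma \ref{l:lf_measure_are_distributions} and the inclusion $C_c^\infty(G)\subset C_c(G)$) and then identifies $\widehat{T_\mu}$ with $\widehat{\mu}$ by checking that $\widehat{\mu}$ satisfies conditions (i) and (ii) of Theorem \ref{t:Godement-Plancherel_for_distributions} and invoking its uniqueness clause. Your closing remark correctly pins down the only point requiring care, namely that uniqueness is asserted over the smaller test class $C_c^\infty(G,K)$, which is precisely what the distributional theorem provides.
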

\begin{remark}
	In all of our examples this distribution extends to a tempered distribution by appropriate versions of the Bochner-Schwartz theorem. This tempered distribution is no longer necessarily given by integration against $\mu$.
\end{remark}
\subsection{Examples}
\subsubsection{Euclidean space}
See \cite{vanDijk}, \cite{Wolf} or \cite{AnkerDunklTheory} for the material of this section.
\begin{enumerate}[(i)]
	\item Consider the pair $G=\R^n$, $K=\{0\}$. Then $X = G/K = \R^n$ and as convolution of functions in $L^1(G, K) = L^1(\R^n)$ is commutative, $(\R^n, \{0\})$ is a Gelfand pair. For each $\xi\in \R^n$, let 
	\[\phi_\xi:\R^n\to \C,\quad x\mapsto \exp(2\pi i\langle x, \xi\rangle).\]
	Then $PS(\R^n, \{0\}) = \{\phi_\xi\mid \xi\in \R^n\}$ and the spherical transform is the ordinary Fourier transform. 
	Let $\mathscr{S}(\R^n)$ denote the Schwartz space on $\R^n$.
	\begin{theorem}\label{t:euclidean_Bochner_Schwartz}
		If $T$ is a positive-definite distribution on $\R^n$, then $T$ extends uniquely to a tempered distribution $\tilde{T}$ and for all $f\in \mathscr{S}(\R^n)$ we have
		\[\tilde{T}f = \widehat{T}(\widehat{f}\hspace{1pt}).\]
	\end{theorem}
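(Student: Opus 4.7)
The plan is to apply Theorem \ref{t:Godement-Plancherel_for_distributions} (Plancherel-Godement) to the Gelfand pair $(\R^n,\{0\})$ in order to produce a Radon measure $\widehat{T}$ on $PS(\R^n,\{0\}) = \{\phi_\xi : \xi \in \R^n\} \cong \R^n$, to prove that $\widehat{T}$ has polynomial growth, and then to define $\tilde{T}(f) := \widehat{T}(\widehat{f})$ for $f \in \mathscr{S}(\R^n)$, verifying that this tempered distribution extends $T$ uniquely.

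The main obstacle is showing the polynomial growth of $\widehat{T}$; the key device is a modulation argument. I would fix a bump $\chi \in C_c^\infty(\R^n)$ with $|\widehat{\chi}|^2$ bounded below by a positive constant on a neighborhood of the origin and, for each $\xi \in \R^n$, set $\psi_\xi(x) := e^{2\pi i \langle x,\xi\rangle}\chi(x)$, so that $\widehat{\psi_\xi}(\eta) = \widehat{\chi}(\eta - \xi)$. The convolutions $\psi_\xi * \psi_\xi^*$ are all supported in a single compact set $Q$ independent of $\xi$, while every derivative $\partial^\alpha(\psi_\xi * \psi_\xi^*)$ grows only polynomially in $|\xi|$ because of the modulation factors. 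By Definition \ref{d:distributions} there exist $N \in \N$ and $C > 0$ such that $|T[\phi]| \leq C \sum_{|\alpha| \leq N}\|\partial^\alpha \phi\|_\infty$ for every $\phi \in C_c^\infty(\R^n)$ supported in $Q$. Combining these observations with Theorem \ref{t:Godement-Plancherel_for_distributions} yields
\[
\widehat{T}\bigl(|\widehat{\chi}(\cdot - \xi)|^2\bigr) \;=\; T[\psi_\xi * \psi_\xi^*] \;\leq\; C'(1+|\xi|)^{2N}
\]
for some $C' > 0$ independent of $\xi$. Since $|\widehat{\chi}|^2$ is bounded below near the origin, this forces $\widehat{T}(B(\xi,1))$ to grow at most polynomially in $|\xi|$, which in turn gives $(1+|\xi|^2)^{-M} \in L^1(\widehat{T})$ for sufficiently large $M$; in particular $\widehat{T}$ is a tempered measure.

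Once $\widehat{T}$ is tempered, the formula $\tilde{T}(f) := \widehat{T}(\widehat{f})$ defines a tempered distribution on $\R^n$, since both $f \mapsto \widehat{f}$ and integration against a tempered measure are continuous on $\mathscr{S}(\R^n)$. Plancherel-Godement itself gives $T[f * g^*] = \widehat{T}(\widehat{f}\cdot \overline{\widehat{g}}) = \tilde{T}(f * g^*)$ for all $f,g \in C_c^\infty(\R^n)$, so $T$ and $\tilde{T}$ agree on the linear span $C_c^\infty(\R^n)^2$. For arbitrary $\phi \in C_c^\infty(\R^n)$ I would take a smooth symmetric approximate identity of the form $\psi_\epsilon = \chi_\epsilon * \chi_\epsilon^*$, so that $\phi * \psi_\epsilon = (\phi * \chi_\epsilon) * \chi_\epsilon^* \in C_c^\infty(\R^n)^2$; this sequence converges to $\phi$ in the $C^\infty$ topology with uniformly compact supports, hence in $\mathscr{S}(\R^n)$, so continuity of both $T$ (as a distribution) and $\tilde{T}$ (as a tempered distribution) forces $T[\phi] = \tilde{T}[\phi]$. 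Uniqueness of the tempered extension then follows from the density of $C_c^\infty(\R^n)$ in $\mathscr{S}(\R^n)$.
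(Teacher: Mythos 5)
Your proof is correct. Note that the paper does not actually prove Theorem \ref{t:euclidean_Bochner_Schwartz}: it is the classical Bochner--Schwartz theorem, stated in the ``Examples'' subsection with the material attributed to the references cited at its head (and invoked again, via \cite[Theorem 1.8.7]{GrafakosTextbook}, in the proof of Proposition \ref{e:examples_Schwartz_like}(ii)). What you have written is a self-contained derivation along the standard lines, and it is sound: the modulation trick correctly exploits that all the functions $\psi_\xi * \psi_\xi^* = e^{2\pi i\langle\cdot,\xi\rangle}(\chi*\chi^*)$ share a single compact support, so the order $N$ and constant in the local seminorm estimate of Definition \ref{d:distributions} are uniform in $\xi$, while the sup-norms of derivatives grow only like $(1+\abs{\xi})^{N}$; combined with positivity of $\widehat{T}$ and of $\abs{\widehat{\chi}(\cdot-\xi)}^2$ this yields the polynomial growth of $\widehat{T}$, and the rest (continuity of $f\mapsto\widehat{T}(\widehat{f}\hspace{1pt})$ on $\mathscr{S}(\R^n)$, agreement with $T$ on $C_c^\infty(\R^n)^2$ via Plancherel--Godement, extension to all of $C_c^\infty(\R^n)$ by convolving with $\chi_\epsilon*\chi_\epsilon^*$, uniqueness by density) is routine. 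Two cosmetic points: your bump $\chi$ should be chosen, say, nonnegative and not identically zero so that $\widehat{\chi}(0)>0$ and the lower bound near the origin is guaranteed; and since that lower bound only holds on a small ball $B(0,\delta)$, you get control of $\widehat{T}(B(\xi,\delta))$ directly and must cover $B(\xi,1)$ (or $B(0,R)$) by finitely many such balls --- this costs only another polynomial factor, so the conclusion stands. It is also worth remarking that this is exactly the Euclidean prototype of the argument the paper \emph{does} carry out in detail for the Heisenberg case (Lemma \ref{l:polynomial_growth_Godement_Plancherel_measure_Heisenberg}), where dilations play the role that modulations play in your proof.
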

	\item Consider the pair $G=\R^n\rtimes SO(n)$, $K=SO(n)$. Then $X=G/K=\R^n$ and one can show that $(\R^n\rtimes SO(n), SO(n))$ is a Gelfand pair. 
	If $f$ is a bi-$K$-invariant function on $\R^n\rtimes SO(n)$, then $f$ satisfies
	\[f((0,A)(x, B)(0, (AB)^{-1}) = f((Ax, I_n))\]
	for all $x\in \R^n$ and $A,B\in SO(n)$. Thus there is a function $f_0:[0,\infty)\to \C$ with $f(x)= f_0(\norm{x})$.
	
	The spherical transform of $f$ is related to the Hankel transform of $f_0$.
	More specifically, the spherical functions of $(\R^n\rtimes SO(n), SO(n))$ are given by
	\[\varphi_\lambda(x, A)\coloneqq \Gamma\left(\frac{n}{2}\right)\left(\frac{\lambda\norm{x}}{2}\right)^{(2-n)/2}J_{(n-2)/2}(\lambda\norm{x})\qquad\text{ for }\lambda\geq 0,\]
	where $J_{k}$ denotes the Bessel function of the first kind of order $k\geq 0$. 
	Thus the spherical transform of $f$ is given by
	\begin{align*}
		\widehat{f}(\lambda)&\coloneqq \int_{\R^n}\int_{SO(n)} f(x,A)\varphi_\lambda(-A^{-1}x,A^{-1})dm_{SO(n)}(A)dx\\
		%&= \Gamma\left(\frac{n}{2}\right)\int_{R^n}f_0(\norm{x})\left(\frac{\lambda\norm{x}}{2}\right)^{(2-n)/2}J_{(n-2)/2}(\lambda\norm{x})dx\\
%		&= \Gamma\left(\frac{n}{2}\right)\frac{2\pi^{n/2}}{\Gamma(\frac{n}{2})}\int_{0}^\infty f_0(r)\left(\frac{\lambda r}{2}\right)^{(2-n)/2}J_{(n-2)/2}(\lambda r)dx\\ 
%		&= 2\pi^{n/2}\int_{0}^\infty f_0(r)\left(\frac{\lambda r}{2}\right)^{(2-n)/2}J_{(n-2)/2}(\lambda r)dx\\
		&= (2\pi)^{n/2}\frac{1}{\lambda^{(n-2)/2}}H_f((n-2)/2,\lambda),
	\end{align*}
	where $H_f$ denotes the Hankel transform of $r\mapsto r^{-n/2}f_0(r)$. Note that this is just the Euclidean Fourier transform of $f$ in radial coordinates.
\end{enumerate}

\subsubsection{Heisenberg space}
Set $\mathcal{H}_n\coloneqq \R\times \C^n$ and for $(t,x), (s, y)\in \mathcal{H}_n$ define
\[(t,v)\cdot (s,w)\coloneqq (ts-\frac{1}{2}\mathrm{Im} \langle v,w\rangle, v+w).\]
Then $(\mathcal{H}_n,\cdot)$ is a nilpotent Lie group, called \emph{Heisenberg group}.
The group $U(n)$ acts via
\[k.(t,v)\coloneqq (t, kv)\quad\text{for $k\in U(n)$}\]
by automorphisms on $\mathcal{H}_n$. Thus we can form the semidirect product $\mathcal{H}_n\rtimes U(n)$, called the \emph{Heisenberg motion group}. We note that $\mathcal{H}_n=\mathcal{H}_n\rtimes U(n)/U(n)$, which is why we will also call $\mathcal{H}_n$ the \emph{Heisenberg space}.
We can equip $\mathcal{H}_n$ with the Cygan-Koranyi metric $d_{CK}$, which is induced by the group norm
\[
\norm{(t,v)}\coloneqq (t^2+\norm{v}_2^4)^{1/2}
\]
on $\mathcal{H}_n$ and is proper, left-invariant and complete.\footnote{There are many other proper, left-invariant and complete metrics on $\mathcal{H}_n$.} We note that the Heisenberg group can be equipped with a family $(D_r)_{r> 0}$ of group automorphisms, defined by 
\[D_r:\mathcal{H}_n\to \mathcal{H}_n,\ (t,v)\mapsto (r^2t,rv).\]
These automorphisms are called \emph{dilations} of $\mathcal{H}_n$ and the Cygan-Koranyi metric is compatible with the family $(D_r)_{r>0}$
in the sense that
\[\norm{D_r(g)} = r\norm{g}\quad \text{for all }g\in \mathcal{H}_n, r>0.\]
This implies that 
\[D_r(B(g,s)) = B(D_r(g), rs)\]
 for all $g\in \mathcal{H}_n$ and $r,s>0$.
 For all measurable functions $f,\varphi:\mathcal{H}_n\to \C$ and all $r>0$ we have
 \[\int (f\circ D_r)(x) \varphi(x) dm_{\mathcal{H}_n}(x) = \frac{1}{r^{2n+2}}\int f(x)(\varphi\circ D_{1/r})(x)dm_{\mathcal{H}_n}(x)\]
 if the integrals exist.
 
The pair $(\mathcal{H}_n\rtimes U(n), U(n))$ forms a Gelfand pair. See \cite{Thangavelu} or \cite{Wolf} for a detailed exposition of the theory of spherical harmonic analysis of $(\mathcal{H}_n\rtimes U(n), U(n))$. 

\begin{theorem}[Benson--Jenkins--Ratcliff, \cite{BensonJenkinsRatcliff}]\label{t:parametrization_spherical_heisenberg}
	The spherical functions of $(\mathcal{H}_n\rtimes U(n), U(n))$ fall into the following two families:
	\begin{enumerate}[(A)]
		\item \[\phi_{\lambda, m}(t, v, k) = e^{i\lambda t}L_m^{(n-1)}\left (\frac{1}{2}\lambda\norm{v}_2^2\right )e^{-\frac{1}{4}\lambda\norm{v}_2^2}\] for $\lambda>0$ and $m\in \Z_+$ and 
		$\phi_{\lambda, m} = \overline{\phi_{\abs{\lambda}, m}}$ for $m\in \Z_+$ and $\lambda < 0$.
		\item \[\eta_\tau(t, v, k) = \frac{2^{n-1}(n-1)!}{(\tau\norm{v}_2)^{n-1}}J_{n-1}(\tau\norm{v}_2)\] for $\tau>0$ and $\eta_0(t,v,k)=1$ for all $(t,v,k)\in \mathcal{H}_n\rtimes U(n)$.
	\end{enumerate}
	Here 
	\[
	L_m^{(n-1)}(x) = (n-1)!\sum_{j=0}^m\binom{m}{j}\frac{(-x)^j}{(j+n-1)!}
	\]
	is the generalized Laguerre polynomial of order $n-1$ normalized to $1$ at $0$ and $J_{n-1}$ is the Bessel function of order $n-1$.
\end{theorem}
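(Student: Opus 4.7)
The plan is to classify the irreducible unitary representations of $G = \mathcal{H}_n \rtimes U(n)$ that admit a nonzero $U(n)$-fixed vector and then compute the corresponding matrix coefficients. For a Gelfand pair, every bounded spherical function is of the form $\omega_\pi(g) = \langle \pi(g) v, v\rangle$ for an irreducible unitary representation $(\pi, \mathcal{H}_\pi)$ and a unit $K$-fixed vector $v$, so the theorem reduces to the classification of spherical representations of $G$ together with explicit matrix-coefficient computations in the coordinates $(t, v, k)$.

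First I would apply Mackey's semidirect product theory. By Stone--von Neumann, the unitary dual of $\mathcal{H}_n$ consists of the Schrödinger representations $\pi_\lambda$ on $L^2(\mathbb{R}^n)$ for $\lambda \in \mathbb{R}^\times$ and the one-dimensional characters $\chi_\xi(t, v) = e^{i \operatorname{Re} \langle \xi, v \rangle}$ for $\xi \in \mathbb{C}^n$. The $U(n)$-action on $\widehat{\mathcal{H}_n}$ fixes each $\pi_\lambda$ up to unitary equivalence and partitions the characters into orbits parametrized by $\tau = \|\xi\|_2 \geq 0$, with stabilizer $U(n-1)$ in the generic case. Mackey's machine then yields the two families of irreducible representations of $G$ promised by the theorem.

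For family (A), realize $\pi_{|\lambda|}$ on the Fock--Bargmann space $\mathcal{F} = \bigoplus_{m \geq 0} \mathcal{F}_m$, where $\mathcal{F}_m$ is the space of degree-$m$ holomorphic polynomials; the natural $U(n)$-action extends $\pi_{|\lambda|}$ to $G$ via the oscillator representation (which lifts genuinely to $U(n)$), and each $\mathcal{F}_m$ is $U(n)$-irreducible, isomorphic to $\operatorname{Sym}^m \mathbb{C}^n$. The irreducible representations of $G$ restricting to a multiple of $\pi_\lambda$ are obtained by tensoring the extended $\widetilde{\pi}_\lambda$ with irreducible $U(n)$-representations, and the spherical ones are precisely those of the form $\widetilde{\pi}_\lambda \otimes \mathcal{F}_m^*$ for $m \geq 0$, each carrying a unique $U(n)$-invariant vector $v_m \in \mathcal{F}_m \otimes \mathcal{F}_m^*$ given by the canonical invariant tensor. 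Computing $\langle \widetilde{\pi}_\lambda(t, v) v_m, v_m\rangle$ from the explicit Fock-space formulas produces the factor $e^{i\lambda t}$, the Gaussian $e^{-\frac{1}{4}\lambda \|v\|^2}$ from the Fock inner product, and a polynomial in $\lambda\|v\|^2$ which, after normalization via a generating-function identity, is precisely $L_m^{(n-1)}(\tfrac{1}{2}\lambda \|v\|^2)$. This recovers $\phi_{\lambda, m}$ for $\lambda > 0$; the case $\lambda < 0$ follows from $\pi_{-\lambda} \cong \overline{\pi_\lambda}$ by complex conjugation.

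For family (B), the orbit $\tau = 0$ gives the representations of $G$ trivial on $\mathcal{H}_n$, whose spherical matrix coefficient is $\eta_0 \equiv 1$. For $\tau > 0$, the induced representation $\operatorname{Ind}_{\mathcal{H}_n \rtimes U(n-1)}^G \chi_{\tau e_1}$ realizes on $L^2(U(n)/U(n-1)) \simeq L^2(S^{2n-1})$ and admits the constant function as its unique unit $U(n)$-fixed vector; its matrix coefficient reduces to the $U(n)$-average $\int_{U(n)} e^{i\tau \operatorname{Re} \langle e_1, kv\rangle} \, dk$, which evaluates by the Funk--Hecke identity to the normalized Bessel expression defining $\eta_\tau$. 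The hard part will be the explicit evaluation in family (A): pinning down the $U(n)$-invariant vector $v_m$ in $\mathcal{F}_m \otimes \mathcal{F}_m^*$ under the oscillator-extended Heisenberg action and identifying the resulting polynomial in $\lambda \|v\|^2$ as the normalized generalized Laguerre polynomial $L_m^{(n-1)}$. Completeness of the list, i.e.\ that no further bounded spherical functions exist, follows from the fact that the above exhausts the spherical dual produced by Mackey's theorem.
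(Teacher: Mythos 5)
The paper does not prove this theorem: it is quoted from Benson--Jenkins--Ratcliff \cite{BensonJenkinsRatcliff}, so there is no internal proof to compare against. Your sketch is in substance the standard argument of that reference (also in Thangavelu's and Wolf's books): Mackey theory plus Stone--von Neumann to list the $U(n)$-spherical irreducible unitary representations of $\mathcal{H}_n\rtimes U(n)$; the Fock--Bargmann model with the genuine extension $W(k)F=F\circ k^{-1}$ and the multiplicity-one decomposition $\mathcal{P}(\C^n)=\bigoplus_{m}\mathcal{F}_m$ to produce the Laguerre family; and the $U(n)$-orbit average of the characters $\chi_\xi$ to produce the Bessel family. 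This matches the formula for $\phi_{\lambda,\alpha}$ that the paper records in Appendix \ref{S:Bochner-Schwarz appendix}, and the representation-theoretic bookkeeping (stabilizers, uniqueness of the invariant vector in $\mathcal{F}_m\otimes\mathcal{F}_m^*$, Frobenius reciprocity for the induced representations) is correct.

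One caveat concerns completeness. Your opening reduction, that for a Gelfand pair every bounded spherical function is of the form $\langle\pi(g)v,v\rangle$, is false in general: bounded spherical functions need not be of positive type, as the Helgason--Johnson parametrization for noncompact semisimple pairs shows. The Mackey argument therefore only delivers $PS(G,K)$. To conclude that the two families exhaust all \emph{bounded} spherical functions of $(\mathcal{H}_n\rtimes U(n),U(n))$ you need the additional Benson--Jenkins--Ratcliff result that for this particular pair every bounded spherical function is positive definite, i.e.\ $BS(G,K)=PS(G,K)$ (a fact the paper also uses implicitly in the appendix when writing $\Sigma_n=\widehat{V}(BS(G,K))=\widehat{V}(PS(G,K))$). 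With that supplement, and the admittedly nontrivial Laguerre computation you flag as the hard part, the argument is complete.
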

As the Haar measure $m_{\mathcal{H}_n}$ is given by $m_{\R}\otimes m_{\C^n}$, the spherical transform on $\mathcal{H}_n$ is just given by ordinary integration with respect to the Lebesgue measure on $\R\times \C^n$.

Denote the Lie algebra of the Heisenberg group $\mathcal{H}_n$ by $\mathfrak{h}_n$ and note that $\exp:\mathfrak{h}_n\to \mathcal{H}_n$ is a diffeomorphism. We define the Schwartz space $\mathscr{S}(\mathcal{H}_n)$ by 
\[\mathscr{S}(\mathcal{H}_n)\coloneqq \{f\circ \exp^{-1}\mid f\in \mathscr{S}(\mathfrak{h}_n)\}\]
and set
\[\mathscr{S}(\mathcal{H}_n, U(n))\coloneqq \{f\in \mathscr{S}(\mathcal{H}_n)\mid f(t,kv) = f(t,v)\text{ for all $k\in U(n)$}\}.\]

We define the space of bi-$U(n)$-invariant Schwartz functions on $\mathcal{H}_n\rtimes U(n)$ by
\[\mathscr{S}(\mathcal{H}_n\rtimes U(n), U(n)) = \{f\circ \pi\mid f\in \mathscr{S}(\mathcal{H}, U(n))\}\]
and topologize it such that the canonical map $\mathscr{S}(\mathcal{H}_n, U(n))\to \mathscr{S}(\mathcal{H}_n\rtimes U(n), U(n))$ is a topological isomorphism.

Due to lack of reference we prove the following theorem in Appendix \ref{S:Bochner-Schwarz appendix}:
\begin{theorem}\label{t:heisenberg_bochner_schwartz}
	Let $T$ be a positive-definite distribution on $\mathcal{H}_n\rtimes U(n)$. Then there is a (unique) continuous functional
	\[\tilde{T}:\mathscr{S}(\mathcal{H}_n\rtimes U(n), U(n))\to \C\]
	such that 
	\[\tilde{T}f = Tf\]
	for any $f\in C^\infty_c(\mathcal{H}_n\rtimes U(n),U(n))$
	and 
	\[\tilde{T}f = \widehat{T}(\widehat{f}\hspace{1pt})\]
	for all $f\in \mathscr{S}(\mathcal{H}_n\rtimes U(n), U(n))$.
\end{theorem}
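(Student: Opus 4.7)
The plan is to extract $\tilde T$ from the Plancherel--Godement decomposition of $T$ and to promote the resulting spectral measure on the Heisenberg spherical dual to a tempered object, so that it can be paired against spherical transforms of Schwartz functions. By Theorem \ref{t:Godement-Plancherel_for_distributions} we obtain a unique positive Radon measure $\widehat T$ on $PS(\mathcal H_n\rtimes U(n), U(n))$ satisfying
\[
T[f * g^*] = \widehat T\bigl(\widehat f\cdot\overline{\widehat g}\hspace{2pt}\bigr)
\quad\text{for all } f,g\in C_c^\infty(\mathcal H_n\rtimes U(n), U(n)).
\]
Using the parametrization of Theorem \ref{t:parametrization_spherical_heisenberg}, I identify $PS$ with a subset of $(\R\times\Z_+)\sqcup [0,\infty)$ endowed with its natural topology; the goal is then to define $\tilde T(f) := \widehat T(\widehat f\hspace{1pt})$ for $f\in\mathscr S(\mathcal H_n\rtimes U(n),U(n))$ and to verify that this extension is continuous and agrees with $T$ on smooth compactly supported test functions.

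The technical core is to show that $\widehat T$ has polynomial growth in the spectral parameters $(\lambda,m,\tau)$. For any bi-$U(n)$-invariant $\psi \in C_c^\infty(\mathcal H_n\rtimes U(n),U(n))$ direct application of Theorem \ref{t:Godement-Plancherel_for_distributions} gives $T[\psi * \psi^*] = \widehat T(|\widehat\psi|^2)$. I would construct a family $\{\psi_N\}$ of bi-$U(n)$-invariant smooth compactly supported cutoffs whose spherical transforms are bounded below on the spectral regions $\{|\lambda|\leq N,\ m\leq N,\ \tau\leq N\}$, with both the test-function seminorms of $\psi_N$ and the required spectral lower bounds polynomial in $N$. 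Concretely, the $\psi_N$ can be obtained by truncating a fixed Gaussian-type bump on $\mathcal H_n$, whose spherical transform is everywhere strictly positive by the explicit Laguerre and Bessel formulas of Theorem \ref{t:parametrization_spherical_heisenberg}, and rescaling via the dilations $D_r$. Combined with the identity above, this gives $\widehat T(\{|\lambda|\leq N,\ m\leq N,\ \tau\leq N\}) \leq C_N$ with $C_N$ polynomial in $N$, hence temperedness of $\widehat T$.

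With $\widehat T$ tempered, the prescription $\tilde T(f) := \widehat T(\widehat f\hspace{1pt})$ is well defined on $\mathscr S(\mathcal H_n\rtimes U(n), U(n))$, provided that the spherical transform maps this Schwartz space continuously into a Schwartz-type space on the spectrum. This fact follows from the explicit forms of $\phi_{\lambda,m}$ and $\eta_\tau$, integration by parts, and the Euclidean identification $\mathscr S(\mathcal H_n)=\mathscr S(\mathfrak h_n)$; it is essentially a repackaging of the classical Hermite--Laguerre and Hankel Schwartz-isomorphism theorems. Continuity of $\tilde T$ in the Schwartz topology then follows immediately from the temperedness of $\widehat T$.

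It remains to verify that $\tilde T$ agrees with $T$ on $C_c^\infty(\mathcal H_n\rtimes U(n), U(n))$. Fix a symmetric bi-$U(n)$-invariant smooth approximate identity $\rho_\epsilon \in C_c^\infty(\mathcal H_n\rtimes U(n), U(n))$. For every $\psi \in C_c^\infty(\mathcal H_n\rtimes U(n), U(n))$ we have $\psi * \rho_\epsilon \to \psi$ in the test-function topology, so $T(\psi * \rho_\epsilon) \to T(\psi)$. On the other hand, Theorem \ref{t:Godement-Plancherel_for_distributions} gives $T(\psi * \rho_\epsilon) = T(\psi * \rho_\epsilon^*) = \widehat T(\widehat\psi\cdot\widehat{\rho_\epsilon})$, and since $|\widehat{\rho_\epsilon}|\leq 1$ pointwise on $PS$ and $\widehat{\rho_\epsilon}\to 1$ pointwise, dominated convergence (using $\widehat\psi\in L^1(\widehat T)$) gives $\widehat T(\widehat\psi\,\widehat{\rho_\epsilon}) \to \widehat T(\widehat\psi) = \tilde T(\psi)$. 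Uniqueness of $\tilde T$ then follows from density of $C_c^\infty$ in $\mathscr S$. The main obstacle is the temperedness argument in the second paragraph: the spherical dual carries two qualitatively different families of spherical functions together with a discrete Laguerre index, so the cutoffs $\psi_N$ must be designed to localize spectrally and simultaneously in the continuous parameters $(\lambda,\tau)$ and in $m$.
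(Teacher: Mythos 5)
Your overall strategy is the same as the paper's: extract the spectral measure $\widehat{T}$ from the Plancherel--Godement theorem, prove it has polynomial growth by testing $T$ against dilates $\epsilon^{-h}\chi\circ D_{1/\epsilon}$ of a fixed positive-definite bump $\chi=\phi*\phi^*$, invoke a Schwartz-isomorphism theorem for the spherical transform to make $\widehat{T}(\widehat{f}\hspace{1pt})$ well defined and continuous on $\mathscr{S}(\mathcal{H}_n\rtimes U(n),U(n))$, and then identify $\tilde{T}$ with $T$ on $C_c^\infty$ by density (your approximate-identity argument here is fine, and arguably cleaner than working only with functions of the form $f^**f$).

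The genuine gap is in your third paragraph. You identify $PS(\mathcal{H}_n\rtimes U(n),U(n))$ with a subset of $(\R\times\Z_+)\sqcup[0,\infty)$ ``with its natural topology'' and assert that the spherical transform maps $\mathscr{S}(\mathcal{H}_n,U(n))$ continuously into a Schwartz-type space on this spectrum as ``essentially a repackaging of the classical Hermite--Laguerre and Hankel Schwartz-isomorphism theorems.'' This is not correct as stated: the Gelfand topology on the spectrum is \emph{not} the disjoint-union topology. The Bessel-type functions $\eta_\tau$ lie in the closure of the Laguerre-type family $\phi_{\lambda,m}$ (as $\lambda\to 0$ with $\lambda(2m+n)$ converging), so the spectrum is the ``Heisenberg fan,'' and the whole difficulty is the compatibility of $\widehat{f}$ across this degenerate stratum. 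The one-variable Laguerre and Hankel isomorphisms applied separately do not yield the joint statement you need; what does is the theorem of Astengo--Di Blasio--Ricci, which embeds the spectrum into $\R^{s+1}$ via the eigenvalues $(\widehat{V}_0(\phi),\dots,\widehat{V}_s(\phi))$ of a generating set of invariant differential operators and proves that $f\mapsto\widehat{f}\circ\widehat{V}^{-1}$ is a topological isomorphism onto $\mathscr{S}(\R^{s+1})|_{\Sigma_n}$ (a Whitney-type extension statement). This is a substantive result, not a repackaging, and both the well-definedness of $\widehat{T}(\widehat{f}\hspace{1pt})$ for non-compactly-supported $f$ and the continuity of $\tilde{T}$ collapse without it. Relatedly, ``polynomial growth'' of $\widehat{T}$ must be formulated in the $\widehat{V}$-coordinates for the continuity estimate to pair correctly with the decay of $\widehat{f}$; your dilation argument does produce this (since $\widehat{V}_j(\phi\circ D_r)=r^{2m_j}\widehat{V}_j(\phi)$, so dilating a fixed neighbourhood of $\mathbf{1}$ sweeps out balls of polynomially growing radius in $\R^{s+1}$), but the boxes $\{|\lambda|\le N,\ m\le N,\ \tau\le N\}$ are the wrong sets to bound, both because of the topology and because the index $m$ is not rescaled by dilations. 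Once you replace your parametrization by the Astengo--Di Blasio--Ricci embedding and cite their isomorphism theorem, the rest of your argument goes through and coincides with the paper's proof.
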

\subsubsection{Noncompact semisimple Lie groups}\label{ss:noncompt_ss_Lie_harmonic}
Assume now that $G$ is a (connected) semisimple Lie group with finite center and no compact factors and choose a Cartan decomposition $\mathfrak{g} = \mathfrak{k}\oplus\mathfrak{p}$ of the Lie algebra of $G$. 
The following is well-known (see \cite{VaradarajanGangolli} and \cite{Knapp}) and we use the usual notation for this setup, i.e.
\begin{enumerate}[(i)]
	\item $\mathfrak{a}$ is a maximal abelian subspace of $\mathfrak{p}$,
	\item $\rho$ is the Weyl vector associated to the restricted root space decomposition with respect to $\mathfrak{a}$,
	\item $K$ is the maximal compact subgroup of $G$ associated to $K$,
	\item $\kappa$ is the Killing form of $\mathfrak{g}$,
	\item $W$ is the Weil group of the restricted root system.
\end{enumerate} 
$(G, K)$ is a Gelfand pair and $G$ has the Cartan decomposition $G=K\exp(\mathfrak{p})$. The Iwasawa decomposition $G=KAN$ allows us to define \[H:G=KAN\to \mathfrak{a},\ g=kan\mapsto \exp^{-1}(a).\]
We will assume that the measures $m_K, m_A, m_N$ and $m_\mathfrak{a}$ obey the standard normalization, see Section 2.4 of \cite{VaradarajanGangolli}. 
Using the function $H$, for each $\lambda\in \mathfrak{a}_\C^*$, the dual of the complexification of $\mathfrak{a}$, we define the function
\[\varphi_\lambda(g)=\int_Ke^{(i\lambda-\rho)(H(gk))}dm_K(k).\]
This formula parametrizes a superset of all bounded spherical functions of $(G,K)$ and is called the Harish-Chandra parameterization (see \cite{Helgason2} and \cite{VaradarajanGangolli}). It has the following properties:
\begin{enumerate}[(i)]
	\item $\varphi_\lambda = \varphi_\mu$ iff there is some $w\in W$ with $w\lambda = \mu$.
	\item $\varphi_{-\lambda}(g) = \varphi_\lambda(g^{-1})$.
	\item If $\varphi_\lambda\in PS(G, K)$, then there is a $w\in W$ with $w\lambda = \overline{\lambda}$.
\end{enumerate}
Define the tube domains $\mathscr{F}^\varepsilon\coloneqq \mathfrak{a}^*+i\varepsilon C$, where $C$ is the closed convex hull of $\{w\rho\mid w\in W\}$.
The Helgason-Johnson theorem states that
\[BS(G, K) = \{\varphi_\lambda\mid \lambda\in \mathscr{F}^1\}\]
and using property (i) above, one can identify $BS(G, K)$ with the set $\mathscr{F}^1/W$.

The Harish-Chandra $\Xi$-function is defined as 
\[\Xi\coloneqq \varphi_0\]
and one defines 
\[\sigma:G=K\exp(\mathfrak{p})\to \R,\ k\exp(X)\mapsto \sqrt{\kappa(X,X)}.\]
Using $\Xi$ and $\sigma$ one can define the Harish-Chandra $L^p$-Schwartz seminorms
\[q_{D,E, m, p}(f)\coloneqq \sup_{g\in G} \frac{ \abs{f(D;g;E)}}{(1+\sigma(g))^{-m}\Xi(g)^{-2/p}},\] where $0< p\leq 2$, $D,E\in \mathcal{U(\mathfrak{g})}$ and $m\in \N_0$. 

Here $\mathcal{U}(\mathfrak{g})$ denotes the universal enveloping algebra of $\mathfrak{g}$ and $f(D;\cdot;E)$ is the function obtained from $f$ by acting on the left by the differential operator $D$ and from the right by the differential operator $E$.

We define the Harish-Chandra $L^p$-Schwartz spaces for $0<p\leq 2$ as
\[
\mathscr{S}^p(G)\coloneqq \left\{f\in C^\infty(G)\setdiv \forall m\in \N_0\forall D,E\in \mathcal{U}(\mathfrak{g}) : q_{D,E,m,p}(f)<\infty \right\}.
\]
The spaces $\mathscr{S}^p(G)$ are topologized by the families 
\[
(q_{D,E,m,p})_{D,E\in \mathcal{U(\mathfrak{g})},m\in \N_0}
\]
 of seminorms and we denote by $\mathscr{S}^p(G, K)$ the set of bi-$K$-invariant functions in $\mathscr{S}^p(G)$.
It can be shown that $\mathscr{S}^p(G,K)\subset \mathscr{S}^q(G, K)$ for $p\leq q$.

The Harish-Chandra transform of $f\in \mathscr{S}^2(G, K)$ is defined by
\[\mathcal{H}(f)(\lambda) \coloneqq \int f(g)\varphi_{-\lambda}(g)dm_G(g)\]
and the Abel transform of $f$ is defined by
\[\mathcal{A}(f)(H) = e^{\rho(H)}\int_Nf(\exp(H)n)dm_N(n).\]
These transforms fit in the following commutative diagram of isomorphism:
\begin{center}
	\begin{tikzcd}
		{\mathscr{S}^2(G,K)} \arrow[rr, "\mathcal{H}"] \arrow[rd, "\mathcal{A}"'] &   & \mathscr{S}(\mathscr{F}^0)^W \\
		& \mathscr{S}(\mathfrak{a})^W \arrow[ru, "\mathcal{F}"'] &                             
	\end{tikzcd}
\end{center}

Here $\mathcal{F}$ denotes the Fourier transform (note that any factors of $2\pi$ disappear by the standard normalization of the Haar measures)
\[\mathcal{F}(f)(\lambda)\coloneqq \int_\mathfrak{a} f(H)e^{-i\lambda(H)}dm_\mathfrak{a}(H)\]
and $\mathscr{S}(\mathfrak{a})^W$ and $\mathscr{S}(\mathscr{F}^0)^W$ denote the Weyl group invariant elements of the ordinary Schwartz spaces on the real vector spaces $\mathfrak{a}$ and $\mathscr{F}^0=\mathfrak{a}^*$.
A theorem by Trombi and Varadarajan characterizes the image of $\mathscr{S}^P(G, K)$ under $\mathcal{H}$  in terms of spaces of functions on tube domains, see for instance Theorem 7.10.9 in \cite{VaradarajanGangolli} for a precise statement.

A continuous functional $\mathscr{S}^p(G)\to \C$ is called a $L^p$-tempered distribution.
\begin{theorem}[Spherical Bochner-Schwartz theorem (Barker,\cite{Barker})]\label{t:noncompact_ss_bochner_schwartz}
	In the setting above assume that $T$ is a positive-definite distribution on $G$. Then $T$ has a unique extension to an $L^1$-tempered distribution $\tilde{T}$ and for all $f\in \mathscr{S}^1(G, K)$ the formula
	\[\tilde{T}f = \int \widehat{f}d\widehat{T}\]
	holds.
\end{theorem}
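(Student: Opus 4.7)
My plan is to follow the standard Bochner--Schwartz strategy adapted to the spherical setting: use Plancherel--Godement to produce the spectral measure, prove that it is tempered, and then invert via the Harish-Chandra transform. First I would apply Theorem \ref{t:Godement-Plancherel_for_distributions} to produce the regular Borel measure $\widehat{T}$ on $PS(G,K)$, so that $T[f*g^*] = \widehat{T}(\widehat{f}\cdot \overline{\widehat{g}})$ for all $f,g \in C_c^\infty(G,K)$. This already defines the would-be extension on the dense subalgebra $C_c^\infty(G,K)^2 \subset C_c^\infty(G,K)$ via the formula in the statement; the task is to show that this extends continuously to all of $\mathscr{S}^1(G)$ and to identify the extension via integration against $\widehat{T}$ on the bi-$K$-invariant part.

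The crux is to show that $\widehat{T}$ is \emph{tempered}: supported on the spherical tempered dual $\{\varphi_\lambda : \lambda \in \mathfrak{a}^*\}/W \subset PS(G,K)$, and of polynomial growth with respect to the Plancherel measure on $\mathfrak{a}^*/W$. I would test $\widehat{T}$ against carefully chosen elements of $C_c^\infty(G,K)^2$: for any compact subset $\Omega$ of $PS(G,K)\setminus \mathfrak{a}^*/W$, one can use the Trombi--Varadarajan isomorphism $\mathcal{H}:\mathscr{S}^1(G,K)\to$ image to produce an $h\in C_c^\infty(G,K)$ (approximated in $\mathscr{S}^1$) with $|\widehat{h}|^2\geq 1$ on $\Omega$ but arbitrarily small Schwartz seminorms; coupled with the local estimate granted by Definition \ref{d:distributions} applied to $T[h*h^*]$, this pushes $\widehat{T}(\Omega)$ to zero. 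The same kind of test-function calibration, now carried out on $\mathfrak{a}^*$ with bump profiles of prescribed scale, converts the Sobolev-type control on $T$ into a polynomial bound for $\widehat{T}$ against the Plancherel density.

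Once temperedness is in hand, the formula $\tilde{T}f \coloneqq \int \widehat{f}\, d\widehat{T}$ makes sense for $f \in \mathscr{S}^1(G,K)$ because $\widehat{f}$ lies in $\mathscr{S}(\mathscr{F}^0)^W$ by Trombi--Varadarajan, and continuity of $\tilde{T}$ in the Harish-Chandra $L^1$-Schwartz topology on $\mathscr{S}^1(G,K)$ is immediate from the polynomial growth of $\widehat{T}$ together with continuity of $\mathcal{H}$. Agreement with $T$ on $C_c^\infty(G,K)$ follows by writing elements via an approximate identity as limits of convolutions $g*h^*$ and invoking the Plancherel--Godement identity. To pass from $\mathscr{S}^1(G,K)$ to $\mathscr{S}^1(G)$, I would apply the $K$-averaging $f\mapsto f^\natural$, which is continuous with respect to the Harish-Chandra $L^1$-seminorms (since left and right $K$-translation are), and extend $T$ by $\tilde{T}(f)\coloneqq \tilde{T}(f^\natural)$ after first reducing to the case where $T$ itself is $K$-spherical; for a general positive-definite $T$, the same argument applies after replacing $T$ by the $K\times K$-average $T^\natural$, which has the same value on bi-$K$-invariant test functions. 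Uniqueness of $\tilde{T}$ is then forced by the density of $C_c^\infty(G)$ in $\mathscr{S}^1(G)$.

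The main obstacle is the temperedness argument. Definition \ref{d:distributions} only offers local control of $T$ by a finite sum of sup-norms of differentiated test functions, and converting this into polynomial control of $\widehat{T}$ on $\mathfrak{a}^*/W$ requires a precise quantitative matching: the test functions $h$ must simultaneously have bump-like Harish-Chandra transforms concentrated at a point $\lambda\in \mathfrak{a}^*$ of prescribed norm \emph{and} have controlled $D(G)$-derivatives against the Harish-Chandra $\Xi$-function weight. The needed estimates rely on the detailed asymptotics of $\varphi_\lambda$ and on the Plancherel density, and this is where the bulk of the technical work in Barker's argument is concentrated.
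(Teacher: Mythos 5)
The paper does not prove this theorem itself (it is quoted from Barker), so I am judging your sketch against the correct statement and the known line of argument. Your overall architecture (Plancherel--Godement, a growth estimate on $\widehat{T}$, then Trombi--Varadarajan to make sense of $\int \widehat{f}\,d\widehat{T}$, $K$-averaging, density) is the right one, but the central claim in your second paragraph is false, and the error is fatal: the spherical transform $\widehat{T}$ of a positive-definite distribution is \emph{not} in general supported on the tempered dual $\{\varphi_\lambda : \lambda\in\mathfrak{a}^*\}/W$. The simplest counterexample is $T=m_G$, which is positive definite since $m_G(f^**f)=\abs{m_G(f)}^2\geq 0$, and whose spherical transform is $\delta_{\mathbf{1}}$, a point mass at the trivial spherical function $\mathbf{1}=\varphi_{i\rho}$, which lies in $PS(G,K)$ but not on $\mathfrak{a}^*/W$. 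This is not a peripheral issue: the entire application in this paper (Lemma \ref{l:formula_diffraction_and_reduced_diffraction} and Theorem \ref{t:lp_bound}) rests on $\widehat{\eta}_\Lambda^+$ carrying an atom of mass $i(\Lambda)^2$ at the non-tempered point $\mathbf{1}$. Your proposed mechanism for proving the support claim also cannot work: a bi-$K$-invariant $h$ supported in a fixed compact set with small $C^N$-norms has small $L^1$-norm, hence $\abs{\widehat{h}(\varphi)}\leq \norm{h}_{L^1}$ uniformly over \emph{all} bounded spherical functions $\varphi$, so no such $h$ with $\abs{\widehat{h}}^2\geq 1$ on a compactum $\Omega$ exists. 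Relatedly, your assertion that $\widehat{f}\in\mathscr{S}(\mathscr{F}^0)^W$ for $f\in\mathscr{S}^1(G,K)$ conflates the Trombi--Varadarajan images: $\mathscr{S}(\mathscr{F}^0)^W$ is the image of $\mathscr{S}^2(G,K)$, while the image of $\mathscr{S}^1(G,K)$ consists of functions holomorphic and rapidly decreasing on the full tube containing the Helgason--Johnson parameter set of $BS(G,K)$.

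What actually has to be proved is weaker and different: $\widehat{T}$ is a positive measure on $PS(G,K)\subset\{\varphi_\lambda:\lambda\in\mathscr{F}^1\}/W$ whose total mass on $\{\abs{\re\lambda}\leq R\}$ grows at most polynomially in $R$ (this is Barker's growth estimate, obtained by testing $T$ against dilated/calibrated functions of the form $h*h^*$ much as in the Heisenberg argument of Appendix \ref{S:Bochner-Schwarz appendix}). Since the imaginary parts of the relevant $\lambda$ are uniformly bounded and $\widehat{f}$ for $f\in\mathscr{S}^1(G,K)$ decays faster than any polynomial on the whole tube, the integral $\int\widehat{f}\,d\widehat{T}$ then converges and defines the continuous extension; the remaining steps of your sketch ($K$-averaging, agreement on $C_c^\infty$ via approximate identities, uniqueness by density) are fine once this is in place. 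As written, however, your proof establishes a statement that contradicts the theorem's intended use, so the gap must be repaired by replacing the support claim with the polynomial growth estimate on all of $PS(G,K)$.
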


The spaces $G/K$ are equipped with a natural left-invariant Riemannian metrics induced by the bilinear form $\kappa$ on $\mathfrak p \cong T_{x_0}(G/K)$. Note that $\kappa|_{\mathfrak{p}\times \mathfrak{p}}$ is positive-definite. A wealth of information about the geometry and harmonic analysis on these spaces can be found in the books \cite{Helgason} and \cite{Helgason2} by Helgason.
\begin{remark}\label{r:harmonic_analysis_hyperbolic_space}
	A particularly interesting case occurs in this family of examples when $G=SO(n,1)$ and $K=SO(n)$, $n\geq 2$.
	
	In this case $G/K$ with the Riemannian metric induced by $\kappa$ is isometric to the hyperbolic $n$-space $\mathbb{H}^n$.
	Moreover $A$ is one-dimensional, such that bi-$K$-invariant functions only depend on $r=d(gx_0,x_0)$.
	
	Thus the value of a spherical function $\varphi$ at $g\in G$ only depends on the distance $r=d(gx_0, x_0)$. 
	Set $\rho = \frac{n-1}{2}$. 
	Then the positive definite spherical functions in radial coordinates are given by
	\begin{align*}
		\varphi_\lambda(r)= \varphi_\lambda^{(\rho, -\frac{1}{2})}(r) = ~_2F_1\left(\frac{\rho+i\lambda}{2}, \frac{\rho-i\lambda}{2};\frac{n}{2}; -\sinh(r)^2\right)
	\end{align*}
	with \[\lambda\in i[0,\rho] \cup [0,\infty),\]
	where $~_2F_1(a,b;c;z)$ denotes the Gauss hypergeometric function and the $\varphi_\lambda^{(\alpha,\beta)}$ denote the Jacobi functions.
	The spherical transform in radial coordinates is then given by
	\begin{align*}
		\widehat{f}(\lambda) = 2\frac{\pi^{n/2}}{\Gamma(n/2)}\int_0^\infty f(r)\varphi_{\lambda}(r)\sinh(r)^{n-1}dr,
	\end{align*}
	with the additional $\sinh$-term coming from the Haar measure.
	
	In this case there is also an explicit formula for the Abel transform, given in radial coordinates by
	\[\mathcal{A}f(r) = \frac{(2\pi)^{\frac{n-1}{2}}}{\Gamma(\frac{n-1}{2})}\int_{\abs{r}}^\infty\sinh(s)(\cosh(s)-\cosh(r))^{\frac{n-3}{2}}f(s)ds.\]
	
	In odd dimensions the inverse of the Abel transform can be obtained by
	\[\mathcal{A}^{-1}(f)(r) = (2\pi)^{-\frac{n-1}{2}}\left(-\frac{1}{\sinh(r)}\frac{\partial}{\partial r}\right)^{\frac{n-1}{2}}f(r)\]
	and in even dimensions by 
	\[\mathcal{A}^{-1}(f)(r) = \frac{1}{2^{\frac{n-1}{2}}\pi^{\frac{n}{2}}}\int_{\abs{r}}^\infty\frac{-\frac{\partial}{\partial s}\left (-\frac{1}{\sinh(r)}\frac{\partial}{\partial s}\right)^{n/2-1}g(s)}{\sqrt{\cosh(s)-\cosh(r)}}ds.\]

	%Explicit calculations in the case $G=SL_n(\R)$ can also be found in \cite{LangSLn}.
	
	For $G = SO(n,1)$, $K=SO(n)$ the spaces $\mathscr{S}^p(G, K)$ can be identified via radial coordinates with the spaces \[\cosh^{-\frac{n-1}{p}}\mathscr{S}_{\mathrm{even}}(\R),\] where $\mathscr{S}_\mathrm{even}(\R)$ denotes the space of even Schwartz functions on $\R$ (see Theorem 6.1 in \cite{Koornwinder} or 2.28 in \cite{AnkerHarmonicANGroups}).
	
	The image of $\mathscr{S}^p(G,K)$, $1\leq p \leq 2$, under the Harish-Chandra transform $\mathcal{H}$ is given by the set $\overline{\mathscr{L}}(\mathscr{F}^{1/p-1/2})$ of smooth even functions $f$ on the strip
	\[\mathscr{F}^{1/p-1/2}=\left\{\lambda\in \C\ \middle|\ \abs{\mathrm{Im}(\lambda)}\leq \left(\frac{1}{p}-\frac{1}{2}\right)(n-1)\right\},\]
	which are holomorphic on its interior and satisfy
	\[\sup_{\abs{\mathrm{Im}(\lambda)}\leq \left(\frac{1}{p}-\frac{1}{2}\right)(n-1)}(1+\abs{\lambda})^N\left\lvert\left(\frac{d}{d\lambda}\right)^Mf(\lambda)\right \rvert<\infty\]
	for all $M,N\in \N_0$. The image of $\mathscr{S}^p(G,K)$ under the Abel transform can be identified with the space
	\[\cosh^{-(1/p-1/2)(n-1)}\mathscr{S}_{\mathrm{even}}(\R)\]
	and we have the following commutative diagram of isomorphisms:
	\begin{center}
	\begin{tikzcd}
		\cosh^{-\frac{n-1}{p}}\mathscr{S}_{\mathrm{even}}(\mathbb R) \arrow[rr, "\mathcal{H}"] \arrow[rd, "\mathcal{A}"'] &                                                                                                      & \overline{\mathscr{L}}(\mathscr{F}^{1/p-1/2}) \\
		& \cosh^{-(1/p-1/2)(n-1)}\mathscr{S}_\mathrm{even}(\mathbb R) \arrow[ru, "\mathcal{F}"'] &                                              
	\end{tikzcd}
	\end{center}
	See \cite{Koornwinder} and \cite{AnkerHarmonicANGroups} for further details and references.
	
	For semisimple groups with $\dim(A) = 1$, similar formulas for the spherical functions hold, see for instance \cite{Koornwinder} or \cite{Wolf}. Moreover an explicit inversion formula for the Abel transform is known, see \cite{Koornwinder} and \cite{AnkerDunklTheory}.
	
	In the case of $\dim(A)>1$, one must replace the hypergeometric functions by multivariable analogues. These can be handled in a somewhat explicit way with Dunkl theory, see \cite{AnkerDunklTheory} and the references therein. 
\end{remark}
\subsubsection{Compact semisimple Lie groups}
Let $(G, K)$ be a Riemannian symmetric pair with $G$ compact and semisimple (and connected). We denote by $\mathfrak{g}$ the Lie algebra of $G$, by $\mathfrak{k}$ the Lie algebra of $K$ and by $\theta$ the Cartan involution with $G^\theta_0\subset K\subset G^\theta$, where $G^\theta$ is the group of fixed points of $G$ and $G^\theta_0$ its connected component. 

Let $\kappa$ denote the Killing form on $\mathfrak{g}$. Then $-\kappa$ defines a scalar product on $\mathfrak{g}$ which extents uniquely to a complex scalar product $\langle \cdot,\cdot\rangle$ on $\mathfrak{g}_\C\coloneqq \mathfrak{g}\otimes \C$.

Let $\mathfrak{g} = \mathfrak{k}\oplus\mathfrak{p}$ denote the Cartan decomposition with respect to $\theta$ and choose a maximal abelian subspace $\mathfrak{a}\subset \mathfrak{p}$. Let $\Sigma\subset i\mathfrak{a}^*$ denote the set of restricted roots of $\mathfrak{g}_\C$ with respect to $\mathfrak{a}_\C$ and $\Sigma^+$ a choice of positive roots.

Denote the universal covering group of $G$ by $\tilde{G}$. The involution $\theta$ lifts to an involution $\tilde{\theta}$ on $\tilde{G}$ and $\tilde{K} = \tilde{G}^{\tilde{\theta}}$ is connected. 
\begin{theorem}[Helgason, \cite{Helgason2}]
	$PS(\tilde{G}, \tilde{K})$ is in bijection with the set
	\[
	\Lambda^+(\tilde{G}/\tilde{K})\coloneqq \left\{\lambda\in i\mathfrak{a}^*\mid \forall \alpha\in \Sigma^+:\frac{\langle \lambda,\alpha\rangle}{\langle \alpha,\alpha\rangle}\in \Z^+\right\}.
	\]
	More precisely, there is a bijection between $\Lambda^+(\tilde{G}/\tilde{K})$ and the set of equivalence classes of irreducible $\tilde{K}$-spherical representations sending $\lambda\in \Lambda^+(\tilde{G}/\tilde{K})$ to the equivalence class of irreducible representations with highest weight $\lambda$.  
\end{theorem}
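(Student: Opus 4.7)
The plan is to reduce the statement to the classical Cartan--Helgason theorem by first using Peter--Weyl theory for the compact Gelfand pair $(\tilde G, \tilde K)$ and then classifying irreducible $\tilde K$-spherical representations via highest weights. Since $\tilde G$ is compact, every irreducible unitary representation $(\pi_V, V)$ is finite dimensional. Recall that such a representation is called $\tilde K$-spherical if $V^{\tilde K} \neq \{0\}$; when $(\tilde G, \tilde K)$ is a Gelfand pair one furthermore has $\dim V^{\tilde K} \leq 1$. If $v\in V^{\tilde K}$ is a unit vector, then
\[
\varphi_V(g) = \langle \pi_V(g)v, v\rangle
\]
is a bi-$\tilde K$-invariant continuous function which is positive-definite and satisfies Godement's functional equation \eqref{e:Godement_characterization_spherical_functions}, hence lies in $PS(\tilde G, \tilde K)$. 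Conversely, a standard GNS-type argument (as in \cite{vanDijk,Wolf}) shows that every $\varphi\in PS(\tilde G,\tilde K)$ arises this way from some irreducible $\tilde K$-spherical representation $V$, unique up to unitary equivalence, so the assignment $[V]\mapsto \varphi_V$ yields a bijection between equivalence classes of irreducible $\tilde K$-spherical representations of $\tilde G$ and $PS(\tilde G,\tilde K)$.

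The next step is to invoke highest-weight theory. Since $\tilde G$ is compact, connected, and simply connected and semisimple, the Cartan--Weyl theorem classifies equivalence classes of irreducible unitary representations by their highest weights with respect to a choice of maximal torus and positive root system. To mesh this with the noncompact/restricted-root data from the statement, I fix a maximal torus $T\subset \tilde G$ whose Lie algebra contains $i\mathfrak a$, extend $\Sigma^+$ to a system of positive roots of $\mathfrak g_{\mathbb C}$ compatible with $\theta$, and identify the Weyl-chamber picture. Writing $V_\mu$ for the irreducible representation of $\tilde G$ with highest weight $\mu$, the last remaining task is to decide precisely which $\mu$ give a $\tilde K$-spherical $V_\mu$.

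This is the content of the Cartan--Helgason theorem (Theorem 4.1 in \cite{Helgason2}, Ch.~V): for the simply connected pair $(\tilde G, \tilde K)$ with $\tilde K$ connected, $V_\mu$ has a nonzero $\tilde K$-fixed vector if and only if $\mu$ vanishes on the $\theta$-fixed part of the Cartan subalgebra (so $\mu\in i\mathfrak a^*$) and
\[
\frac{\langle \mu,\alpha\rangle}{\langle \alpha,\alpha\rangle}\in \mathbb Z_{\geq 0}\quad\text{for every }\alpha\in \Sigma^+,
\]
i.e.\ exactly when $\mu\in\Lambda^+(\tilde G/\tilde K)$. I would prove this by combining two ingredients: (a) restriction to the torus $\exp(\mathfrak a)$ combined with the Iwasawa/Cartan decomposition shows that any $\tilde K$-fixed vector forces $\mu$ to be real-valued on $\mathfrak a$ and trivial on $i\mathfrak k\cap \mathfrak h$, and imposes the integrality condition via the restricted Weyl group symmetry; (b) for each such $\mu$, one exhibits a $\tilde K$-fixed vector either by a Borel--Weil-type construction on $\tilde G/\tilde K_{\mathbb C}$ or, equivalently, by the averaging argument applied to the highest-weight vector, whose nonvanishing is ensured precisely by the half-integrality conditions above.

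The main obstacle is step (b): producing a $\tilde K$-fixed vector for every dominant weight satisfying the integrality condition, which is the nontrivial direction of Cartan--Helgason. The elegant modern proof uses the Borel--Weil realization of $V_\mu$ as global sections of a line bundle on the flag variety of $\tilde G_{\mathbb C}$ and then invokes the Iwasawa decomposition $\tilde G_{\mathbb C} = \tilde K_{\mathbb C}\cdot B$ (or a suitable Matsuki-type double coset refinement) to construct an explicit $\tilde K$-invariant section. Once this is in place, combining it with the bijection of the first paragraph yields the claimed bijection $PS(\tilde G,\tilde K)\longleftrightarrow \Lambda^+(\tilde G/\tilde K)$, with $\lambda$ corresponding to $\varphi_{V_\lambda}$ as stated.
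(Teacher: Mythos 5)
The paper offers no proof of this statement: it is quoted directly from Helgason's \emph{Groups and Geometric Analysis} (the Cartan--Helgason theorem, combined with the standard correspondence between positive-definite spherical functions and spherical representations), so there is no in-text argument to compare yours against. Your outline follows exactly the route of the cited source and is structurally correct: for a compact Gelfand pair the GNS/Peter--Weyl step identifying $PS(\tilde{G},\tilde{K})$ with equivalence classes of irreducible $\tilde{K}$-spherical representations via $\varphi_V(g)=\langle\pi_V(g)v,v\rangle$ is routine (bi-invariance, positive-definiteness and Godement's functional equation all follow from $\int_{\tilde K}\pi_V(k)\,dm_{\tilde K}(k)$ being the rank-one projection onto $V^{\tilde K}$), and the reduction to deciding which highest weights $\mu$ satisfy $V_\mu^{\tilde K}\neq\{0\}$ is exactly the right move.

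Two caveats concern step (b), which you correctly identify as the crux. First, $\tilde{G}_{\mathbb{C}}=\tilde{K}_{\mathbb{C}}\cdot B$ is false as an equality: $\tilde{K}_{\mathbb{C}}B$ is only the open dense $\tilde{K}_{\mathbb{C}}$-orbit (Matsuki). Openness gives $\dim V_\mu^{\tilde{K}}\leq 1$ essentially for free, but \emph{existence} of a $\tilde{K}_{\mathbb{C}}$-invariant section requires showing it extends across the boundary orbits, and that is precisely where the condition $\langle\mu,\alpha\rangle/\langle\alpha,\alpha\rangle\in\mathbb{Z}_{\geq 0}$ enters. Second, your alternative --- that the nonvanishing of $\int_{\tilde{K}}\pi_\mu(k)v_\mu\,dm_{\tilde{K}}(k)$ ``is ensured precisely by the half-integrality conditions'' --- restates the theorem rather than proving it; Helgason's argument establishes the nonvanishing by passing to the noncompact real form, using its Iwasawa decomposition to write the pairing of the averaged vector with $v_\mu$ as a manifestly positive integral over $K$. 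So as an outline your proposal is faithful to the proof the paper implicitly relies on, but as written it still defers the hard (sufficiency) direction of Cartan--Helgason.
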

For $\lambda\in \Lambda^+(\tilde{G}/\tilde(K))$ let $(\pi_\lambda,V_\lambda)$ denote a fixed irreducible representation with highest weight $\lambda$.
Let $\Lambda^+(G/K)$ denote the set of $\lambda\in \Lambda^+(\tilde{G}/\tilde{K})$ such that $(\pi_{\lambda}, V_\lambda)$ descends to a $K$-spherical representation of $G$. 
\begin{lemma}[Olafsson--Schlichtkrull, \cite{OlafssonSchlichtkrull}]
	There are $\lambda_1,\dots,\lambda_k\in \Lambda^+(\tilde{G}/\tilde{K})$ such that
	 \[\Lambda^+(G/K) = \Z^+\lambda_1\oplus \dots\oplus \Z^+\lambda_k\subset i\mathfrak{a}^*\] with $k=\dim\mathfrak{a}$.
\end{lemma}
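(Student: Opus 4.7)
The plan is to realise $\Lambda^+(G/K)$ as the intersection of $\Lambda^+(\tilde G/\tilde K)$ with the kernel of a homomorphism into a finite abelian group, and then to extract the free generators using the simplicial geometry of the positive restricted Weyl chamber.

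By the preceding theorem of Helgason, $\Lambda^+(\tilde G/\tilde K)$ is the free commutative monoid on fundamental restricted weights $\omega_1,\dots,\omega_k\in i\mathfrak{a}^*$ dual to a chosen set of simple restricted coroots, so every $\lambda\in\Lambda^+(\tilde G/\tilde K)$ has a unique expansion $\lambda=\sum_i n_i\omega_i$ with $n_i\in\Z^+$. Set $Z:=\ker(\tilde G\twoheadrightarrow G)\subseteq Z(\tilde G)$, a finite abelian group. For $\lambda\in\Lambda^+(\tilde G/\tilde K)$, Schur's lemma provides a central character $\chi_\lambda\in\widehat Z$, and since $\pi_{\lambda+\mu}$ sits inside $\pi_\lambda\otimes\pi_\mu$ the assignment $\lambda\mapsto\chi_\lambda$ is the restriction of a group homomorphism $\phi\colon\Lambda\to\widehat Z$ defined on the full restricted weight lattice $\Lambda:=\bigoplus_i\Z\omega_i$. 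Because $\tilde K$ is connected its image in $G$ equals $G_0^\theta$, so the passage from $\tilde K$-spherical representations of $\tilde G$ to $K$-spherical representations of $G$ reduces (up to a finite-order character condition on $K/G_0^\theta$ which is automatic in our setting) to the requirement $\chi_\lambda=\mathbf{1}$. This gives
\[\Lambda^+(G/K)=\Lambda^+(\tilde G/\tilde K)\cap\ker\phi.\]

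For each $i$ let $m_i\in\Z^+$ be the order of $\phi(\omega_i)$ in the finite group $\widehat Z$ and put $\lambda_i:=m_i\omega_i\in\Lambda^+(G/K)$. Since the $\omega_i$ are linearly independent, the submonoid $\bigoplus_{i=1}^k\Z^+\lambda_i\subseteq\Lambda^+(G/K)$ is free of rank $k$. The content of the lemma is the reverse inclusion, which unpacks to the combinatorial assertion that whenever $(n_1,\dots,n_k)\in(\Z^+)^k$ satisfies $\sum_i n_i\phi(\omega_i)=0$ in $\widehat Z$ one already has $m_i\mid n_i$ for every $i$; equivalently, that the cyclic subgroups $\langle\phi(\omega_i)\rangle\subset\widehat Z$ form an internal direct sum.

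Verifying this direct-sum property is the principal obstacle, and it is not formal: a generic finite-index sublattice of $\Z^k$ meets the positive orthant in a monoid with strictly more than $k$ atoms (for instance $\{(a,b):a+b\in 2\Z\}\cap(\Z^+)^2$ has the three atoms $(2,0)$, $(1,1)$, $(0,2)$). Genuine structural information about compact Riemannian symmetric pairs must therefore be used. My plan is to identify $\phi$ explicitly by dualising $Z\hookrightarrow Z(\tilde G)$ and expressing each $\phi(\omega_i)$ as the image of a generator of a cyclic factor of $Z(\tilde G)$ read off the fundamental-group datum of the restricted Dynkin and Satake diagram of $(G,K)$. After reducing to the irreducible case via the product decomposition of compact symmetric pairs, the direct-sum condition follows either from a uniform argument using the Satake diagram or, as a fall-back, by a finite case check against the Cartan classification of irreducible compact symmetric pairs. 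This finite but non-trivial verification is the technical heart of the lemma.
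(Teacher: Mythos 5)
The paper does not prove this lemma at all --- it is quoted from Olafsson--Schlichtkrull \cite{OlafssonSchlichtkrull} --- so there is no internal proof to compare against. Judged on its own terms, your write-up is an accurate structural analysis of the problem rather than a proof. The reduction of $\Lambda^+(G/K)$ to $\Lambda^+(\tilde G/\tilde K)\cap\ker\phi$ is the right first move, and your observation that freeness of such an intersection is \emph{not} formal (the $\{(a,b):a+b\in 2\Z\}$ example, with three atoms) is exactly the point that makes the lemma nontrivial. But the step you yourself label ``the technical heart'' --- that the cyclic subgroups $\langle\phi(\omega_i)\rangle\subset\widehat Z$ form an internal direct sum, equivalently that $\sum_i n_i\phi(\omega_i)=0$ with $n_i\in\Z^+$ forces $m_i\mid n_i$ for all $i$ --- is only announced (``a uniform argument using the Satake diagram or \ldots{} a finite case check''), not executed. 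Since this is precisely the assertion whose failure for a generic finite-index situation you yourself demonstrate, deferring it leaves the argument with no content beyond a (correct, and correctly motivated) reformulation; as submitted it is not a proof.

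Two secondary gaps. First, the passage from $\tilde K$-spherical representations of $\tilde G$ to $K$-spherical representations of $G$ involves not only the central character condition $\chi_\lambda|_Z=\mathbf{1}$ but also triviality of the action of the finite group $K/(G^\theta)_0$ on the spherical line; you dismiss this as ``automatic in our setting,'' but the lemma is stated for arbitrary $K$ with $G^\theta_0\subseteq K\subseteq G^\theta$, so this extra finite-order character condition must be incorporated into $\phi$ (which is possible since that quotient is finite abelian, but it has to be said, and it changes which case check is needed). Second, the reduction to the irreducible case is itself not free of charge: a semisimple $G$ covered by a product can carry a ``diagonal'' kernel, so $\Lambda^+(G/K)$ need not decompose as the direct sum of the factors' monoids a priori; ruling out the bad configurations there is part of the same classification work you postpone.
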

For $\lambda\in \Lambda^+(G/K)$ we denote by $\phi_\lambda$ the spherical function associated to $(\pi_\lambda, V_\lambda)$.
A $K$-invariant distribution on $X=G/K$ is an continuous functional $T$ on $C^\infty(X)$ such that
\[T(f) = T(f(k\cdot))\quad\text{for all }f\in C^\infty(G/K) \text{ and } k\in G.\] 
\begin{theorem}[Olafsson--Schlichtkrull,\cite{OlafssonSchlichtkrull}]
	Let $T$ be a $K$-invariant distribution on $X=G/K$. Then for any $f\in C^\infty(G, K)$ we have
	\[T(f_K) = \sum_{\lambda\in \Lambda^+(G/K)}\dim(V_\lambda)\widehat{f}(\phi_\lambda)T(\phi_{\lambda^*}),\]
	where $\phi_{\lambda^*}$ is defined by $\phi_{\lambda*}(g) = \phi_\lambda(g^{-1})$.
\end{theorem}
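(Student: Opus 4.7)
The plan is to reduce the statement to the Peter--Weyl decomposition of bi-$K$-invariant smooth functions on the compact group $G$, then apply $T$ term by term using its continuity in the $C^\infty$-topology. First, since $G$ is compact and each $\pi_\lambda$ unitary, Schur orthogonality shows that $\{\phi_\lambda\}_{\lambda\in\Lambda^+(G/K)}$ is a complete orthogonal family in $L^2(G,K)$ with $\|\phi_\lambda\|_2^2 = 1/\dim(V_\lambda)$, and moreover $\overline{\phi_\lambda(g)} = \phi_\lambda(g^{-1}) = \phi_{\lambda^*}(g)$. Consequently the $L^2$-inner product $\langle f, \phi_\lambda\rangle$ agrees with the paper's spherical transform $\widehat{f}(\phi_\lambda) = \int f(g)\phi_\lambda(g^{-1})dm_G(g)$, and reindexing via the involution $\lambda\mapsto\lambda^*$ of $\Lambda^+(G/K)$ (together with $\dim(V_\lambda)=\dim(V_{\lambda^*})$) yields the formal identity
\[
f = \sum_{\lambda\in\Lambda^+(G/K)} \dim(V_\lambda)\,\widehat{f}(\phi_\lambda)\,\phi_{\lambda^*}
\]
in $L^2(G,K)$.

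Next I would upgrade this to $C^\infty$-convergence on $G/K$. The key tool is that each $\phi_\lambda$ is a joint eigenfunction of the algebra of $G$-invariant differential operators on $X$; in particular the Laplace--Beltrami operator $\Delta$ acts on $\phi_\lambda$ by a scalar $c_\lambda$ of polynomial growth in $|\lambda|$, while $\dim(V_\lambda)$ is also polynomially bounded by the Weyl dimension formula. Applying powers of $\Delta$ under the spherical transform gives $\widehat{\Delta^N f}(\phi_\lambda) = c_\lambda^N \widehat{f}(\phi_\lambda)$, so for $f \in C^\infty(G,K)$ the coefficients $\widehat{f}(\phi_\lambda)$ decay faster than any polynomial in $|\lambda|$. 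Combined with polynomial bounds for the $C^k$-seminorms of the $\phi_{\lambda^*}$ (which follow from elliptic regularity on the compact manifold $X$ together with the sup-norm bound $\|\phi_\lambda\|_\infty \leq 1$), the series converges absolutely in every $C^k$, hence in the $C^\infty$ topology on $G/K$.

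The conclusion is then immediate: $T$ is by definition continuous on $C^\infty(G/K)$, and the $C^\infty$-convergence of the partial sums to $f_K$ permits interchanging $T$ with the sum, yielding
\[
T(f_K) = \sum_{\lambda\in\Lambda^+(G/K)} \dim(V_\lambda)\,\widehat{f}(\phi_\lambda)\,T(\phi_{\lambda^*}).
\]
The main technical obstacle is the rapid-decay $C^\infty$-convergence of the Peter--Weyl series; once this is in place, the rest is bookkeeping, provided one is careful to match the paper's normalization of the spherical transform (which uses $\omega(g^{-1})$) with the conjugation identities for spherical functions of compact groups.
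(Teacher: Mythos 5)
The paper itself offers no proof of this statement (it is quoted from Olafsson--Schlichtkrull), so there is nothing internal to compare against. Your strategy --- Schur orthogonality and completeness of the spherical functions in $L^2(G,K)$, rapid decay of the coefficients of $f\in C^\infty(G,K)$ via powers of the Laplacian, polynomial bounds on $\dim(V_\lambda)$ and on the $C^k$-seminorms of the $\phi_\lambda$ so that the series converges in $C^\infty$, and finally continuity of $T$ --- is the standard argument and is sound in outline (you should also say explicitly that $\Lambda^+(G/K)\cong (\Z^+)^k$ has a polynomially growing counting function, which is what turns "rapid decay times polynomial growth" into absolute summability).

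The one step that does not work as written is the ``reindexing'' that produces $\phi_{\lambda^*}$. With the paper's normalization $\widehat f(\phi_\lambda)=\int f(g)\phi_\lambda(g^{-1})\,dm_G(g)=\langle f,\phi_\lambda\rangle_{L^2}$, the orthogonal expansion reads
\[
f=\sum_{\lambda}\dim(V_\lambda)\,\widehat f(\phi_\lambda)\,\phi_\lambda ,
\]
and substituting $\lambda\mapsto\lambda^*$ gives $\sum_\lambda\dim(V_\lambda)\,\widehat f(\phi_{\lambda^*})\,\phi_{\lambda^*}$, \emph{not} $\sum_\lambda\dim(V_\lambda)\,\widehat f(\phi_\lambda)\,\phi_{\lambda^*}$. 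The identity you assert is false in general: testing it on $f=\phi_\mu$ with $\mu\neq\mu^*$, the left-hand side is $\phi_\mu$ while the right-hand side is $\phi_{\mu^*}=\overline{\phi_\mu}$. Consequently your argument proves the formula with $T(\phi_\lambda)$ in place of $T(\phi_{\lambda^*})$; the version with $\phi_{\lambda^*}$ requires the other normalization of the transform, namely $\int f\phi_\lambda\,dm_G$ (which is the convention in Olafsson--Schlichtkrull). This is a normalization mismatch already present in the quoted statement rather than a conceptual error, and it is immaterial for the corollary the paper draws, but as a proof of the literal statement your argument has a gap at exactly this point and should either fix the convention or justify why $\widehat f(\phi_\lambda)=\widehat f(\phi_{\lambda^*})$ (which does not hold for general $f$).
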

\begin{corollary}\label{c:compact_ss_Bochner_Schwartz}
	Let $T$ be a bi-$K$-invariant distribution on $G$. Then 
	\[T(f) = \widehat{T}(\widehat{f})\]
	for all $f\in C^\infty(G, K)$.
\end{corollary}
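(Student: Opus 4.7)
The plan is to reduce to the immediately preceding Ólafsson–Schlichtkrull expansion and then identify the atomic measure that appears there with the Plancherel–Godement measure $\widehat{T}$, using the uniqueness clause of Theorem \ref{t:Godement-Plancherel_for_distributions}. Since $G$ is compact, $C^\infty(G,K) = C_c^\infty(G,K)$, so under the standing positive-definiteness assumption (needed for $\widehat{T}$ to be defined) Theorem \ref{t:Godement-Plancherel_for_distributions} provides a regular Borel measure $\widehat{T}$ on $PS(G,K)$, and the preceding theorem identifies $PS(G,K)$ with the countable discrete set $\{\phi_\lambda \mid \lambda \in \Lambda^+(G/K)\}$.

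First I would transfer the problem to $X=G/K$: any bi-$K$-invariant distribution $T$ on $G$ gives rise to a $K$-invariant distribution on $X$ via $h \mapsto T(h^K)$, and under the correspondence $f \leftrightarrow f_K$ the Ólafsson–Schlichtkrull theorem reads
\[T(f) \;=\; \sum_{\lambda \in \Lambda^+(G/K)} \dim(V_\lambda)\,\widehat{f}(\phi_\lambda)\,T(\phi_{\lambda^*}) \qquad \text{for all } f \in C^\infty(G,K).\]
In particular $T$ is determined on $C^\infty(G,K)$ by the numbers $c_\lambda := \dim(V_\lambda)\,T(\phi_{\lambda^*})$.

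The main step is then to show that the atomic measure
\[\mu \;\coloneqq\; \sum_{\lambda \in \Lambda^+(G/K)} c_\lambda \,\delta_{\phi_\lambda}\]
on $PS(G,K)$ coincides with $\widehat{T}$. By the uniqueness part of Theorem \ref{t:Godement-Plancherel_for_distributions}, it suffices to verify the identity $T(f*g^*) = \mu(\widehat{f}\cdot\overline{\widehat{g}})$ for all $f,g \in C^\infty(G,K)$. Using the multiplicativity of the spherical transform, the identity $\widehat{g^*} = \overline{\widehat{g}}$ recorded earlier, and applying the Ólafsson–Schlichtkrull expansion to $f * g^*$, we compute
\[T(f*g^*) \;=\; \sum_{\lambda} \dim(V_\lambda)\,\widehat{f}(\phi_\lambda)\,\overline{\widehat{g}(\phi_\lambda)}\,T(\phi_{\lambda^*}) \;=\; \mu\bigl(\widehat{f}\cdot\overline{\widehat{g}}\bigr),\]
which yields $\widehat{T}=\mu$. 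Substituting back into the expansion gives $T(f)=\widehat{T}(\widehat{f})$ on the nose.

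I do not expect any serious obstacle; the only bookkeeping subtlety is keeping track of the involution (the interchange of $\phi_\lambda$ and $\phi_{\lambda^*}$) and verifying that the above rearrangement of the double sum is legitimate, which follows from absolute convergence since $\widehat{f}$ and $\widehat{g}$ are Schwartz-class in the discrete variable $\lambda$ (they arise as Fourier coefficients of smooth functions on the compact manifold $X$), while $\lambda\mapsto \dim(V_\lambda)\,T(\phi_{\lambda^*})$ grows at most polynomially because $T$ is a distribution on a compact manifold and the $\phi_{\lambda^*}$ and their derivatives admit polynomial bounds in $\lambda$.
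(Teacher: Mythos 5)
Your proposal is correct and matches the paper's (implicit) argument: the corollary is deduced from the \'Olafsson--Schlichtkrull expansion by recognizing the resulting atomic measure $\sum_\lambda \dim(V_\lambda)T(\phi_{\lambda^*})\delta_{\phi_\lambda}$ as the Plancherel--Godement measure $\widehat{T}$ via the uniqueness clause of Theorem \ref{t:Godement-Plancherel_for_distributions}, using $C^\infty(G,K)=C_c^\infty(G,K)$ on the compact group $G$. Your reading that positive-definiteness is tacitly assumed (so that $\widehat{T}$ is defined and the candidate atomic measure is positive) is the right one, and your convergence bookkeeping is sound.
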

\begin{remark}
	A particularly interesting case occurs in this family of examples when $G=SO(n+1)$ and $K=SO(n)$. In this case $G/K$ can be identified with the $n$-sphere $\mathbb{S}^n=\partial B(0,1)\subset \R^{n+1}$ as $SO(n+1)$ acts transitively on $\mathbb{S}^n$ and $e_1=(1,0,\dots, 0)^T$ is stabilized by a subgroup isomorphic to $SO(n)$. 
	
	We will quickly review the Haar measure and spherical functions for the pair $(SO(n+1), SO(n))$, see \cite{AnkerDunklTheory} and references therein for more information.
	
	Let $\theta_1$ denote the angle between $x$ and $e_1\coloneqq (1,0,\dots, 0)^\top$.
	We can identify bi-$SO(n)$-invariant functions with $SO(n)$-invariant functions on $\mathbb{S}^n$.
	With respect to the usual spherical coordinates these functions only depend on $\cos(\theta_1)$.
	Integrals of radial measurable functions with respect to the Haar measure are then given by
	\[\int f(g)dm_{SO(n+1)}(g) = 2\frac{\pi^{n/2}}{\Gamma(n/2)}\int_0^\pi f_{SO(n)}(\cos(\theta_1))\sin(\theta_1)^{n-1}d\theta_1\]
	The spherical functions on $\mathbb{S}^n$ are of the form
	\begin{align*}
	\phi_\lambda(\cos(\theta_1)) = \frac{\lambda!(n-2)!}{(\lambda+n-2)!}C_\lambda^{(\frac{n-2}{2})}(\cos(\theta_1))	=\frac{\lambda!}{(\frac{n}{2})_\lambda}P_\lambda^{(\frac{n}{2}-1,\frac{n}{2}-1)}(\cos(\theta_1))
	\end{align*}
	where $\lambda\in \mathbb{N}$. Here the $C_k^{(m)}$ denote the Gegenbauer polynomials, the $P_\lambda^{(\alpha,\beta)}$ denote the Jacobi polynomials and $(m)_k$ denotes the falling Pochhammer symbol. As in the case of the spherical functions for hyperbolic space they can also be expressed in terms of the Gauss hypergeometric function:
	\[\phi_\lambda(\cos(\theta_1)) = ~_2F_1(-\lambda, \lambda+n-1;n/2;\sin(\theta_1)^2).\] 
\end{remark}

\subsection{Schwartz-like function spaces}
Assume in this subsection that $G$ is a Lie group.

\begin{definition}\label{d:Schwartz-like}
	We say that a topological $*$-subalgebra $\mathcal{S}(G,K)\subset L^1(G,K)\cap C(G)$ (with a possibly finer topology) containing $C_c^\infty(G,K)$ is \emph{Schwartz-like}, if 
	\begin{enumerate}[(i)]
		\item for every $f\in \mathcal{S}(G, K)$ without compact support there is a sequence $(g_n)_{n\geq 1}$ in $C_c^\infty(G,K)$ such that \begin{enumerate}[a)]
			\item $g_nf\in C_c^\infty(G,K)$ for all $n\in \N$,
			\item $g_nf\to f$ in $\mathcal{S}(G,K)$,
			\item $g_n$ takes values in $[0,1]$,
			\item $g_n(e)=1$.
		\end{enumerate}
		\item $\widehat{f}\in L^1(\widehat{\mu})$ for all $f\in \mathcal{S}(G,K)$ and spherical positive-definite Radon measures $\mu$ on $G$.
		\item For any positive-definite Radon measure $\mu$ there is a unique continuous linear functional $T_\mu:\mathcal{S}(G,K)\to \C$ such that
		\[T_\mu(f) = \int \widehat{f}d\widehat{\mu}\]
		for all $f\in \mathcal{S}(G,K)$ and $T_\mu|_{C_c^\infty(G,K)} = \mu|_{C_c^\infty(G,K)}$.
	\end{enumerate} 
\end{definition}
\begin{proposition}\label{e:examples_Schwartz_like}
	The following algebras are Schwartz-like:
	\begin{enumerate}[(i)]
		\item $C_c(G,K)^2=\lspan\{f*g\mid f,g\in C_c(G,K)\}$ for a general Lie group Gelfand pair.
		\item $\mathscr{S}(\R^n)$.
		\item $\mathscr{S}^1(G,K)$, where $G$ is a semisimple Lie group with finite center and no compact factors, $K$ a maximal compact subgroup.
		\item $C^\infty(G,K)$ for Riemannian symmetric pairs $(G,K)$ of compact type.
		\item $\mathscr{S}(\mathcal{H}_n\rtimes U(n), U(n))$.
	\end{enumerate}
\end{proposition}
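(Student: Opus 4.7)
The strategy is to reduce conditions (ii) and (iii) of Schwartz-likeness to the Bochner--Schwartz theorems already established earlier in the section, while condition (i) is handled by explicit cutoff constructions. For case (i) of the proposition, where $\mathcal{S}(G,K) = C_c(G,K)^2$, condition (i) of Schwartz-likeness is vacuous since every element has compact support. Condition (ii) follows by writing $f = \sum_i h_i * h_i'$ with $h_i, h_i' \in C_c(G,K)$ and applying Cauchy--Schwarz to $\widehat{f} = \sum_i \widehat{h_i}\widehat{h_i'}$, whose factors lie in $L^2(\widehat{\mu})$ by Theorem~\ref{t:Godement-Plancherel_for_measures}. For condition (iii) one sets $T_\mu(f) \coloneqq \widehat{\mu}(\widehat{f})$ and uses $\widehat{h^*} = \overline{\widehat{h}}$ to rewrite Theorem~\ref{t:Godement-Plancherel_for_measures} as $\mu(h*h') = \widehat{\mu}(\widehat{h}\widehat{h'})$, which gives agreement with $\mu$ by linearity.

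For cases (ii)--(v), any positive-definite Radon measure $\mu$ restricts by Lemma~\ref{l:lf_measure_are_distributions} to a positive-definite distribution on $C_c^\infty(G)$ whose distributional spherical transform equals $\widehat{\mu}$ by Corollary~\ref{c:distributional_plancherel-godement_is_measure_plancherel_godement}. The corresponding Bochner--Schwartz theorem (Theorem~\ref{t:euclidean_Bochner_Schwartz}, Theorem~\ref{t:heisenberg_bochner_schwartz}, Theorem~\ref{t:noncompact_ss_bochner_schwartz}, or Corollary~\ref{c:compact_ss_Bochner_Schwartz}) then supplies a unique continuous extension $\tilde T : \mathcal{S}(G,K) \to \C$ with $\tilde T f = \int \widehat{f}\,d\widehat{\mu}$. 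This forces $\widehat{f} \in L^1(\widehat{\mu})$, giving condition (ii), and setting $T_\mu \coloneqq \tilde T$ gives condition (iii); agreement on $C_c^\infty(G,K)$ is built into each Bochner--Schwartz statement, either explicitly or via uniqueness of the extension.

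It remains to construct the cutoff sequences of condition (i). Case (iv) is vacuous since $G$ is compact. In case (ii) take $g_n(x) \coloneqq \eta(x/n)$ for a fixed radial $\eta \in C_c^\infty(\R^n)$ with $\eta \equiv 1$ on $B(0,1)$, $0 \leq \eta \leq 1$, and $\eta(0) = 1$. In case (v), use the dilations: define $g_n \coloneqq \eta \circ D_{1/n}$ for a bi-$U(n)$-invariant $\eta \in C_c^\infty(\mathcal{H}_n \rtimes U(n), U(n))$ equal to $1$ on a neighborhood of $e$; since $D_{1/n}$ fixes $e$ and commutes with the $U(n)$-action, $g_n$ has the required properties. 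In case (iii), use the Cartan decomposition $G = K \exp(\mathfrak{p}) K$ to choose bi-$K$-invariant bumps $\eta_n \in C_c^\infty(G,K)$ with $\eta_n \equiv 1$ on $\{g : \sigma(g) \leq n\}$, $\eta_n(e) = 1$, and with left- and right-invariant derivatives bounded uniformly in $n$.

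The main obstacle is verifying $g_n f \to f$ in the relevant Schwartz topology. Cases (ii) and (v) reduce to routine Leibniz-and-dilation estimates. Case (iii) is the most delicate: one must show $q_{D,E,m,1}(g_n f - f) \to 0$ for every $D, E \in \mathcal{U}(\mathfrak{g})$ and $m \in \N_0$. The Leibniz rule produces finitely many terms in which derivatives of $\eta_n$ are supported on $\{g : \sigma(g) \geq c n\}$, and one then plays the rapid decay in $\sigma$ of elements of $\mathscr{S}^1(G,K)$ against the growth $\Xi(g)^{-2} \asymp e^{2\rho(H(g))}$ to conclude. The need for this control to hold uniformly across all $D$, $E$ and $m$ is where the genuine technical work lies.
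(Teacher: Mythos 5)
Your plan is correct and follows essentially the same route as the paper: conditions (ii) and (iii) of Schwartz-likeness are reduced to the Plancherel--Godement theorem in case (i) and to the respective Bochner--Schwartz theorems (Theorems \ref{t:euclidean_Bochner_Schwartz}, \ref{t:noncompact_ss_bochner_schwartz}, \ref{t:heisenberg_bochner_schwartz} and Corollary \ref{c:compact_ss_Bochner_Schwartz}) via Corollary \ref{c:distributional_plancherel-godement_is_measure_plancherel_godement} in the other cases, with condition (i) handled by cutoffs (vacuous for (i) and (iv)). The only difference is that where you sketch the cutoff constructions and the convergence estimates directly --- most notably the Leibniz/decay argument for $\mathscr{S}^1(G,K)$ --- the paper simply cites the relevant density results in Varadarajan--Gangolli and Grafakos, so your outline is a legitimate (if still-to-be-completed) substitute for those references.
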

\begin{proof}[Proof]
	\begin{enumerate}[(i)]
		\item As every function in $C_c(G, K)^2$ has compact support, Condition (i) holds vacuously. Conditions (ii) and (iii) follow directly from the Godement-Plancherel theorem \ref{t:Godement-Plancherel_for_measures}.
		\item This follows from the Bochner-Schwartz theorem together with Corollary \ref{c:distributional_plancherel-godement_is_measure_plancherel_godement} and \cite[Theorem 1.8.7]{GrafakosTextbook}.
		\item This follows from Barkers spherical Bochner-Schwartz theorem \ref{t:noncompact_ss_bochner_schwartz} together with Corollary \ref{c:distributional_plancherel-godement_is_measure_plancherel_godement} and the remarks after \cite[Definition 7.8.4]{VaradarajanGangolli} together with \cite[Lemma 6.1.7]{VaradarajanGangolli}.
		\item This follows directly from Corollary \ref{c:compact_ss_Bochner_Schwartz}.
		\item The existence of a sequence $(g_n)_{n\geq 1}$ follows directly from the Euclidean case, by the definition of Schwartz space. Now Theorem \ref{t:heisenberg_bochner_schwartz} together with Corollary \ref{c:distributional_plancherel-godement_is_measure_plancherel_godement} implies that $\mathscr{S}(\mathcal{H}_n\rtimes U(n), U(n))$ is Schwartz-like.\qedhere
	\end{enumerate}
\end{proof}

\section{Point processes in proper commutative spaces}\label{s:PointProcesses}
In this section we first recall the notion of point processes. For this we introduce a space measures and measure valued random variables. We then define the intensity and autocorrelation of point processes.
\subsection{Point processes}
We will now give a quick exposition of point processes theory in $X$ and use the opportunity to fix our notation for point process theory.
Assume that $(G,K)$ is a Lie group Gelfand pair and that $X=G/K$ is equipped with a complete, proper, continuous and $G$-invariant metric $d$. We will freely use the notation of Subsection \ref{s:general assumptions}.
\begin{definition}
	We say that a set $P\subset X$ is locally finite, if $\#(P\cap C)<\infty$ for all bounded Borel sets $C\subset X$.
	Let \[\mathcal{N}^*(X) = \left\{\sum_{x\in P}\delta_x\setdiv P\subset X\text{ locally finite}\right\}.\]
	For $\mu=\sum_{x\in P}\delta_x\in \mathcal{N}^*(X)$ we denote by $\supp(\mu)$ the support of $\mu$ (i.e. $P$).
	We equip the space $\mathcal{N}^*(X)$ with the weak-$*$ topology with respect to $C_c(X)$.
\end{definition}
Note that $G$ acts measurably on $\mathcal{N}^*(X)$ by pushforwards.
As the metric space $X$ is complete, proper and separable, the weak-$*$ topology and the weak-$\#$ topology,  defined in  \cite[Definition 9.1.II]{DaleyVere-Jones2}, agree by \cite[A.2.6.1]{DaleyVere-Jones1}. Thus we have:
\begin{proposition}[Prohorov, {\cite[A.2.6.III]{DaleyVere-Jones1}}]\label{prop:N_star_std_Borel}
	The space $\mathcal{N}^*(X)$ is a standard Borel space, and if $\mathcal{B}_b(X)$ denotes the set of all bounded Borel subsets in $X$, then $\mathcal{B}(\mathcal{N}^*(X))$ is generated by the family of maps $(\pi_A)_{A\in \mathcal{B}_b(X)}$, given by
	\[\pi_A: \mathcal{N}^*(X)\to [0,\infty),\quad \pi_A(\mu)\coloneqq \mu(A).\]
\end{proposition}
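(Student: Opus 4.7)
The plan is to identify $\mathcal{N}^*(X)$ with a Borel subset of the Polish space of locally finite Radon measures on $X$, and then verify by a Dynkin class argument that the family $\{\pi_A\}_{A\in \mathcal{B}_b(X)}$ generates the Borel $\sigma$-algebra. Since the metric $d$ is complete and proper, $X$ is $\sigma$-compact and second countable, hence Polish. By standard theory (as in Kallenberg's \emph{Random Measures}, or the Daley--Vere-Jones reference cited), the space $\mathcal{M}_{\mathrm{lf}}(X)$ of locally finite Borel measures on $X$, equipped with the vague topology (weak-$*$ against $C_c(X)$), is Polish; and the weak-$*$ topology on $\mathcal{N}^*(X)$ fixed in the text coincides with the subspace topology inherited from $\mathcal{M}_{\mathrm{lf}}(X)$.

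First I would show that $\mathcal{N}^*(X) \subset \mathcal{M}_{\mathrm{lf}}(X)$ is Borel. A locally finite measure $\mu$ lies in $\mathcal{N}^*(X)$ iff it is purely atomic with every atom of mass exactly $1$. Fixing a countable base $(B_n)_n$ of bounded open balls with rational radii centered at a countable dense set, the conditions ``$\mu(B_n) \in \mathbb{Z}_{\geq 0}$ for every $n$'' and ``no atom of mass greater than $1$'' (the latter expressible as $\sup_{n}\inf_{x \in B_n}\mu(B_n) \leq 1$ in a suitable sense via the $\pi_{B_n}$) cut out a countable intersection of Borel subsets of $\mathcal{M}_{\mathrm{lf}}(X)$. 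Hence $\mathcal{N}^*(X)$ is Borel in the Polish space $\mathcal{M}_{\mathrm{lf}}(X)$, and so carries a standard Borel structure.

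For the generation claim, I would first verify that each $\pi_A$ is Borel on $\mathcal{N}^*(X)$. When $U \subset X$ is bounded and open, $\pi_U$ is lower semicontinuous in the vague topology by a Portmanteau-type argument, hence Borel. I would then extend this to arbitrary bounded Borel sets via a $\pi$-$\lambda$ argument: the collection $\{A \in \mathcal{B}_b(X) : \pi_A \text{ is Borel}\}$ is closed under proper differences and countable disjoint unions (using that $\mu(A)<\infty$ for every $\mu \in \mathcal{N}^*(X)$ and $A \in \mathcal{B}_b(X)$), so it forms a $\lambda$-system containing the $\pi$-system of bounded open sets, which generates $\mathcal{B}_b(X)$. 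Conversely, the vague topology on $\mathcal{M}_{\mathrm{lf}}(X)$ is generated by the maps $\mu \mapsto \mu(f)$ for $f$ in a countable dense subset of $C_c(X)$, and uniform approximation of such $f$ by simple functions supported in a fixed bounded set expresses $\mu(f)$ as a pointwise limit of finite linear combinations of $\pi_{A}$'s, whence measurable with respect to the $\sigma$-algebra they generate.

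The main obstacle is precisely the $\pi$-$\lambda$ step: one has to verify closure of the $\lambda$-class under countable disjoint unions without running into $\infty - \infty$ issues when passing to complements within a bounded Borel universe. Both this closure and the Borel measurability of $\mathcal{N}^*(X) \subset \mathcal{M}_{\mathrm{lf}}(X)$ rely crucially on the local finiteness condition, which guarantees that $\mu(A)$ is always finite on bounded Borel $A$. Once these finiteness considerations are handled, the remainder of the proof is routine measure-theoretic bookkeeping.
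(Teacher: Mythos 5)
The paper offers no proof of this proposition at all: it is quoted verbatim from Daley--Vere-Jones (A2.6.III together with Chapter~9 there), after the remark that completeness, properness and separability of $(X,d)$ make the weak-$*$ and weak-$\#$ topologies coincide. Your argument is essentially the standard proof found in that reference (and in Kallenberg), so there is no conflict of method — you are reconstructing exactly what the citation delegates. Two steps in your sketch deserve more care than you give them, though both are handled by standard lemmas. First, knowing $\mu(B_n)\in\Z_{\geq 0}$ for a countable base of balls does not by itself propagate to all bounded Borel sets by a naive $\pi$--$\lambda$ argument, because finite intersections of balls are not balls; the usual fix is to work with a dissecting system (nested finite partitions of each bounded set refining to points), as in Daley--Vere-Jones Proposition 9.1.IV--V. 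Second, your formula ``$\sup_n\inf_{x\in B_n}\mu(B_n)\leq 1$'' for simplicity does not parse as written; the Borel expression for ``all atoms have mass $\leq 1$'' is again via the dissecting system, e.g.\ $\lim_n\sum_k\bigl(\mu(A_{n,k})\wedge 1\bigr)=\mu(A)$ for each bounded $A$, which is a pointwise limit of Borel functions of the $\pi_{A_{n,k}}$. The remainder of your argument — lower semicontinuity of $\pi_U$ for bounded open $U$, the $\lambda$-system within a fixed bounded open universe to avoid $\infty-\infty$, and the converse inclusion via simple-function approximation of $f\in C_c(X)$ — is correct and is the standard route.
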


\begin{definition}
	Let $(\Omega, \mathbb{P})$ be a probability space equipped with a measurable $G$-action such that $\mathbb{P}$ is $G$-invariant.
	A \emph{stationary point process} in $X$ is a $G$-equivariant map
	\[\Lambda: \Omega\to \mathcal{N}^*(X),\quad \omega\mapsto \Lambda_\omega.\]
\end{definition}
This definition is equivalent to the usual definition of stationary point process. In the sequel all point processes will be stationary, even if not explicitly stated.

By Proposiiton \ref{prop:N_star_std_Borel} any measurable set $B\subset X$ defines a random  variable
\[\Lambda(B):\Omega\to \R,\quad \omega\mapsto \sum_{x\in \supp(\Lambda_\omega)}\delta_x(B) = \#(B\cap \supp(\Lambda_\omega)).\]
If $f:G\to \C$ is measurable, bounded with bounded support, the \emph{linear statistic} of $f$ is the $\C$-valued random variable $\Lambda(f)$ given by 
\[\Lambda(f): \Omega\to \R,\quad \omega\mapsto \Lambda_\omega(f) = \int_X f(x)d\Lambda_\omega(x) = \sum_{x\in \supp{\Lambda_\omega}}f(x).\]

A point process $\Lambda$ in $X$ is called \emph{locally $L^p$} (for $1\leq p\leq \infty$) if for any bounded measurable set $B\subset X$ the $\R$-valued random variable $\Lambda(B)$ is in $L^p$. 
If $\Lambda$ is locally $L^2$, then we also call $\Lambda$ \emph{locally square integrable}.

We will see in Section \ref{s:deterministic_and_random_density} below that all point processes associated to random sphere packings are locally $L^\infty$, hence locally square integrable.

\subsection{Moments and Autocorrelation}
\begin{lemma}\label{l:palm_formula}
	Let $\Lambda$ be a stationary locally $L^1$ point process in $X$. Then there is a constant $i(\Lambda)\geq0$ such that
	\[\mathbb{E}(\Lambda(B)) = i(\Lambda)m_X(B)\]
	for all Borel sets $B$.
\end{lemma}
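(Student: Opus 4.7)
The plan is to define the \emph{intensity measure} $\mu_\Lambda$ of $\Lambda$ by
\[\mu_\Lambda(B)\coloneqq \mathbb{E}[\Lambda(B)]\qquad (B\in \mathcal{B}(X)),\]
to check that it is a $G$-invariant Borel measure on $X$, and then to invoke the uniqueness statement from Subsection \ref{s:general assumptions} (any $G$-invariant Borel measure on $X=G/K$ is a nonnegative multiple of $m_X$) to conclude.

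First I would verify that $\mu_\Lambda$ is well-defined and a Borel measure. For any $B\in\mathcal{B}(X)$, Proposition \ref{prop:N_star_std_Borel} guarantees that $\Lambda(B):\Omega\to[0,\infty]$ is measurable, so $\mathbb{E}[\Lambda(B)]\in[0,\infty]$ makes sense. Nonnegativity and $\mu_\Lambda(\emptyset)=0$ are immediate. Countable additivity follows from the monotone convergence theorem: for pairwise disjoint $(B_n)_{n\ge 1}$, the partial sums $\sum_{n\le N}\Lambda(B_n)$ increase pointwise in $\omega$ to $\Lambda\bigl(\bigsqcup_n B_n\bigr)$, and exchanging the limit with the expectation yields additivity of $\mu_\Lambda$.

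Next I would check that $\mu_\Lambda$ is locally finite and $G$-invariant. Local finiteness: for any bounded Borel set $B$, the assumption that $\Lambda$ is locally $L^1$ gives $\mathbb{E}[\Lambda(B)]=\mu_\Lambda(B)<\infty$. Since $X$ is proper, every compact set lies in some bounded Borel set, so $\mu_\Lambda$ is a Radon measure after the standard regularity argument. For $G$-invariance, let $g\in G$ and $B\in\mathcal{B}(X)$. Equivariance of $\Lambda$ means $\Lambda_{g\omega}=g_*\Lambda_\omega$, so
\[\Lambda_{g\omega}(B)=(g_*\Lambda_\omega)(B)=\Lambda_\omega(g^{-1}B).\]
Using the $G$-invariance of $\mathbb{P}$ and changing variables $\omega\mapsto g^{-1}\omega$ in the expectation gives
\[\mu_\Lambda(gB)=\mathbb{E}[\Lambda(gB)]=\mathbb{E}[\Lambda_\omega(gB)]=\mathbb{E}[\Lambda_{g^{-1}\omega}(gB)]=\mathbb{E}[\Lambda_\omega(B)]=\mu_\Lambda(B).\]

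Finally, by the uniqueness of $G$-invariant Borel measures on $X$ recorded in Subsection \ref{s:general assumptions}, there exists a constant $i(\Lambda)\ge 0$ with $\mu_\Lambda=i(\Lambda)\,m_X$, which is the claimed formula. The only mildly delicate point is ensuring measurability of $\omega\mapsto \Lambda_\omega(B)$ for arbitrary Borel $B$ rather than just bounded ones, but this is precisely the content of Proposition \ref{prop:N_star_std_Borel}; I do not expect any further obstacles.
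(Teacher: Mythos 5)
Your proposal is correct and takes essentially the same route as the paper: define the intensity measure $B\mapsto\mathbb{E}[\Lambda(B)]$, check it is a $G$-invariant Borel measure (the paper only records the invariance computation), and conclude by the uniqueness of $G$-invariant Borel measures on $X=G/K$; your extra verifications of countable additivity and local finiteness are welcome but not part of the paper's terse argument. The only quibble is an off-by-inverse slip in your displayed chain: with the convention $\Lambda_{g\omega}=g_*\Lambda_\omega$, i.e.\ $\Lambda_{g\omega}(A)=\Lambda_\omega(g^{-1}A)$, the correct substitution is $\omega\mapsto g\omega$, giving $\mathbb{E}[\Lambda_\omega(gB)]=\mathbb{E}[\Lambda_{g\omega}(gB)]=\mathbb{E}[\Lambda_\omega(B)]$, whereas $\Lambda_{g^{-1}\omega}(gB)$ would equal $\Lambda_\omega(g^2B)$.
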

\begin{proof}
	The map $B\mapsto \mathbb{E}[\Lambda(B)]$ is a $G$-invariant Borel measure,
	as 
	\begin{align*}
		\mathbb{E}[\Lambda(gB)] &= \int_\Omega \Lambda_\omega(gB)d\mathbb{P}(\omega) = \int_\Omega \Lambda_{g^{-1}\omega}(B)d\mathbb{P}(\omega)\\ 
		&= \int \Lambda_\omega(B)d\mathbb{P}(\omega) = E[\Lambda(B)].
	\end{align*}
	Hence our claim holds.
\end{proof}
\begin{definition}
	The constant $i(\Lambda)$ above is called the \emph{intensity} of $\Lambda$.
\end{definition}

The existence of approximate identities in $C_c(G, K)$ directly implies the following lemma.
\begin{lemma}\label{l:uniqueness_invariant_mesures}
	The set $\{f*g\mid f,g\in C_c(G,K)\}$ is dense in $C_c(G,K)$. 
	In particular, if $\mu,\nu$ are two bi-$K$-invariant Borel measures on $G$ such that $\mu(f*g) = \nu(f*g)$ for all $f,g\in C_c(G,K)$, then $\mu = \nu$.
\end{lemma}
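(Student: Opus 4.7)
The plan is to prove the density claim by convolving with a bi-$K$-invariant approximate identity, and then deduce the uniqueness statement by combining density with bi-$K$-invariance.

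\textbf{Step 1: A bi-$K$-invariant approximate identity.} Pick any sequence $(\varphi_n)_{n\geq 1}$ in $C_c(G)$ of nonnegative functions with $\int_G \varphi_n\, dm_G = 1$ and $\supp(\varphi_n)$ shrinking to $\{e\}$. Set $e_n \coloneqq \varphi_n^{\natural}$, which lies in $C_c(G,K)$. Because $K$ is compact and $K\cdot\supp(\varphi_n)\cdot K$ still shrinks to $K$ (hence to a neighbourhood of $e$ of arbitrarily small $K$-biinvariant diameter), $(e_n)$ is an approximate identity for the convolution algebra $C_c(G)$. In particular, for any $f\in C_c(G,K)$, the convolutions $e_n*f$ belong to $C_c(G,K)$, have support contained in a fixed compact set (say $K\cdot\supp(\varphi_1)\cdot\supp(f)$) for all large $n$, and converge to $f$ uniformly on $G$. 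This is the standard argument using uniform continuity of $f$ on a compact neighborhood of $\supp(f)$; the only new point is that everything can be kept bi-$K$-invariant.

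\textbf{Step 2: Density.} Since $e_n\in C_c(G,K)$ and $f\in C_c(G,K)$, we have $e_n*f\in\{f_1*f_2\mid f_1,f_2\in C_c(G,K)\}$. Step 1 shows that $e_n*f\to f$ in the inductive limit topology of $C_c(G,K)$ (uniform convergence with uniformly bounded support), which proves the first assertion.

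\textbf{Step 3: Uniqueness of measures.} Let $\mu,\nu$ be bi-$K$-invariant Radon measures on $G$ with $\mu(f*g)=\nu(f*g)$ for all $f,g\in C_c(G,K)$. By Step 2, for every $h\in C_c(G,K)$ we may approximate $h$ by $e_n*h\in\{f*g\}$ uniformly on a common compact support. Since $\mu$ and $\nu$ are Radon measures, this yields
\[
\mu(h)=\lim_{n\to\infty}\mu(e_n*h)=\lim_{n\to\infty}\nu(e_n*h)=\nu(h),
\]
so $\mu$ and $\nu$ agree on $C_c(G,K)$. Now use bi-$K$-invariance of $\mu$ and $\nu$: for any $f\in C_c(G)$, Fubini together with the translation invariance of $\mu$ under $K\times K$ gives $\mu(f^\natural)=\mu(f)$, and likewise for $\nu$. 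Since $f^\natural\in C_c(G,K)$, we conclude
\[
\mu(f)=\mu(f^\natural)=\nu(f^\natural)=\nu(f)\quad\text{for all }f\in C_c(G),
\]
so $\mu=\nu$ by the Riesz representation theorem.

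\textbf{Main obstacle.} The only nontrivial point is the construction of a bi-$K$-invariant approximate identity that still satisfies $e_n*f\to f$ in the $C_c$-topology rather than merely in $L^1$; this requires checking that $K$-averaging does not destroy the shrinking-support property, which follows from compactness of $K$. Everything else is a routine combination of Radon-measure continuity and the bi-$K$-invariance trick $\mu(f)=\mu(f^\natural)$.
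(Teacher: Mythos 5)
Your proof is correct and takes essentially the same route the paper intends: the paper dispenses with the lemma in one sentence by invoking the existence of approximate identities in $C_c(G,K)$, and your argument is precisely the fleshed-out version of that remark (bi-$K$-invariant approximate identity via $K$-periodization, convergence $e_n*f\to f$ with uniformly bounded supports, then continuity of Radon measures plus the identity $\mu(f)=\mu(f^\natural)$ to conclude $\mu=\nu$ on all of $C_c(G)$). The only point worth polishing is that $\supp(e_n)\subset K\,\supp(\varphi_n)\,K$ does not shrink to $\{e\}$ but only to $K$, so the convergence $e_n*f\to f$ really rests on the left-$K$-invariance of $f$ absorbing the extra $K$-factors; your phrasing gestures at this but the mechanism deserves to be stated explicitly.
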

Let \[\mathcal{L}^\infty_{\mathrm{bnd}}(G)\coloneqq \{f:G\to \C\mid f\text{ measurable, bounded with bounded support}\}\] and note that this is a convolution algebra with involution.
\begin{proposition}[Björklund--Byléhn,\cite{MichaelMattiasArXiv}]\label{p:autocorrelation_and_moments}
	Let $\Lambda$ be a locally square integrable point process in $X$. For all measurable $b:X\to [0,\infty)$ with $m_X(b) = 1$ and bounded support and Borel sections $\sigma:X\to G$, we have
	\[\mathbb{E}\left[\int \int f^**g(\sigma(x)^{-1}\sigma(y))b(x)d\Lambda(y)d\Lambda(x)\right] = \mathbb{E}[\overline{\Lambda(f\circ \sigma)}\Lambda(g\circ \sigma)]\]
	and 
	\[\mathbb{E}\left[\int \int f^**g(\sigma(x)^{-1}\sigma(y))b(x)d\Lambda(y)d\Lambda(x)\right] -i(\Lambda)^2m_G(f^**g) = \mathrm{Cov}(\Lambda(f\circ \sigma),\Lambda(g\circ \sigma))\]
	for all right-$K$-invariant $f,g\in \mathcal{L}_{\mathrm{bnd}}^\infty(G)$.
\end{proposition}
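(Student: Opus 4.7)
The plan is to derive the second identity from the first by a direct intensity computation, and to prove the first identity by interpreting it as a property of the second moment measure of $\Lambda$, which I would then handle via a disintegration of $G$-invariant Radon measures on $X\times X$.

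For the reduction, Lemma~\ref{l:palm_formula} together with the Weil formula and right-$K$-invariance of $f$ gives
\[
	\mathbb{E}[\Lambda(f\circ\sigma)] = i(\Lambda)\int_X f(\sigma(x))\,dm_X(x) = i(\Lambda)\,m_G(f),
\]
and likewise for $g$. A standard Fubini argument using translation invariance of $m_G$ shows that $m_G(f^**g) = m_G(f^*)\,m_G(g) = \overline{m_G(f)}\,m_G(g)$, so $\overline{\mathbb{E}[\Lambda(f\circ\sigma)]}\,\mathbb{E}[\Lambda(g\circ\sigma)] = i(\Lambda)^2\,m_G(f^**g)$. Hence, once the first identity is known, subtracting $i(\Lambda)^2\,m_G(f^**g)$ from both sides yields the covariance form of the second identity via $\operatorname{Cov}(X,Y) = \mathbb{E}[\overline{X}Y] - \overline{\mathbb{E}[X]}\mathbb{E}[Y]$.

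For the first identity I would consider the second moment measure $M_2$ on $X\times X$ defined by $M_2(A\times B) = \mathbb{E}[\Lambda(A)\Lambda(B)]$. Local square-integrability of $\Lambda$ makes $M_2$ Radon, and stationarity makes $M_2$ invariant under the diagonal $G$-action. Both sides of the first identity are integrals against $M_2$, so the proof reduces to the following lemma: for every $G$-invariant Radon measure $\nu$ on $X\times X$, every right-$K$-invariant $f,g\in\mathcal{L}^\infty_{\mathrm{bnd}}(G)$, and every $b$ as in the proposition,
\[
	\int\!\!\int (f^**g)(\sigma(x)^{-1}\sigma(y))\, b(x)\, d\nu(x,y) = \int\!\!\int \overline{f(\sigma(x))}\, g(\sigma(y))\, d\nu(x,y).
\]
Since $G$ acts transitively on $X$ with stabilizer $K$, the orbit space of the diagonal $G$-action on $X\times X$ is in bijection with $K\backslash X$, and every such $\nu$ admits a unique representation $\nu = \nu_\mu$ with
\[
	\nu_\mu(\Phi) = \int_G\!\int_X \Phi(hx_0, hu)\, dm_G(h)\, d\mu(u)
\]
for a $K$-invariant Radon measure $\mu$ on $X$; it therefore suffices to verify the identity for each $\nu = \nu_\mu$.

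On the left-hand side the convolution $f^**g$ is bi-$K$-invariant (since $f^*$ is left-$K$- and $g$ right-$K$-invariant), and the inclusions $\sigma(hx_0)\in hK$, $\sigma(hu)\in h\sigma(u)K$ imply $\sigma(hx_0)^{-1}\sigma(hu)\in K\sigma(u)K$; so $(f^**g)(\sigma(hx_0)^{-1}\sigma(hu)) = (f^**g)(\sigma(u))$ independently of $h$. Writing $\tilde b = b\circ\pi$, the Weil formula gives $\int_G\tilde b\,dm_G = m_X(b) = 1$, and the $h$-integral collapses, leaving $\int_X(f^**g)(\sigma(u))\,d\mu(u)$. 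On the right-hand side, right-$K$-invariance gives $f(\sigma(hx_0))=f(h)$ and $g(\sigma(hu))=g(h\sigma(u))$, and the substitution $h\mapsto h^{-1}$ (permissible by unimodularity of $G$, which holds for any Gelfand pair) identifies $\int_G\overline{f(h)}\,g(h\sigma(u))\,dm_G(h)$ with $(f^**g)(\sigma(u))$, so the right-hand side also reduces to $\int_X(f^**g)(\sigma(u))\,d\mu(u)$. The main technical obstacle will be making the disintegration $\nu = \nu_\mu$ rigorous at the Radon level, including verifying finiteness of the defining integral on compactly supported $\Phi$; once this is granted, the rest is bookkeeping.
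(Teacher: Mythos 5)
Your argument is correct, but it takes a genuinely different route from the paper. The paper proves the first identity by a direct computation: it unfolds the convolution as $f^**g(\sigma(x)^{-1}\sigma(y))=\int_G\overline{f(h\sigma(x))}\,g(h\sigma(y))\,dm_G(h)$, applies Fubini, uses right-$K$-invariance to write $f(h\sigma(x))=f(\sigma(hx))$, and then invokes equivariance of $\Lambda$ and invariance of $\mathbb{P}$ to push the $h$-action onto $b$, whose $h$-integral collapses to $m_X(b)=1$; the covariance form then follows exactly as in your reduction. You instead encode stationarity once and for all in the $G$-invariance of the second moment measure $M_2$ on $X\times X$ and reduce to a deterministic identity for arbitrary diagonal-invariant Radon measures, which you verify orbitwise after a disintegration $\nu=\nu_\mu$ over $K\backslash X$. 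Your orbitwise computation is correct (bi-$K$-invariance of $f^**g$, the coset relations $\sigma(hx_0)\in hK$ and $\sigma(hu)\in h\sigma(u)K$, and the Weil formula all check out), and your approach has the conceptual merit of making transparent why the result is independent of $b$ and $\sigma$. The cost is precisely the step you flag: existence and uniqueness of the disintegration of an invariant Radon measure over the orbits of the (proper) diagonal action, which is classical (Bourbaki-style integration theory for proper actions; properness holds here since $K$ is compact) but is heavier machinery than the paper needs. It is worth noting that the disintegration is avoidable even within your framework: your lemma for a general invariant $\nu$ can be proved directly by inserting $1=\int_G \tilde b(h^{-1}\cdot)\,dm_G(h)$ and using the invariance of $\nu$ under $h$ — which is in essence what the paper's Fubini computation does with $\nu=M_2$.
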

\begin{proof}
	Let $f,g\in \mathcal{L}_{\mathrm{bnd}}^\infty(G)$. We note that
	\begin{align*}
		f^**g(\sigma(x)^{-1}\sigma(y)) &= \int \overline{f(h^{-1})}g(h^{-1}\sigma(x)^{-1}\sigma(y))dm_G(h)\\
		&= \int \overline{f(h)}g(h\sigma(x)^{-1}\sigma(y))dm_G(h)\\
		&= \int \overline{f(h\sigma(x))}g(h\sigma(y))dm_G(h)\\
	\end{align*}
	and thus
	\begin{align*}
		\mathbb{E}&\left[\int \int f^**g(\sigma(x)^{-1}\sigma(y))b(x)d\Lambda(y)d\Lambda(x)\right]\\
		&=\mathbb{E}\left[  \int_G \int_X\int_X \overline{f(h\sigma(x))}g(h\sigma(y))b(x)d\Lambda(y)d\Lambda(x)\right]\\
		&=  \int_G\mathbb{E}\left[\int_X \int_X  \overline{f(h\sigma(x))}g(h\sigma(y))b(x)d\Lambda(y)d\Lambda(x)\right]dm_G(h)\\
		&= \int_G\mathbb{E}\left[\int_X \int_X  \overline{f(\sigma(hx))}g(\sigma(hy))b(x)d\Lambda(y)d\Lambda(x)\right]dm_G(h)\\
		&= \int_G\mathbb{E}\left[\int_X \int_X  \overline{f(\sigma(x))}g(\sigma(y))b(h^{-1}x)dh_*\Lambda(y)dh_*\Lambda(x)\right]dm_G(h)\\
		&= \int_G\int_\Omega\int_X \int_X  \overline{f(\sigma(x))}g(\sigma(y))b(h^{-1}x)d\Lambda_{h\omega}(y)d\Lambda_{h\omega}(x)d\mathbb{P}(\omega)dm_G(h)\\
		&= \int_G\int_\Omega\int_X \int_X  \overline{f(\sigma(x))}g(\sigma(y))b(h^{-1}x)d\Lambda_{\omega}(y)d\Lambda_{\omega}(x)d\mathbb{P}(\omega)dm_G(h)\\
		&= m_G(b) \mathbb{E}[\Lambda(f)\Lambda(g)]\\
		&= \mathbb{E}[\Lambda(f)\Lambda(g)].
	\end{align*}
	Note that 
	\begin{align*}
		\mathrm{Cov}(\Lambda(f\circ\sigma),\Lambda(g\circ\sigma)) &= \mathbb{E}[\overline{\Lambda(f\circ\sigma)}\Lambda(g\circ\sigma)]-\mathbb{E}[\overline{\Lambda(f\circ\sigma)}]\mathbb{E}[\Lambda(g\circ\sigma)]\\
		&= \mathbb{E}[\overline{\Lambda(f\circ\sigma)}\Lambda(g\circ\sigma)]-i(\Lambda)^2m_G(\overline{f\hspace{2pt}})m_G(g).
	\end{align*}
	As $m_G(f^**g) = m_G(\overline{f\hspace{2pt}})m_G(g)$, the claim follows.
\end{proof}
\begin{corollary}[Björklund--Byléhn,\cite{MichaelMattiasArXiv}]
	Let $\Lambda$ be a locally square integrable point process in $X$.
	For any choice of $b$ and $\sigma$ as in Proposition \ref{p:autocorrelation_and_moments} we can define bi-$K$-invariant Radon measures $\eta_\Lambda$ and $\eta_\Lambda^+$ on $G$ by
	\[
	\eta_\Lambda^+(f)\coloneqq \mathbb{E}\left[\int \int f^\natural(\sigma(x)^{-1}\sigma(y))b(x)d\Lambda(y)d\Lambda(x)\right]
	\]
	and
	\[
	\eta_\Lambda(f)\coloneqq \mathbb{E}\left[\int \int f^\natural(\sigma(x)^{-1}\sigma(y))b(x)d\Lambda(y)d\Lambda(x)\right] -i(\Lambda)^2m_G(f)
	\]
	for all $f\in C_c(G)$. These measures are independent of $b$ and $\sigma$.
\end{corollary}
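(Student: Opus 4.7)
The statement contains three sub-claims: (a) the prescriptions actually define Radon measures, (b) they are bi-$K$-invariant, and (c) they are independent of the auxiliary data $(b,\sigma)$. Since $\eta_\Lambda$ differs from $\eta_\Lambda^+$ by the bi-$K$-invariant Radon measure $i(\Lambda)^2 m_G$ (here $G$ is unimodular because $(G,K)$ is a Gelfand pair), it suffices to establish (a)--(c) for $\eta_\Lambda^+$; the corresponding facts for $\eta_\Lambda$ then follow automatically.

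For (a), the functional $f \mapsto \eta_\Lambda^+(f)$ is linear on $C_c(G)$ and positive since $f\geq 0$ implies $f^\natural \geq 0$ and $b \geq 0$. The content is local finiteness. If $f$ is supported in a compact set $C$, then $f^\natural$ is supported in $KCK$ (again compact) and $\|f^\natural\|_\infty \leq \|f\|_\infty$. Writing $B \coloneqq \supp(b)$ and $E \coloneqq \pi(\sigma(B)\cdot KCK)$, both bounded (for $E$ one uses that $\pi$ is proper because $K$ is compact, so $\pi^{-1}(\overline B)$ is compact, whence $\sigma(B)\cdot KCK$ is relatively compact), the integrand vanishes unless $x \in B$ and $y \in E$. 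Hence
\[
|\eta_\Lambda^+(f)| \leq \|f\|_\infty \|b\|_\infty \, \mathbb{E}[\Lambda(B)\Lambda(E)] \leq \|f\|_\infty \|b\|_\infty \, \mathbb{E}[\Lambda(B)^2]^{1/2}\mathbb{E}[\Lambda(E)^2]^{1/2},
\]
which is finite by local square integrability of $\Lambda$. Riesz representation then promotes $\eta_\Lambda^+$ to a Radon measure on $G$. Bi-$K$-invariance in (b) is immediate: because $(f^\natural)^\natural = f^\natural$, the defining formula yields $\eta_\Lambda^+(f^\natural) = \eta_\Lambda^+(f)$, so $\eta_\Lambda^+$ is $K$-spherical.

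For (c), the strategy is to invoke Lemma \ref{l:uniqueness_invariant_mesures}: two bi-$K$-invariant Radon measures that agree on $\{f_1 * f_2 : f_1, f_2 \in C_c(G,K)\}$ coincide. Given $f_1, f_2 \in C_c(G,K)$, note that $f_1^* \in C_c(G,K)$ and that both $f_1^*$ and $f_2$ are right-$K$-invariant elements of $\mathcal{L}^\infty_{\mathrm{bnd}}(G)$. Proposition \ref{p:autocorrelation_and_moments} therefore yields
\[
\eta_\Lambda^+(f_1 * f_2) \;=\; \eta_\Lambda^+\bigl((f_1^*)^* * f_2\bigr) \;=\; \mathbb{E}\bigl[\overline{\Lambda(f_1^* \circ \sigma)}\, \Lambda(f_2 \circ \sigma)\bigr].
\]
The right-hand side is manifestly independent of $b$, and independent of $\sigma$ because any two Borel sections differ by a measurable map $X \to K$, which is absorbed by the right-$K$-invariance of $f_1^*$ and $f_2$. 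Hence $\eta_\Lambda^+(f_1 * f_2)$ does not depend on $(b,\sigma)$, and Lemma \ref{l:uniqueness_invariant_mesures} closes the argument.

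The main point needing care is (c): one must notice that the involution $f \mapsto f^*$ preserves $C_c(G,K)$ so that Proposition \ref{p:autocorrelation_and_moments} can be applied to $(f_1^*, f_2)$ and thereby directly access $\eta_\Lambda^+(f_1 * f_2)$ rather than only $\eta_\Lambda^+(f_1^* * f_2)$. The measure-theoretic verifications in (a) and (b) are essentially bookkeeping once the support control above is in place.
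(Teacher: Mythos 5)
Your proof is correct and follows essentially the same route as the paper, whose entire argument is ``this follows from Lemma \ref{l:uniqueness_invariant_mesures} and Proposition \ref{p:autocorrelation_and_moments}'': you identify $\eta_\Lambda^+(f_1*f_2)=\mathbb{E}[\overline{\Lambda(f_1^*\circ\sigma)}\Lambda(f_2\circ\sigma)]$ via the proposition and then invoke the density lemma, exactly as intended. The additional verifications of local finiteness and bi-$K$-invariance are correct and fill in details the paper leaves implicit.
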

\begin{proof}
	This follows directly from Lemma \ref{l:uniqueness_invariant_mesures} and Proposition \ref{p:autocorrelation_and_moments}.
\end{proof}
Now the following corollary follows directly from Lemma \ref{l:lf_measure_are_distributions}.
\begin{corollary}\label{c:autocorrelation_distributions_exist}
	If $G$ is a Lie group, $\eta_\Lambda^+$ and $\eta_\Lambda$ induce distributions on $G$.
\end{corollary}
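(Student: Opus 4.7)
The plan is essentially to invoke Lemma \ref{l:lf_measure_are_distributions} and show that the hypothesis applies to both measures. The content of the statement is very slim: the preceding corollary already establishes that $\eta_\Lambda^+$ is a bi-$K$-invariant Radon measure on $G$, and Lemma \ref{l:lf_measure_are_distributions} directly says that any Radon measure on a Lie group defines a distribution in the sense of Definition \ref{d:distributions} by restriction to $C_c^\infty(G)$. So for $\eta_\Lambda^+$ there is nothing further to do.

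For $\eta_\Lambda$ the only subtlety is that $\eta_\Lambda = \eta_\Lambda^+ - i(\Lambda)^2 m_G$ is a signed Radon measure, not a positive one, so Lemma \ref{l:lf_measure_are_distributions} does not apply verbatim. However, both $\eta_\Lambda^+$ and $i(\Lambda)^2 m_G$ are positive Radon measures on $G$, and by Lemma \ref{l:lf_measure_are_distributions} each of them defines a distribution
\[
T_{\eta_\Lambda^+}[f] := \eta_\Lambda^+(f), \qquad T_{m_G}[f] := m_G(f), \qquad f\in C_c^\infty(G).
\]
The space of distributions on $G$ is a vector space: given any open relatively compact $O\subset G$, if $T_1$ and $T_2$ satisfy the bound of Definition \ref{d:distributions} with collections of operators $\{D_i^{(1)}\}$ and $\{D_i^{(2)}\}$, then $T_1-T_2$ satisfies the same type of bound with $\{D_i^{(1)}\}\cup \{D_i^{(2)}\}$. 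Hence
\[
T_{\eta_\Lambda}[f] := T_{\eta_\Lambda^+}[f] - i(\Lambda)^2 T_{m_G}[f]
\]
is a distribution on $G$.

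There is no real obstacle here; the only thing worth being careful about is that "distribution" in this paper is defined in the sense of Definition \ref{d:distributions}, which is a local boundedness condition in terms of differential operators, and this condition is manifestly preserved under finite linear combinations. So the proof reduces to citing Lemma \ref{l:lf_measure_are_distributions} for $\eta_\Lambda^+$ and for $m_G$ separately and taking a linear combination.
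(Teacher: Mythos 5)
Your proposal is correct and matches the paper, which simply states that the corollary ``follows directly'' from Lemma \ref{l:lf_measure_are_distributions}. Your extra care in treating $\eta_\Lambda$ as a difference of two positive Radon measures (since the lemma as stated applies to positive measures) is a valid and slightly more complete version of the same argument.
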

\begin{definition}[Björklund--Hartnick--Pogorzelski, \cite{BHP2}, Björklund--Hartnick, \cite{BjorklundHartnickHyperuniformity}]
	Let $\Lambda$ be a locally square integrable point process in $X$. The bi-$K$-invariant Borel measures $\eta_\Lambda^+$ and $\eta_\Lambda$ on $G$ are called \emph{autocorrelation measure} and and \emph{reduced autocorrelation measure} of $\Lambda$.
	The distributions $T_\Lambda^+$ and $T_\Lambda$ induced by $\eta_\Lambda^+$ and $\eta_\Lambda$ are called \emph{autocorrelation distribution} and \emph{reduced autocorrelation distribution} of $\Lambda$.
\end{definition}
\begin{proposition}
	Let $\Lambda$ be a locally square integrable point process in $X$.
	The measures $\eta_\Lambda$ and $\eta_\Lambda^+$ are positive-definite and thus the distributions $T_\Lambda^+$ and $T_\Lambda$ are positive definite.
\end{proposition}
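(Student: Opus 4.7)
My plan is to deduce positive-definiteness of both $\eta_\Lambda^+$ and $\eta_\Lambda$ from the two identities in Proposition \ref{p:autocorrelation_and_moments}, after reducing to right-$K$-invariant test functions. Since both measures are bi-$K$-invariant (as recorded in the corollary preceding this proposition), for any $f \in C_c(G)$ we have
\[
\eta_\Lambda^+(f^**f) = \eta_\Lambda^+\bigl((f^**f)^\natural\bigr) \qquad\text{and}\qquad \eta_\Lambda(f^**f) = \eta_\Lambda\bigl((f^**f)^\natural\bigr).
\]

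The crucial computation is then the identity $(f^**f)^\natural = \tilde f^* * \tilde f$, where $\tilde f(g) \coloneqq \int_K f(gk)\,dm_K(k)$ denotes the right-$K$-average of $f$. To verify this I would expand both sides and use unimodularity of $G$ to perform two changes of variable, checking that they both equal
\[
\int_K\int_K\int_G \overline{f(h)}\, f(h k_1 g k_2)\, dm_G(h)\, dm_K(k_1)\, dm_K(k_2).
\]
Since $\tilde f$ is right-$K$-invariant and compactly supported, Proposition \ref{p:autocorrelation_and_moments} applied with both arguments equal to $\tilde f$ yields
\[
\eta_\Lambda^+(f^**f) = \mathbb{E}\bigl[\,|\Lambda(\tilde f \circ \sigma)|^2\,\bigr] \geq 0,
\]
and, via its second identity, $\eta_\Lambda(f^**f) = \mathrm{Var}\bigl(\Lambda(\tilde f \circ \sigma)\bigr) \geq 0$. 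Hence both $\eta_\Lambda^+$ and $\eta_\Lambda$ are positive-definite.

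For the distributional claim, once positive-definiteness of the measures is in hand, I would simply invoke Lemma \ref{l:lf_measure_are_distributions} and Corollary \ref{c:autocorrelation_distributions_exist}: the distributions $T_\Lambda^+$ and $T_\Lambda$ act on $C_c^\infty(G) \subset C_c(G)$ by the same formulas as $\eta_\Lambda^+$ and $\eta_\Lambda$, so the nonnegativity on convolution squares transfers immediately. The only nonformal step is the periodization identity $(f^**f)^\natural = \tilde f^**\tilde f$; once that short bookkeeping is completed, everything else is an immediate appeal to the preceding results.
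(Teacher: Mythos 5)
Your proposal is correct and follows essentially the same route as the paper: the paper's proof likewise reduces to right-$K$-invariant test functions by showing (under the integral against $\eta_\Lambda^+$) that the $K$-periodization of $f^**f$ equals $(f')^**f'$ with $f'$ the right-$K$-average, and then invokes Proposition \ref{p:autocorrelation_and_moments} to identify the result as a second moment, resp.\ a variance. The only cosmetic difference is that you state the periodization identity pointwise before integrating, while the paper carries out the same change-of-variables computation inside the integral.
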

\begin{proof}
	Let $g\in C_c(G)$ be right-$K$-invariant. 
	Then the Proposition \ref{p:autocorrelation_and_moments} implies that 
	\[
	\eta_\Lambda^+(g^**g)\geq 0.
	\]
	If $f\in C_c(G)$, then 
	\begin{align*}
		\eta_\Lambda^+(f^**f) &= \int_G\int_K\int_K f^**f(kgk)\,dm_K(k_1)\,dm_K(k_2)\,d\eta_\Lambda^+(g)\\
		&= \int_G\int_K\int_K \int_G \overline{f(h^{-1})}f(h^{-1}k_1gk_2)\,dm_G(h)\,dm_K(k_1)\,dm_K(k_2)\,d\eta_\Lambda^+(g)\\
		&= \int_G\int_K\int_K \int_G \overline{f(h^{-1}k_1^{-1})}f(h^{-1}gk_2)\,dm_G(h)\,dm_K(k_1)\,dm_K(k_2)\,d\eta_\Lambda^+(g)\\
		&= \int_G \int_K\int_K\int_G \overline{f(h^{-1}k_1)}f(h^{-1}gk_2)\,dm_G(h)\,dm_K(k_1)\,dm_K(k_2)\,d\eta_\Lambda^+(g)\\
		&= \int_G \int_G \overline{f'(h^{-1})}f'(h^{-1}g)\,dm_G(h)\,d\eta_\Lambda^+(g)\\
		&= \int_G (f')^**(f')(g)\,d\eta_\Lambda^+(g)\\
		&= \eta_\Lambda^+((f')^**(f'))\geq 0,
	\end{align*}
	where we note that the function $f'$ on $G$ defined by
	\[
	f'(g)= \int_Kf(gk)dm_K(k)
	\]
	is right-$K$-invariant and in $C_c(G)$.
	The same calculation applied to $\eta_\Lambda$ shows that $\eta_\Lambda$ is positive definite.
\end{proof}
\section{Densities of random and deterministic packings}\label{s:deterministic_and_random_density}
In this section we introduce random sphere packings and define their density. We explain how they are connected to point processes and how the notion of density of random sphere packings and of a deterministic sphere packings are related.
We additionally give several examples of random sphere packings. As in the previous section we assume that $(G,K)$ is a Lie group Gelfand pair and that $X=G/K$ is equipped with a complete, proper, continuous and $G$-invariant metric $d$.
\subsection{Densities of random packings}
\begin{definition}
	Let $\mathfrak{F}(X)$ denote the set of closed subsets of $X$ and let $(\Omega, \mathcal{A} ,\mathbb{P})$ be a probability space on which $G$ acts measurably such that $\mathbb{P}$ is $G$-invariant.
	A $G$-equivariant map $\Lambda:\Omega\to \mathfrak{F}(X)$ is a \emph{stationary random closed set} if 
	\[\{\omega\in \Omega\mid \Lambda(\omega)\cap C \neq \emptyset\}\in \mathcal{A}\]
	for any compact $C\subset X$.
	
	Denote by $\mathfrak{G}(X)$ the set of open subsets of $X$. A $G$-equivariant map $\Lambda:\Omega\to \mathfrak{G}(X)$ is called a \emph{stationary random open set} if 
	\[\{\omega\in \Omega\mid C\subset \Lambda(\omega)\}\in \mathcal{A}\] 
	is measurable for all $C\in \mathfrak{F}(X)$.
\end{definition}

\begin{remark}
	The set $\mathfrak{F}(X)$ can be equipped with a a compact second-countable Hausdorff topology, the Chabauty-Fell topology, see for instance appendix C in \cite{Molchanov}. This topology is generated by the subbasis
	\[U^C\coloneqq \{A\in \mathfrak{F}(X)\mid A\cap C=\emptyset\}\qquad U_V\coloneqq \{A\in \mathfrak{F}(X)\mid A\cap V\neq \emptyset\},\]
	where $C$ ranges over the compact subsets of $X$ and $V$ over the open subsets of $X$. 
	The condition in the definition above encodes measurability of $\Lambda$ with respect to the Borel $\sigma$-algebra of the Fell topology.
\end{remark}

See \cite{Molchanov} for more information on random closed/open sets, for invariance properties under group actions see specifically Proposition 1.3.30 and Section 1.5.1. Note in particular that the natural action of $G$ on $\mathfrak{F}(X)$ is continuous.

\begin{definition}
	Let $r>0$.
	We call $P\subset X$ is \emph{$r$-uniformly discrete} if $d(x,y)\geq r$ for all $x\neq y\in P$ and denote the set of $r$-uniformly discrete subsets of $X$ by $\UD_r(X)$.
	We set \[\mathcal{N}^*_r(X)=\{\mu\in \mathcal{N}^*(X)\mid \supp(\mu)\in\UD_r(X)\}\]
	and call a point process $\Lambda$ \emph{$r$-uniformly discrete}, if 
	$\Lambda_\omega\in \mathcal{N}_r^*(X)$ for all $\omega\in \Omega$ or equivalently $\Lambda_\omega(B_r(x))\in \{0,1\}$ for all $x\in X$ and $\omega\in \Omega$.
	We equip $\UD_r(X)\subset \mathfrak{F}(X)$ with the subspace topology.
\end{definition}

We think of $2r$-uniformly discrete point processes $\Lambda$ as random sphere packings, by interpreting the points in $\Lambda$ as centers of spheres of radius $r$.

\begin{lemma}\label{l:uniform_discrete_implies_locally_Lp}
	Let $\Lambda$ be a $r$-uniformly discrete point process for some $r>0$. Then $\Lambda$ is locally $L^\infty$ (and thus  locally square integrable and locally $L^1$).
\end{lemma}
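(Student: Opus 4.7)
The strategy is to convert the hypothesis of $r$-uniform discreteness into a deterministic pointwise bound on $\Lambda_\omega(B)$ for each bounded Borel set $B$ by a standard volume-packing argument. Once such an $\omega$-uniform bound is established, $\Lambda(B)\in L^\infty(\Omega,\mathbb{P})$, and since $(\Omega,\mathbb{P})$ is a probability space the inclusions $L^\infty\subset L^2\subset L^1$ yield that $\Lambda$ is simultaneously locally $L^\infty$, locally square integrable, and locally $L^1$.

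Fix a bounded Borel set $B\subset X$. Properness of the metric $d$ lets me choose $R>0$ with $B\subset B(x_0,R)$. For each $\omega\in\Omega$ set $P_\omega := \supp(\Lambda_\omega)\cap B(x_0,R)$. By hypothesis, any two distinct points of $P_\omega$ are at distance at least $r$, so the open balls $\{B(x,r/2):x\in P_\omega\}$ are pairwise disjoint by the triangle inequality, and each is contained in $B(x_0,R+r/2)$. Using $G$-invariance of $m_X$ together with transitivity of $G$ on $X$, one has $m_X(B(x,r/2))=m_X(B(x_0,r/2))$ for every $x$, and summing over the disjoint family yields
\[
\Lambda_\omega(B)\leq \#P_\omega\leq \frac{m_X(B(x_0,R+r/2))}{m_X(B(x_0,r/2))},
\]
a bound independent of $\omega$, which immediately gives $\Lambda(B)\in L^\infty$.

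The only non-formal point I foresee is verifying that the denominator is strictly positive. Since $d$ induces the topology on $X$, the ball $B(x_0,r/2)$ is a nonempty open set; because $X$ is $\sigma$-compact (using properness of $d$) and $G$ acts transitively on $X$, a vanishing $m_X(B(x_0,r/2))$ would force $m_X\equiv 0$ by $G$-invariance, contradicting the fact that $m_X$ is a nonzero invariant Radon measure. No further obstacle arises, and the argument is essentially complete.
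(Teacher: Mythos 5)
Your proof is correct and follows essentially the same volume-packing argument as the paper: cover $B$ by $B(x_0,R)$, note the balls $B(x,r/2)$ around packing centers are disjoint and contained in $B(x_0,R+r/2)$, and divide volumes to get an $\omega$-uniform bound. Your version is in fact slightly more careful than the paper's (you use the correct denominator $m_X(B(x_0,r/2))$, where the paper writes $m_X(B(x_0,r))$, and you verify its positivity).
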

\begin{proof}
	Let $B\subset X$ be bounded and measurable and choose $R>0$ such that $B\subset B(x_0, R)$.
	We have for any $r$-uniformly discrete $P\subset X$ that 
	\[\#(P\cap B) = \frac{m_X(\bigcup_{x\in P\cap B}B(x,r/2))}{m_X(B(x_0,r))}\leq \frac{m_X(B(x_0, R+r/2))}{m_X(B(x_0,r)}.\]
	Hence, if $\Lambda$ is a $r$-uniformly discrete point process, we have
	\[\Lambda_\omega(B) = \#(P_\omega\cap B) \leq \frac{m_X(B(x_0, R+r/2))}{m_X(B(x_0,r)}\]
	for any $\omega\in \Omega$. Here $P_\omega=\supp(\Lambda_\omega)$.
	Thus $\Lambda$ is locally $L^\infty$.
\end{proof}
\begin{corollary}
	A $r$-uniformly discrete stationary point process has a (reduced) autocorrelation measure and distribution.
\end{corollary}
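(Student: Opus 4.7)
The plan is to chain together three facts already established in the excerpt, so this is essentially a bookkeeping argument rather than a fresh calculation. First I would invoke Lemma \ref{l:uniform_discrete_implies_locally_Lp}: any $r$-uniformly discrete stationary point process $\Lambda$ is locally $L^\infty$ (the deterministic bound on point counts in balls, obtained by volume-packing, does not depend on $\omega$), and hence in particular locally square integrable, which is the only hypothesis needed to invoke the construction of $\eta_\Lambda^+$ and $\eta_\Lambda$.

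Second, I would apply the corollary following Proposition \ref{p:autocorrelation_and_moments}, which directly associates to any locally square integrable point process the bi-$K$-invariant Radon measures $\eta_\Lambda^+$ and $\eta_\Lambda$ on $G$, independent of the auxiliary choices of cutoff $b$ and Borel section $\sigma$. This already delivers the (reduced) autocorrelation measures.

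Finally, to upgrade from measures to distributions, I would invoke Corollary \ref{c:autocorrelation_distributions_exist}, which is itself an immediate consequence of Lemma \ref{l:lf_measure_are_distributions} applied to the two Radon measures just constructed; this yields the autocorrelation distribution $T_\Lambda^+$ and the reduced autocorrelation distribution $T_\Lambda$.

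There is no real obstacle here — the corollary is essentially a transitivity statement along the implications \emph{$r$-uniformly discrete} $\Rightarrow$ \emph{locally $L^\infty$} $\Rightarrow$ \emph{locally square integrable} $\Rightarrow$ \emph{autocorrelation measure exists} $\Rightarrow$ \emph{autocorrelation distribution exists}. The only point where one might want to be careful is to note that the local $L^\infty$ bound from Lemma \ref{l:uniform_discrete_implies_locally_Lp} is uniform in $\omega$ (coming from the volume comparison), which makes all the moment integrals defining $\eta_\Lambda^+$ finite on compactly supported test functions; but this is already built into the cited lemma.
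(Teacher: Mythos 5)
Your proposal is correct and follows exactly the chain of implications the paper intends: Lemma \ref{l:uniform_discrete_implies_locally_Lp} gives local $L^\infty$ (hence local square integrability), the corollary to Proposition \ref{p:autocorrelation_and_moments} then yields the measures $\eta_\Lambda^+$ and $\eta_\Lambda$, and Corollary \ref{c:autocorrelation_distributions_exist} upgrades them to distributions. This is the same (omitted) argument the paper relies on, so nothing further is needed.
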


\begin{proposition}[{\cite[Proposition 3.2]{BHP2}}]
	The bijection $\mathcal{N}_r^*(X)\to \UD_r(X)$ is a homeomorphism and $\UD_r(X)$ is compact and metrizable.
\end{proposition}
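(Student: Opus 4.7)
The plan proceeds in three main steps: establish compactness and metrizability of $\UD_r(X)$, prove continuity of the support map $\Phi : \mathcal{N}_r^*(X) \to \UD_r(X)$, $\mu \mapsto \supp(\mu)$, and prove continuity of its inverse $\Psi : P \mapsto \sum_{x \in P} \delta_x$. I first show that $\UD_r(X)$ is closed in $\mathfrak{F}(X)$: if $A_n \in \UD_r(X)$ converges Chabauty-Fell to $A$ and $A$ contained distinct $x, y$ with $d(x,y)<r$, then choosing disjoint open neighbourhoods $U \ni x$, $V \ni y$ with $\sup\{d(u,v) : u \in U, v \in V\} < r$ puts $A$ in the Chabauty-Fell open set $U_U \cap U_V$, so eventually $A_n \cap U \neq \emptyset \neq A_n \cap V$, contradicting $r$-discreteness. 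Because $\mathfrak{F}(X)$ is compact, second-countable and Hausdorff (hence metrizable by Urysohn), the closed subspace $\UD_r(X)$ inherits compactness and metrizability.

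For continuity of $\Phi$, suppose $\mu_n \to \mu$ in the weak-$*$ topology; I verify the defining subbasis of the Chabauty-Fell topology. If $C \subset X$ is compact with $\supp(\mu) \cap C = \emptyset$, local finiteness of $\supp(\mu)$ gives a relatively compact open $W \supset C$ whose closure avoids $\supp(\mu)$; choosing $f \in C_c(X)$ with $0 \leq f \leq 1$ and $f \equiv 1$ on $W$, one has $\mu(f) = 0$, hence $\mu_n(f) \to 0$, and since $\mu_n(W) \leq \mu_n(f)$ is a non-negative integer, eventually $\mu_n(W) = 0$, giving $\supp(\mu_n) \cap C = \emptyset$. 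Conversely, if $V$ is open with $\supp(\mu) \cap V \neq \emptyset$, pick $x \in \supp(\mu) \cap V$ and $f \in C_c(X)$ with $\supp(f) \subset V$, $f \geq 0$, $f(x) > 0$; then $\mu_n(f) \to \mu(f) \geq f(x) > 0$, so $\supp(\mu_n) \cap V \neq \emptyset$ eventually.

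For continuity of $\Psi$, assume $P_n \to P$ Chabauty-Fell and let $f \in C_c(X)$. Choose a compact $K'$ whose interior contains $\supp(f)$ and satisfies $P \cap \partial K' = \emptyset$ (easily arranged by slight enlargement, using local finiteness of $P$), and enumerate the finite set $P \cap K' = \{y_1, \dots, y_m\}$. For $\varepsilon \in (0, r/2)$ small enough that the balls $B_j \coloneqq B(y_j, \varepsilon)$ are pairwise disjoint and lie in the interior of $K'$, the complementary compact set $C_\varepsilon \coloneqq K' \setminus \bigcup_j B_j$ is disjoint from $P$. Chabauty-Fell convergence therefore yields, for large $n$, that $P_n \cap B_j \neq \emptyset$ for each $j$ and $P_n \cap C_\varepsilon = \emptyset$; combined with $r$-discreteness and $\operatorname{diam}(B_j) < r$, this forces $|P_n \cap B_j| = 1$, so $P_n \cap K' = \{x_{1,n}, \dots, x_{m,n}\}$ with $x_{j,n} \in B_j$. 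Since $\varepsilon$ was arbitrary, $x_{j,n} \to y_j$, and continuity of $f$ yields
\[\sum_{x \in P_n} f(x) = \sum_{j=1}^m f(x_{j,n}) \longrightarrow \sum_{j=1}^m f(y_j) = \sum_{x \in P} f(x),\]
which is the required weak-$*$ convergence $\Psi(P_n) \to \Psi(P)$.

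The most delicate step is the continuity of $\Psi$: one must guarantee that the points of $P_n$ near $\supp(f)$ are in bijection with the finitely many points of $P$ there, with no merging or escape. This is handled by using both types of Chabauty-Fell subbasic neighbourhoods---avoidance via $U^C$ and incidence via $U_V$---in concert with the $r$-uniform discreteness condition, which prevents multiple $P_n$-points from accumulating in any ball of diameter less than $r$.
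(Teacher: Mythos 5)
Your proof is correct. The paper itself gives no argument for this proposition --- it is quoted verbatim from \cite[Proposition 3.2]{BHP2} --- so there is nothing to compare against except the standard proof, which yours essentially is: closedness of $\UD_r(X)$ in the compact metrizable space $\mathfrak{F}(X)$ via the two kinds of Chabauty--Fell subbasic sets, and continuity of the two mutually inverse maps by testing against $C_c$-functions (using integrality of the masses) in one direction and by the ball/complement decomposition plus $r$-uniform discreteness in the other. One small imprecision in the $U^C$ step: to conclude $\mu(f)=0$ you need $\supp(f)$, not just $W$, to avoid $\supp(\mu)$; this is fixed by interposing a second, slightly larger neighbourhood $W'$ with $\overline{W}\subset W'$ and $\overline{W'}\cap\supp(\mu)=\emptyset$ and taking $f$ supported in $W'$. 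You also implicitly use that both spaces are metrizable so that sequential continuity suffices; for $\mathcal{N}^*(X)$ this follows from the Prohorov statement already quoted in the paper, so it is worth a one-line remark but is not a gap.
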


\begin{corollary}
	If $\Lambda$ is a $r$-uniformly discrete point process, then 
	\[\supp(\Lambda):\Omega\to \UD_r(X),\ \omega\mapsto \supp(\Lambda_\omega)\]
	is a random closed set.
\end{corollary}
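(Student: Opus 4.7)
The statement is essentially a bookkeeping exercise: we need to verify that $\supp(\Lambda)$ is $G$-equivariant and that its preimages of the generating open sets of the Chabauty--Fell topology on $\UD_r(X)$ lie in $\mathcal{A}$. My plan is to reduce the measurability question to two already-established facts, namely that $\Lambda:\Omega\to\mathcal{N}^*(X)$ is a stationary point process (in particular measurable and $G$-equivariant) and that the support map $\mathcal{N}_r^*(X)\to\UD_r(X)$ is a homeomorphism (the immediately preceding proposition).

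First, since $\Lambda$ is $r$-uniformly discrete, it factors through the subset $\mathcal{N}_r^*(X)\subset\mathcal{N}^*(X)$, and I would note that this factorisation is still measurable because $\mathcal{N}_r^*(X)$ carries the subspace Borel structure. Composing $\Lambda$ with the homeomorphism $\supp:\mathcal{N}_r^*(X)\to\UD_r(X)$ (and hence with the continuous inclusion $\UD_r(X)\hookrightarrow\mathfrak{F}(X)$) yields a Borel measurable map $\omega\mapsto\supp(\Lambda_\omega)$ into $\mathfrak{F}(X)$. For the $G$-equivariance I would use that pushforward of counting measures interacts with the support operation by $\supp(g_*\mu)=g\cdot\supp(\mu)$, which combined with equivariance of $\Lambda$ gives $\supp(\Lambda_{g\omega})=g\cdot\supp(\Lambda_\omega)$.

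It remains to check the defining measurability condition for a random closed set, namely that $\{\omega\mid\supp(\Lambda_\omega)\cap C\neq\emptyset\}\in\mathcal{A}$ for every compact $C\subset X$. I would give a direct argument rather than appealing to the Chabauty--Fell topology: since $C$ is compact it is bounded and Borel, so by Proposition~\ref{prop:N_star_std_Borel} the evaluation $\pi_C:\mathcal{N}^*(X)\to[0,\infty)$ is Borel measurable. But $\supp(\Lambda_\omega)\cap C\neq\emptyset$ if and only if $\Lambda_\omega(C)\geq 1$, so the set in question equals $(\pi_C\circ\Lambda)^{-1}([1,\infty))$ and hence lies in $\mathcal{A}$.

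There is no real obstacle: the content of the corollary is that the two notions of stationary randomness (point process in $\mathcal{N}_r^*(X)$ versus random closed set in $\UD_r(X)$) agree under the identification $\supp$, and the preceding proposition already contains the topological/measure-theoretic heart of the matter. The only thing to be careful about is that one should work with the Borel $\sigma$-algebra on $\mathfrak{F}(X)$ generated by the Chabauty--Fell topology, and the direct verification via $\pi_C$ avoids even having to unpack that.
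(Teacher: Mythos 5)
Your proposal is correct and follows essentially the route the paper intends: the corollary is stated without proof precisely because it is meant to follow from composing the measurable, equivariant map $\Lambda$ with the homeomorphism $\mathcal{N}_r^*(X)\to\UD_r(X)$ of the preceding proposition. Your supplementary direct verification of the hitting condition via $\pi_C$ (using that $\supp(\Lambda_\omega)\cap C\neq\emptyset$ iff $\Lambda_\omega(C)\geq 1$, with $C$ compact hence bounded and Borel) is a valid, self-contained alternative that bypasses the Chabauty--Fell subbasis entirely.
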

\begin{proposition}
	If $\Lambda$ is a $2r$-uniformly discrete point process, then the map
	\[\supp(\Lambda)^r:\Omega\to \mathfrak{G}(X),\ \omega\mapsto \bigcup_{x\in \supp(\Lambda_\omega)}B(x,r)\]
	is a stationary random open set.
\end{proposition}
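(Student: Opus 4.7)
The plan is to establish the two defining properties of a stationary random open set: $G$-equivariance of $\supp(\Lambda)^r$ and measurability of the event $\{\omega : C \subset \supp(\Lambda)^r(\omega)\}$ for each closed $C\subset X$. Equivariance is immediate from the $G$-equivariance of $\Lambda$ (so $\supp(\Lambda_{g\omega}) = g\cdot\supp(\Lambda_\omega)$) combined with the $G$-invariance of the metric $d$ (hence $g\cdot B(x,r) = B(gx,r)$), giving
\[
\supp(\Lambda)^r(g\omega) = \bigcup_{x\in\supp(\Lambda_\omega)}B(gx,r) = g\cdot\supp(\Lambda)^r(\omega).
\]

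For the measurability condition I would first reduce to compact $C$: since $X$ is proper the closed balls $\overline{B(x_0,n)}$ are compact, so any closed $C$ decomposes as a countable union $C = \bigcup_n C_n$ with $C_n := C\cap \overline{B(x_0,n)}$ compact, and
\[
\{\omega : C\subset \supp(\Lambda)^r(\omega)\} = \bigcap_n \{\omega : C_n\subset \supp(\Lambda)^r(\omega)\}.
\]

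For compact $C$, write $P_\omega := \supp(\Lambda_\omega)$. Then $C\subset \supp(\Lambda)^r(\omega)$ is equivalent to $d(c,P_\omega) < r$ for every $c\in C$, which in turn, by compactness of $C$ together with continuity of $d(\cdot,P_\omega)$ (so the supremum is attained), is equivalent to $\sup_{c\in C} d(c,P_\omega) < r$. Picking a countable dense $D\subset C$ and using that $d(\cdot,P_\omega)$ is $1$-Lipschitz, this supremum coincides with $\sup_{c\in D} d(c,P_\omega)$, so
\[
\{\omega : C\subset \supp(\Lambda)^r(\omega)\} = \bigcup_{n\geq 1}\bigcap_{c\in D}\left\{\omega : P_\omega\cap \overline{B(c,r-\tfrac{1}{n})}\neq \emptyset\right\}.
\]
Since $X$ is proper, each closed ball $\overline{B(c,r-1/n)}$ is compact, and by the preceding corollary $\omega\mapsto \supp(\Lambda_\omega)$ is a stationary random closed set; hence each inner event is measurable, and the right-hand side is a countable Boolean combination of measurable sets.

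The main obstacle is this last step: converting the uncountable ``for all $c\in C$'' quantifier into a countable one. Three ingredients conspire to make this work, namely the $1$-Lipschitz continuity of $c\mapsto d(c,P_\omega)$, separability of $C$ (inherited from $X$), and properness of $X$, which both yields the $\sigma$-compactness used in the initial reduction and ensures the closed balls on the right are compact so that the random closed set axiom applies to $\supp(\Lambda)$. The remaining steps are routine bookkeeping.
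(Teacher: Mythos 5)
Your proof is correct, and it takes a somewhat different route from the paper's. The paper defines the function $\mathrm{dist}(\omega,x)=\inf_{y\in\supp(\Lambda_\omega)}d(x,y)$, invokes Effros measurability of $\supp(\Lambda)$ (via Molchanov) to get joint measurability of $\mathrm{dist}$, and then passes directly to a countable dense subset $D$ of the closed set $C$. You instead work straight from the hitting-events characterization supplied by the preceding corollary ($\supp(\Lambda)$ is a random closed set, so $\{\omega: P_\omega\cap K\neq\emptyset\}$ is measurable for compact $K$), which makes the argument self-contained modulo that corollary. More importantly, your two extra reductions are not mere bookkeeping: the exhaustion of $C$ by compact pieces $C_n$ is what makes the supremum of $d(\cdot,P_\omega)$ attained, and only then is ``$d(c,P_\omega)<r$ for all $c$'' equivalent to ``$\sup_{c\in D}d(c,P_\omega)<r$''; for a noncompact closed $C$ that equivalence can genuinely fail (one can have $d(c,P_\omega)<r$ everywhere with supremum exactly $r$), so the paper's one-step reduction is stated too loosely (its displayed $\inf$ should in any case be a $\sup$). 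Your $\varepsilon$-of-room union over $n$ then correctly converts the strict inequality into countably many closed hitting conditions; the only implicit point worth making explicit is that ``$d(c,P_\omega)\le r-\tfrac1n$ iff $P_\omega\cap\overline{B(c,r-\tfrac1n)}\neq\emptyset$'' uses that the infimum defining $d(c,P_\omega)$ is attained, which holds because $P_\omega$ is closed (being uniformly discrete) and $X$ is proper. You also verify equivariance explicitly, which the paper leaves tacit. In short: same underlying idea (countable dense subsets plus the random-closed-set structure of $\supp(\Lambda)$), but your version repairs a gap in the paper's set identity and avoids the appeal to Effros measurability.
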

\begin{proof}
	Define a function $\mathrm{dist}:\Omega\times X\to [0,\infty]$ by $\mathrm{dist}(\omega, x)\coloneqq \inf_{y\in \supp(\Lambda_\omega)}d(x,y)$. By \cite[Theorem 1.3.14]{Molchanov}, the map $\supp(\Lambda)$ is Effros measurable, see \cite[Definition 1.3.1]{Molchanov}. Hence by the proof of \cite[Theorem 1.3.3]{Molchanov}, the function $\mathrm{dist}(\omega, x)$ is jointly measurable.
	Note that 
	\[\supp(\Lambda)^r(\omega) = \{x\in X\mid \mathrm{dist}(\omega,x)< r\}.\]
	If $C\in \mathfrak{F}(X)$, then we can choose a countable dense subset $D\subset C$ (as $X$ is second-countable). Now 
	\[\{\omega\in \Omega\mid C\subset \supp(\Lambda)^r(\omega)\} = \{\omega\in \Omega\mid \inf_{x\in D}\mathrm{dist}(\omega,x)<r\}\]
	is measurable.
\end{proof}

The following definition is a direct generalization of the definition of density for random sphere packings in hyperbolic space, introduced by Bowen and Radin in \cite{Bowen2003}.
\begin{definition}
	The \emph{density} of a $2r$-uniformly discrete point process $\Lambda$ is defined as
	\[D(\Lambda) \coloneqq \mathbb{P}(x_0\in\supp(\Lambda)^r).\]
\end{definition}
\begin{proposition}\label{p:formula_density}
	Let $\Lambda$ be a $2r$-uniformly discrete point process and $x\in X$. Then
	\[D(\Lambda) = \mathbb{E}[\Lambda(B(x, r))] = i(\Lambda)m_X(B(x_0,r)).\]
\end{proposition}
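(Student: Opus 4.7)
The plan is to reduce the density $D(\Lambda)$ to a single-ball count and then apply the intensity lemma.

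First, I would observe that the $2r$-uniform discreteness forces $\Lambda(B(x_0,r))\in\{0,1\}$: if two distinct points $y_1,y_2\in\supp(\Lambda)\cap B(x_0,r)$ existed, then the triangle inequality would give $d(y_1,y_2)<2r$, contradicting the fact that $\supp(\Lambda)\in\UD_{2r}(X)$.

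Next I would identify the events $\{x_0\in\supp(\Lambda)^r\}$ and $\{\Lambda(B(x_0,r))=1\}$. Indeed, $x_0\in\supp(\Lambda)^r=\bigcup_{y\in\supp(\Lambda)}B(y,r)$ iff there exists $y\in\supp(\Lambda)$ with $d(x_0,y)<r$, iff $\Lambda(B(x_0,r))\geq 1$. Combined with the previous step this yields the pointwise identity $\Lambda(B(x_0,r))=\mathbbm{1}_{x_0\in\supp(\Lambda)^r}$. Taking $\mathbb{P}$-expectations gives
\[
\mathbb{E}[\Lambda(B(x_0,r))] = \mathbb{P}(x_0\in\supp(\Lambda)^r) = D(\Lambda).
\]
Then Lemma \ref{l:palm_formula}, applicable by Lemma \ref{l:uniform_discrete_implies_locally_Lp}, gives $\mathbb{E}[\Lambda(B(x_0,r))]=i(\Lambda)\,m_X(B(x_0,r))$, which settles the statement at the basepoint.

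Finally, for arbitrary $x\in X$, choose $g\in G$ with $gx_0=x$; then $B(x,r)=gB(x_0,r)$ by $G$-invariance of $d$, and $\mathbb{E}[\Lambda(gB(x_0,r))]=\mathbb{E}[\Lambda(B(x_0,r))]$ by the stationarity calculation already carried out in the proof of Lemma \ref{l:palm_formula}. Hence $\mathbb{E}[\Lambda(B(x,r))]$ is independent of $x$ and equals both $D(\Lambda)$ and $i(\Lambda)m_X(B(x_0,r))$. There is no substantive obstacle here: the entire content is the dichotomy $\Lambda(B(x_0,r))\in\{0,1\}$ forced by the packing condition, which converts a set-theoretic density into a first moment of a point process.
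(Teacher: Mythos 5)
Your proof is correct and follows essentially the same route as the paper's: identify the event $\{x_0\in\supp(\Lambda)^r\}$ with $\{\Lambda(B(x_0,r))\geq 1\}$, use the $2r$-uniform discreteness to upgrade the indicator to the actual count $\Lambda(B(x_0,r))$, take expectations, and invoke Lemma \ref{l:palm_formula} together with stationarity to handle a general base point $x$. No gaps.
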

\begin{proof}
	Choose $g\in G$ with $g^{-1}x_0=x$ and observe 
	\begin{align*}
		D(\Lambda) &= \mathbb{P}(x_0\in \supp(\Lambda^r)) = \int \chi\{\mathrm{dist}(x_0,\supp(\Lambda))(\omega)\leq r\} d\mathbb{P}(\omega)\\ 
		&= \int \mathbbm{1}\{\supp(\Lambda_\omega)\cap B(x_0,r)\neq \emptyset\}d\mathbb{P}(\omega) =  \int \mathbbm{1}\{\#(\supp(\Lambda_\omega)\cap B(x_0,r))=1\}d\mathbb{P}(\omega)\\
		&= \mathbb{E}[\Lambda(B(x, r))].\qedhere
	\end{align*}
\end{proof}

\begin{definition}\label{d:definition_prob_optimal_density}
	For $r>0$ we define the \emph{probabilistic optimal $r$-packing density of $X$} as
	\[\triangle_{\mathrm{prob}}(r,X)\coloneqq \sup \{D(\Lambda)\mid \Lambda\text{ $2r$-uniformly discrete point process in $X$}\}.\]
\end{definition}

\subsection{Densities of deterministic packings}\label{s:deterministic_packings}
We will now explain how our results about the intensity of $2r$-uniformly discrete point processes relate to the density of (deterministic) sphere packings. We again interpret these point sets as sets of centers of spheres in a sphere packing.

\begin{definition}
	Let $P\subset X$ be $2r$-uniformly discrete.
	\begin{enumerate}[(i)]
		\item The \emph{lower $r$-density of $P$ centered at $x\in X$} is defined as
		\[\underline{D_r}(P,x)\coloneqq \liminf_{R\to \infty}\frac{m_X(B(x,R)\cap \bigcup_{y\in P} B(y,r))}{m_X(B(x,R))}.\]
		\item The \emph{higher $r$-density of $P$ centered at $x\in X$} is defined as
		\[\overline{D_r}(P)\coloneqq \limsup_{R\to \infty}\frac{m_X(B(x,R)\cap \bigcup_{y\in P}B(y,r))}{m_X(B(x,R))}.\]
		\item If $\underline{D_r}(P,x) = \overline{D_r}(P,x)$ we say that \emph{$P$ has $r$-density centered at $x\in X$} and write 
		\[D_r(P,x)\coloneqq \underline{D_r}(P,x) = \overline{D_r}(P,x).\]
		\item If $P$ has $r$-density centered at $x$ for every $x\in X$ and $D_r(P,x)=D_r(P,y)$ for all $x,y\in X$, we say that \emph{$P$ has $r$-density}
		\[D_r(P) \coloneqq D_r(P,x_0).\]
	\end{enumerate}
\end{definition}
The following approach to sphere packing was pioneered by Bowen and Radin \cite{Bowen2003},\cite{Bowen2004}, in the context of hyperbolic spaces; see \cite{HartnickWackenhuthDensity} for terminology and proofs in the general setting.
\begin{definition}
	Let $(G_t)_{t>0}$ be a sequence of subsets of $G$ of positive measures, let $\mathcal{F}$ be a set of measurable functions on $\UD_{2r}(X)$ 
	such that the map \[
		\mathrm{Prob}(\UD_{2r}(X)) \to \C^\mathcal{F},\quad \nu \mapsto (\nu(f))_{f\in \mathcal F}
	\] is injective,
	and let $\mu$ be an ergodic probability measure on $\UD_{2r}(X)$.
	\begin{enumerate}[(i)]
		\item We say that $P_0\in \UD_{2r}(X)$ is $(\mu, \mathcal{F}, (G_t)_{t>0})$-generic, if 
			\[\lim_{t\to\infty} \frac{1}{m_G(G_t)}\int_{G_t}f(gP_0)dm_G(G) = \mu(f)\quad (f\in \mathcal{F}).\]
		\item We say that $P_0\in \UD_{2r}(X)$ is invariantly $(\mu, \mathcal{F}, (G_t)_{t>0})$-generic, if 
			\[\lim_{t\to\infty} \frac{1}{m_G(G_t)}\int_{G_t}f(ghP_0)dm_G(G) = \mu(f)\quad (f\in \mathcal{F}, h\in G).\]
	\end{enumerate}
\end{definition}

\begin{remark}
For the following see \cite{HartnickWackenhuthDensity}.
	\begin{enumerate}[(i)]
		\item If $P_0$ is $(\mu, \mathcal{F}, (G_t)_{t>0})$-generic, then $\mu$ is uniquely determined by $P_0$ (and $\mathcal{F}$ and $(G_t)_{t>0}$) and supported on the orbit closure of $P_0$.
		\item Conversely, every ergodic probability measure $\mu$ on $\UD_{2r}(X)$ is supported on an orbit closure, and if the pointwise ergodic theorem holds for $((G_t)_{t>0}, \mathcal{F})$, then $\mu$-almost every point in this orbit closure is $(\mu, \mathcal{F}, (G_t)_{t>0})$-generic.
		\item If $G$ is amenable and $(G_t)_{t>0}$ is a F\o lner sequence, the existence of an invariantly generic $P_0\in \UD_{2r}(X)$ comes for free. 
			If $G$ is non-amenable, this is not automatically the case. This leads to the following definition.
	\end{enumerate}
\end{remark}
\begin{definition}\label{d:invariant_pointwise_ergodic_theorem}
	We say that the \emph{invariant pointwise ergodic theorem} holds for the pair $((G_t)_{t>0}, \mathcal{F})$ if for every $G$-invariant $\mu\in \mathrm{Prob}(\UD_{2r}(X))$ on $\UD_{2r}(X)$ there is a conull set $\Omega_0\subset \UD_{2r}(X)$ of invariantly $(\mu, \mathcal{F}, (G_t)_{t>0})$-generic point sets.
\end{definition}

From now on we fix $G_t\coloneqq \pi^{-1}(B(x_0, t))$ to be preimages of balls in $X$ and choose $\mathcal{F}$ to be the set of all Riemann integrable compactly supported functions on $\UD_{2r}(X)$.

\begin{example}\label{e:Gelfand_pairs_with_invariant_pointwise_ergodic_theorem}
	The following Gelfand pairs have invariant pointwise ergodic theorems.
	\begin{enumerate}[(i)]
		\item $(\R^n, \{0\})$ and $(\R^n\rtimes SO(n), SO(n))$, both with the Euclidean metric, by the ergodic theorem for amenable groups.
		\item $(\mathcal{H}_n\rtimes U(n), U(n))$ with $d$ the Cygan-Koranyi metric on $\mathcal{H}_n$, by the ergodic theorem for amenable groups.
		\item $(G, K)$ such that $G$ is a (connected) simple Lie group with finite center and no compact factors and $K\subset G$ is a maximal compact subgroup with $d$ the Cartan-Killing distance on $G/K$. This follows from \cite[Theorem 1.2.]{GorodnikNevo} by Gorodnik and Nevo.
	\end{enumerate}
\end{example}

\begin{definition}\label{d:generically_measured}
	We say that $P_0\in \UD_{2r}(X)$ is \emph{generically measured} if there exists an ergodic $\mu_{P_0}\in \mathrm{Prob}(\UD_{2r}(X))$ (necessarily unique) such that $P_0$ is invariantly $(\mu_{P_0}, \mathcal{F}, (G_t)_{t>0})$-generic. 
\end{definition}
\begin{example} See Subsection \ref{s:Examples} for more in-depth discussion of the following examples.
\begin{enumerate}[(i)]
	\item If $P_0\in \UD_{2r}(X)$ is the orbit of a lattice in $G$ and $(G,K)$ is one of the Gelfand pairs in Example \ref{e:Gelfand_pairs_with_invariant_pointwise_ergodic_theorem}, then $P_0$ is generically measured.
	\item Similarly, if $P_0\in \UD_{2r}(X)$ consists of multiple orbits of a lattice $\Gamma < G$, then $P_0$ is generically measured.
	\item If $P_0\in \UD_{2r}(X)$ is the orbit of a regular cut-and-project set (see Example \ref{e:cut-and-project sets} below), then $P_0$ is generically measured.
\end{enumerate}
\end{example}
For a proof of the following proposition see Bowen and Radin, \cite{Bowen2004}; in the specific form given here we give a proof in \cite{HartnickWackenhuthDensity}.
\begin{proposition}
	If $P_0\in \UD_r(X)$ is generically measured, then there exists a stationary point process $\Lambda$ with distribution $\mu_{P_0}$, and for any such process $\Lambda$ and every $x\in X$ we have 
	\[D(\Lambda^r) = D_r(P_0, x).\]
	In particular, $P_0$ has a well-defined $r$-density which is independent of the base point.
\end{proposition}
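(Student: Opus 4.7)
The plan is to realize $\Lambda$ as the canonical point process on the probability space $(\Omega, \mathbb{P}) = (\UD_{2r}(X), \mu_{P_0})$, equipped with its natural $G$-action; concretely, $\Lambda$ sends $P \in \UD_{2r}(X)$ to $\sum_{x \in P}\delta_x \in \mathcal{N}^*_{2r}(X)$, which is well-defined by the canonical homeomorphism $\mathcal{N}^*_{2r}(X) \cong \UD_{2r}(X)$ recorded above. The resulting $\Lambda$ is a stationary $2r$-uniformly discrete point process of distribution $\mu_{P_0}$. For any other stationary point process $\Lambda'$ with the same distribution, the quantity $D(\Lambda'^r) = \mathbb{P}(x_0 \in \supp(\Lambda')^r)$ depends only on the law of $\Lambda'$, so it suffices to treat the canonical $\Lambda$.

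Next, I would translate the volume ratio defining $D_r(P_0, x)$ into a group-theoretic ergodic average. Fix $h \in G$ with $hx = x_0$ and set $f(P) \coloneqq \mathbbm{1}\{x_0 \in \bigcup_{z \in P} B(z, r)\}$, a bounded function on the compact space $\UD_{2r}(X)$. The $G$-invariance of $d$ and $m_X$ gives $m_X(B(x, R) \cap \bigcup_{z \in P_0} B(z, r)) = m_X(B(x_0, R) \cap \bigcup_{z \in hP_0} B(z, r))$, and an application of the Weil disintegration formula to the right-$K$-invariant set $G_R = \pi^{-1}(B(x_0, R))$, together with the substitution $y = g^{-1}$ and unimodularity of $G$, yields
\[\frac{m_X(B(x, R) \cap \bigcup_{z \in P_0} B(z, r))}{m_X(B(x, R))} = \frac{1}{m_G(G_R)} \int_{G_R} f(ghP_0) \, dm_G(g).\]

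Since $P_0$ is invariantly $(\mu_{P_0}, \mathcal{F}, (G_t)_{t>0})$-generic and $f$ belongs to $\mathcal{F}$, the right-hand side converges as $R \to \infty$ to $\mu_{P_0}(f)$, which is independent of $h$. This simultaneously shows that $D_r(P_0, x)$ exists for every $x \in X$, that its value does not depend on $x$, and that it equals $\mu_{P_0}(f) = \mathbb{P}(x_0 \in \supp(\Lambda)^r) = D(\Lambda^r)$, completing the proof.

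The main technical point I expect to be the real obstacle is verifying that $f \in \mathcal{F}$, i.e.\ that $f$ is Riemann integrable with respect to $\mu_{P_0}$: the set of discontinuities of $f$ lies in $\{P \in \UD_{2r}(X) : \min_{z \in P} d(x_0, z) = r\}$, which must be shown to be $\mu_{P_0}$-null. When such a zero-measure statement is not immediately available, the standard remedy is to sandwich $f$ between continuous approximants $f^- \leq f \leq f^+$ defined using slightly smaller and larger radii, apply the invariant pointwise ergodic theorem to each, and then let the radii collapse back to $r$; the volume of the symmetric difference of the associated balls vanishes in the limit, which forces the two ergodic averages to agree and pins down $D_r(P_0, x)$ unambiguously.
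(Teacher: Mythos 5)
Your argument is correct and is essentially the standard Bowen--Radin genericity argument; the paper itself gives no in-text proof of this proposition (it defers to \cite{Bowen2004} and \cite{HartnickWackenhuthDensity}), but your route --- take the canonical process on $(\UD_{2r}(X),\mu_{P_0})$, unfold the volume ratio over $G_R=\pi^{-1}(B(x_0,R))$ via the Weil formula and unimodularity, and invoke invariant genericity --- is exactly the intended one. Two small points to make airtight. First, your sandwich functions $f^\pm$ must be genuinely continuous on $\UD_{2r}(X)$, e.g.\ $f^\pm_\epsilon=\psi^\pm_\epsilon\circ\mathrm{dist}(x_0,\cdot)$ with continuous cutoffs $\psi^-_\epsilon\leq \mathbbm{1}_{[0,r)}\leq\psi^+_\epsilon$ supported near $[0,r]$; indicators with shifted radii are still discontinuous and need not lie in $\mathcal{F}$. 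Second, closing the sandwich requires $\mu_{P_0}(f^+_\epsilon-f^-_\epsilon)\to 0$, which you get from Markov's inequality and Lemma \ref{l:uniform_discrete_implies_locally_Lp}: this difference is at most $\mathbb{E}\bigl[\Lambda(\{y: r-\epsilon\leq d(x_0,y)\leq r+\epsilon\})\bigr]=i(\Lambda)\,m_X(\{r-\epsilon\leq d(x_0,\cdot)\leq r+\epsilon\})$, and the limit as $\epsilon\to 0$ vanishes precisely when the sphere $\{y: d(x_0,y)=r\}$ is $m_X$-null. That holds in all the paper's examples but is an implicit hypothesis on the invariant metric $d$ that you should state rather than assert that ``the volume of the symmetric difference vanishes'' unconditionally.
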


\begin{definition}\label{d:Bowen_Radin_optimal_density}
The \emph{Bowen-Radin optimal density} $\triangle(r,X)$ is defined as 
\[\triangle(r,X):=\sup \{D(P_0, x) \mid x\in X, P_0\in \UD_r(X) \text{ generically measured}\}.\]
\end{definition}

\begin{corollary}\label{c:equiv_defs_optimal_density}
	If the invariant pointwise ergodic theorem holds for $((G_t)_{t>0}, \mathcal{F})$, then 
	\[\triangle(r,X) = \triangle_{\mathrm{prob}}(r,X).\]
\end{corollary}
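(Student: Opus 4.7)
The plan is to prove the two inequalities separately.

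For $\triangle(r,X) \leq \triangle_{\mathrm{prob}}(r,X)$, the main work has already been done in the proposition preceding Definition \ref{d:Bowen_Radin_optimal_density}. Given any generically measured $P_0\in \UD_{2r}(X)$, that proposition produces a stationary $2r$-uniformly discrete point process $\Lambda$ whose distribution is $\mu_{P_0}$ and satisfies $D_r(P_0,x)=D(\Lambda^r)$ for every $x\in X$. Since $D(\Lambda^r)\leq \triangle_{\mathrm{prob}}(r,X)$ by the very definition of $\triangle_{\mathrm{prob}}$, taking the supremum over all such $P_0$ and $x$ yields the desired inequality. This step does not use the invariant pointwise ergodic theorem.

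For $\triangle_{\mathrm{prob}}(r,X) \leq \triangle(r,X)$, the idea is to reduce an arbitrary stationary $2r$-uniformly discrete point process $\Lambda$ to its ergodic components and then use the invariant pointwise ergodic theorem to realize each component by a generically measured point set. More precisely, let $\mu$ be the pushforward of $\mathbb{P}$ under $\supp(\Lambda)$, which is a $G$-invariant probability measure on the compact metrizable space $\UD_{2r}(X)$. Proposition \ref{p:formula_density} rewrites
\[D(\Lambda) = \mathbb{P}(x_0\in\supp(\Lambda)^r)=\int_{\UD_{2r}(X)}\mathbbm{1}_{\{x_0\in P^r\}}\,d\mu(P).\]
Applying the ergodic decomposition of $\mu$ on this standard Borel space gives $\mu=\int \mu_\xi\,d\pi(\xi)$ with each $\mu_\xi$ a $G$-invariant ergodic probability measure on $\UD_{2r}(X)$, so
\[D(\Lambda)=\int \mu_\xi\bigl(\{P: x_0\in P^r\}\bigr)\,d\pi(\xi).\]
By the invariant pointwise ergodic theorem applied to each $\mu_\xi$, there is a conull set of invariantly $(\mu_\xi,\mathcal{F},(G_t)_{t>0})$-generic (hence generically measured) point sets $P_0$. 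Picking one such $P_0$, the proposition before Definition \ref{d:Bowen_Radin_optimal_density} gives
\[\mu_\xi\bigl(\{P: x_0\in P^r\}\bigr)=D(\Lambda_\xi^r)=D_r(P_0,x_0)\leq \triangle(r,X),\]
where $\Lambda_\xi$ is a point process with distribution $\mu_\xi$. Substituting this bound back into the ergodic decomposition integral and using that $\pi$ is a probability measure yields $D(\Lambda)\leq \triangle(r,X)$, and taking the supremum over $\Lambda$ completes the proof.

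The only mild subtlety is the measurability step in the ergodic decomposition argument: one needs $\xi\mapsto \mu_\xi(\{P:x_0\in P^r\})$ to be measurable in order to integrate, but this follows from the standard Borel structure on $\UD_{2r}(X)$ (Proposition \ref{prop:N_star_std_Borel}) together with the standard measurability properties of the ergodic decomposition on standard Borel $G$-spaces. The main conceptual obstacle is therefore the invocation of the invariant pointwise ergodic theorem, which is precisely what the hypothesis supplies; everything else is bookkeeping from earlier results in this section.
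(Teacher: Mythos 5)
Your proof is correct and follows exactly the argument the paper leaves implicit: the inequality $\triangle(r,X)\leq\triangle_{\mathrm{prob}}(r,X)$ comes straight from the proposition preceding Definition \ref{d:Bowen_Radin_optimal_density}, and the reverse inequality uses ergodic decomposition of the law of $\supp(\Lambda)$ on the standard Borel space $\UD_{2r}(X)$ together with the invariant pointwise ergodic theorem to realize each ergodic component by a generically measured point set. Your measurability remark and the observation that only the ergodic components need the hypothesis are both accurate; no gaps.
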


\begin{remark}
	$\triangle(r,\R^n)$ agrees with the usual definition of the optimal packing density of $\R^n$, as every periodic sphere packing is generically measured.
\end{remark}

%The results above motivate us to make the following definition.
%\begin{definition}\label{d:convenient_Gelfand_pair}
%	Let $G$ be a Lie group such that $K$ is a compact subgroup. 
%	A \emph{convenient Gelfand pair} is a tuple $(G,K, d, \mathcal{S}(G, K))$ such that
%	\begin{enumerate}
%		\item $(G, K)$ is a Gelfand pair,
%		\item $d$ is a $G$-invariant, complete, proper and continuous metric on $G/K$,
%		\item $\mathcal{S}(G,K)$ is a choice of Schwartz-like function space for $(G,K)$,
%		\item $(G,K,d)$ has a invariant pointwise ergodic theorem.
%	\end{enumerate}
%	We will sometimes also call $X=G/K$ a \emph{convenient commutative space}, leaving the tuple $(G,K, d, \mathcal{S}(G, K))$ implicit.
%\end{definition}

\begin{remark}
	If $G$ is compact, there is a very tight connection between $2r$-uniformly discrete point processes $\Lambda:(\Omega, \mathbb{P})\to \mathcal{N}^*_{2r}(X)$ such that $\mathbb{P}$ is ergodic and $2r$-uniformly discrete point sets.
	
	If $P$ is a uniformly discrete point set in $X$, then $G.P = \overline{G.P}\subset \UD_{2r}(X)$, by the compactness of $G$ and the continuity of the $G$-action. Hence there is a unique $G$-invariant measure $\mu_P$ on $G.P$ induced by $m_{G/\mathrm{Stab}_G(P)}$ and $\Lambda_P: G.P \to \mathcal{N}_r^*(X),\ P'\mapsto \sum_{x\in P'}\delta_x$ is a point process such that $\mathbb{P}=\mu_P$ is ergodic.
	
	If  $\Lambda:(\Omega, \mathbb{P})\to \mathcal{N}^*_{2r}(X)$ is a  $2r$-uniformly discrete point processes such that $\mathbb{P}$ is ergodic, then $\Lambda_*\mathbb{P}$ is an ergodic measure on $\mathcal{N}_r^*(X)$. As $\mathcal{N}_r^*(X)$ is a standard Borel space and the orbits of $G$ are closed, \cite[Proposition 2.1.12]{Zimmer} and \cite[Proposition 2.1.10]{Zimmer} imply that $\Lambda_*\mathbb{P}$ is supported on a $G$-orbit, i.e. that there is some $P\in \UD_{2r}(X)$ such that $\Lambda_P$ has the same distribution as $\Lambda$.
\end{remark}
\subsection{Examples}\label{s:Examples}
\begin{example}[Lattices]
One example for $r$-uniformly discrete point processes in homogeneous spaces is given by lattices.
Let $\Gamma\subset G$ be a lattice, i.e. a discrete subgroup such that there is a unique $G$-invariant measure $m_{G/\Gamma}$ on $G/\Gamma$ such that 
\[\int f(g)dm_G(g) = \int_{G/\Gamma} \sum_{\gamma\in \Gamma}f(g\gamma)dm_\Gamma(\gamma)dm_{G/\Gamma}(g\Gamma)\]
for all $f\in C_c(G)$. We set $\mathrm{covol}(\Gamma)\coloneqq m_{G/\Gamma}(G/\Gamma)$ and 
obtain a probability measure $\mathbb{P}=\frac{1}{\mathrm{covol}(\Gamma)}m_{G/\Gamma}$ on $\Omega\coloneqq G/\Gamma$. Consider the map
\[\Lambda^\Gamma:\Omega\to \mathcal{N}^*(X),\quad g\Gamma\mapsto \sum_{x\in g\Gamma.x_0}\delta_x.\]
This is a point process, see for instance \cite[Example 2.2]{MichaelMattiasArXiv}.
Choose an open set $U\subset G$ with $U\cap \Gamma = \Gamma\cap K$. Then $\pi(U)\cap \Gamma.x_0 = \pi(U\cap \Gamma) = x_0$. As $\pi(U)$ is open, we can choose $r>0$ with $B(x_0,r)\subset \pi(U)$ and see that $\Gamma.x_0$ is $r$-uniformly discrete, as
\[
\#(B(\gamma.x_0,r)\cap \Gamma) = \#(B(x_0,r)\cap \gamma^{-1}\Gamma) = \#(B(x_0,r)\cap \Gamma) = 1
\]
for all $\gamma\in \Gamma$.
This implies that $\Lambda^\Gamma$ is $r$-uniformly discrete.
By \cite[Example 2.5]{MichaelMattiasArXiv} the intensity of $\Lambda^\Gamma$ is given by 
\[i(\Lambda^\Gamma) = \frac{1}{\#(\Gamma\cap K)\mathrm{covol}(\Gamma)}\]
and the reduced autocorrelation measure by
\[\eta_\Lambda(f)=i(\Lambda^\Gamma)\left(\frac{1}{\#(\Gamma\cap K)}\sum_{g\in\Gamma}f(g)- i(\Lambda^\Gamma)m_G(f)\right).\]
\end{example}

\begin{example}[Cut-and-project sets]\label{e:cut-and-project sets}
A second class of $r$-uniformly discrete point processes, related to the theory of quasicrystals, is given by orbits of regular cut-and project sets. This class of examples was introduced in the generality we need by Björklund, Hartnick and Pogorzelski in \cite{BHP1, BHP2, BHP3}. See also \cite[Section 7]{BHSiegelTransform} for the perspective we present in the following.
Let $H$ be a lcsc group, $\Gamma\leq G\times H$ a lattice such that the projection $\mathrm{pr}_G:G\times H\to G$ restricted to $\Gamma$ is injective and the projection $\mathrm{pr}_H$ to $H$ satisfies that $\mathrm{pr}_H(\Gamma)$ is dense in $H$. Choose a pre-compact Jordan-measurable subset $W\subset H$ with dense interior and define the \emph{cut-and-project set} \[P_0(g, h) = \mathrm{pr}_G((g,h)\Gamma\cap G\times W).\]
Then for almost all choices $(g, h)\in G\times H$ there is a $G$-invariant Borel probability measure $\mu_{P_0}$ on the \emph{hull} $\Omega_{P_0(g,h)}=\overline{G.\pi(P_0(g,h))}\setminus \{\emptyset\}\subset\UD_{2r}(X)$ (here $r$ depends on the choice of $\Gamma$, $W$) and a factor map
\[((G\times H)/\Gamma,\frac{1}{\mathrm{covol}(\Gamma)}m_{G\times H/\Gamma})\to (\UD_{2r}(X),\mu_{P_0}),\  x\mapsto Y_x,\]
where $Y_x= \{gK\in G/K\mid (g^{-1},e)x\in (K\times W)/\Gamma\}$.
If $W$ is regular, i.e. $\partial W \cap \Gamma = \emptyset$ and $\mathrm{Stab}_H(W)=\{e\}$, then $P_0\coloneqq P_0(e,e)$ is called a \emph{regular model set}, and by \cite{BHP1} the hull $\Omega_{P_0(e,e)}$ is uniquely ergodic and each of its elements is generically measured.

In this case by \cite{BHP1} the autocorrelation of $P_0$ (or rather $\pi(P_0)$) is given by

\[\eta_{\mu_{P_0}}(f)\coloneqq \lim_{t\to \infty}\frac{1}{m_G(G_t)}\sum_{g\in P_0\cap G_t}\sum_{y\in P_0} f(x^{-1}y),\]
if $\pi|_{P_0}$ is injective (if not then one needs to work with weighted point sets, see \cite{BHP2}).
A formula for the spherical diffraction of these sets can be found in \cite{BHP3}.
\end{example}

The study of model sets led us to the proof of our main theorem. We first proved the bounds conjectured by Cohn and Zhao for sphere packings coming from model sets by use of exotic Poisson summation. We then noticed that only the existence of an atom encoding the density at the trivial character was relevant, and that the rest of the summation structure on the spectral side was irrelevant.

\begin{example}[Thinned Poisson point processes]
In the Euclidean case $X=\R^n$, Campos, Jenssen, Michelen and Sahasrabudhe showed in \cite{EuclideanLowerArXiv} that thinning Poisson processes in a controlled way yields $r$-uniformly discrete point processes in $\R^n$ of high intensity. They then used these processes to prove a new asymptotically best lower bound on densities of sphere packings in $\R^n$.

In \cite{HyperbolicLowerArXiv} this approach was generalized to the case $X=\mathbb{H}^n$ of hyperbolic $n$-space by Fernández, Kim, Liu and Pikhurko.
\end{example}
\section{General linear programming bound}\label{s:bounds}
In this section we will prove the linear programming bound for the optimal packing density of convenient commutative spaces.

\subsection{Spherical diffraction}
For this subsection we only impose the assumptions from \ref{s:general assumptions}. Recall that we assume that $(G, K)$ is a Lie group Gelfand pair and that $X=G/K$ is equipped with a complete, proper, continuous and $G$-invariant metric $d$.
Let $\Lambda$ be a locally square integrable point process in $X$.
\begin{definition}
The positive Borel measure $\widehat{\eta}_\Lambda^+$ on $PS(G,K)$ is called the \emph{spherical diffraction} of $\Lambda$ and the positive Borel measure $\widehat{\eta}_\Lambda$ on $PS(G,K)$ is called the \emph{reduced spherical diffraction}.
\end{definition}
Note that the measures $\widehat{\eta}_\Lambda^+$ and $\widehat{\eta}_\Lambda$  exist by Theorem \ref{t:Godement-Plancherel_for_measures}.
\begin{remark}
	It is possible to define more general notions of diffraction for all type 1 unimodular Lie groups (with a fixed compact subgroup $K$) by applying Bonnets Plancherel theorem \cite{BonnetPlancherelTheorem} to the positive definite distribution induced by $\eta_\Lambda^+$. Other available Plancherel theorems include Penneys Plancherel theorem \cite{DistributionalPlancherelPenney} for distributions.
	When the center of the universal enveloping algebra of the Lie algebra of $G$ contains an elliptic element, then a more concrete Plancherel theorem by Aarnes is available, see \cite{EllipticCentralPlancherel}.
	
	In \cite{Thomas} Thomas proves a Bochner theorem for positive-definite spherical distributions on generalized Gelfand pairs $(G,H)$. In fact, Thomas shows that the existence of such a Bochner theorem is equivalent to $(G,H)$ being a generalized Gelfand pair, see also {\cite[Proposition 8.3.2]{vanDijk}} for a textbook account.
	
	We will not pursue these matters for now, as they will not lead to workable linear programming bounds for packings.
\end{remark}
\begin{lemma}\label{l:formula_diffraction_and_reduced_diffraction}
	We have
	\[\widehat{\eta}_\Lambda^+ = \widehat{\eta}_\Lambda + i(\Lambda)^2\delta_\mathbf{1}.\]
\end{lemma}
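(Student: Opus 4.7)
The plan is to reduce the identity to two separate facts: the decomposition $\eta_\Lambda^+ = \eta_\Lambda + i(\Lambda)^2 m_G$ at the level of measures, and the explicit identification $\widehat{m_G} = \delta_{\mathbf{1}}$. Once these are in place, the result follows from the uniqueness statement in the Plancherel--Godement theorem (Theorem \ref{t:Godement-Plancherel_for_measures}).

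First, I would check that $m_G$ is itself a positive-definite $K$-spherical Borel measure, so that its Godement transform $\widehat{m_G}$ is defined. For $f \in C_c(G)$, a short computation using left-invariance and unimodularity of $G$ (the latter being a consequence of $(G, K)$ being a Gelfand pair) yields $m_G(f^**f) = |m_G(f)|^2 \geq 0$, so $m_G$ is positive-definite; bi-$K$-invariance of Haar measure gives $K$-sphericality.

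Second, I would identify $\widehat{m_G}$ directly. For $f, g \in C_c(G, K)$, Fubini and unimodularity give $m_G(f * g^*) = m_G(f)\,\overline{m_G(g)}$. Since the trivial character $\mathbf{1}$ lies in $PS(G, K)$ and $\widehat{f}(\mathbf{1}) = \int_G f(g)\,dm_G(g) = m_G(f)$, this can be rewritten as
\[
m_G(f * g^*) = \widehat{f}(\mathbf{1})\,\overline{\widehat{g}(\mathbf{1})} = \int_{PS(G,K)} \widehat{f}\,\overline{\widehat{g}}\; d\delta_{\mathbf{1}}.
\]
Condition (i) of Theorem \ref{t:Godement-Plancherel_for_measures} is automatic because $\delta_{\mathbf{1}}$ is a finite measure and $\widehat{f}$ is bounded, so the uniqueness part of that theorem gives $\widehat{m_G} = \delta_{\mathbf{1}}$.

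Finally, I would conclude by uniqueness. Set $\nu := \widehat{\eta}_\Lambda + i(\Lambda)^2\,\delta_{\mathbf{1}}$. Condition (i) of the Plancherel--Godement theorem for $\nu$ follows from the same condition for $\widehat{\eta}_\Lambda$ together with the boundedness of $\widehat{f}$ near $\mathbf{1}$ and finiteness of $\delta_{\mathbf{1}}$. Condition (ii) is obtained by adding the defining identities for $\widehat{\eta}_\Lambda$ and $\widehat{m_G}$ applied to $f*g^*$, giving
\[
\eta_\Lambda^+(f*g^*) = \eta_\Lambda(f*g^*) + i(\Lambda)^2\, m_G(f*g^*) = \widehat{\eta}_\Lambda\bigl(\widehat{f}\,\overline{\widehat{g}}\bigr) + i(\Lambda)^2\,\delta_{\mathbf{1}}\bigl(\widehat{f}\,\overline{\widehat{g}}\bigr) = \nu\bigl(\widehat{f}\,\overline{\widehat{g}}\bigr).
\]
The uniqueness in Theorem \ref{t:Godement-Plancherel_for_measures} then forces $\widehat{\eta}_\Lambda^+ = \nu$, which is the asserted identity. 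There is no genuine obstacle: the statement is, at heart, the linearity of the Godement transform combined with the elementary computation $\widehat{m_G} = \delta_{\mathbf{1}}$; the only small care point is ensuring one has three honest positive-definite measures to feed into Godement's theorem, which is either asserted in the previous proposition (for $\eta_\Lambda^\pm$) or verified directly here (for $m_G$).
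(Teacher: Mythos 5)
Your proof is correct and follows essentially the same route as the paper: decompose $\eta_\Lambda^+ = \eta_\Lambda + i(\Lambda)^2 m_G$, identify $\widehat{m_G} = \delta_{\mathbf{1}}$ by evaluating on $C_c(G,K)^2$, and conclude via the uniqueness in the Plancherel--Godement theorem. The extra checks you include (positive-definiteness of $m_G$ and condition (i) for the candidate measure) are fine but not needed beyond what the paper already records.
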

\begin{proof}
	We have $\eta_\Lambda^+ = \eta_\Lambda+i(\Lambda)^2m_G$. The uniqueness of the Plancherel-Godement transform in Theorem \ref{t:Godement-Plancherel_for_measures} implies that $\widehat{m_G} = \delta_\mathbf{1}$, as
	\[\delta_\mathbf{1}(\widehat{f}) = \widehat{f}(\mathbf{1}) = \int f(g)\mathbf{1}(g^{-1})dm_G(g) = m_G(f)\]
	for all $f\in C_c(G,K)^2$.
	The uniqueness in Theorem \ref{t:Godement-Plancherel_for_measures} then implies the claim.
\end{proof}

\subsection{The linear programming bound}
Assume that $(G,K, d, \mathcal{S}(G, K))$ is a convenient Gelfand pair.
\begin{example}
	Recall that the following algebras are Schwartz-like:
	\begin{enumerate}[(i)]
		\item $C_c(G,K)^2=\lspan\{f*g\mid f,g\in C_c(G,K)\}$ for a general Lie group Gelfand pair.
		\item $\mathscr{S}(\R^n)$.
		\item $\mathscr{S}^1(G,K)$, where $G$ is a semisimple Lie group with finite center and no compact factors, $K$ a maximal compact subgroup.
		\item $C^\infty(G,K)$ for Riemannian symmetric pairs $(G,K)$ of compact type.
		\item $\mathscr{S}(\mathcal{H}_n\rtimes U(n), U(n))$.
	\end{enumerate}
\end{example}
%\begin{definition}
%	Let $\mathcal{S}(G,K)$ denote a Schwartz-like algebra.
%	We say that $f:G\to \C$ is a \emph{witness function} for $r>0$ if 
%	\begin{enumerate}[(W1)]
%		\item $f(g)\leq 0$ if $d(gx_0,x_0)\geq 2r$,
%		\item $\widehat{f}\geq 0$ and $\widehat{f}(\mathbf{1})>0$,
%		\item $f\in \mathcal{S}(G,K)$.
%	\end{enumerate}
%	We denote the space of witness functions for $r$ by $\mathcal{W}_r(X)$.
%\end{definition}
\begin{theorem}\label{t:lp_bound}
	Let $(G, K, d, \mathcal{S}(G, K))$ be a convenient commutative space and let $\Lambda$ be a $2r$-uniformly discrete point process in $X = G/K$. Assume that $f$ is a witness function.
	Then
	\begin{align*}
		\label{e:general_estimate}i(\Lambda)\leq \frac{f(e)}{\widehat{f}(\mathbf{1})}.
	\end{align*}
	Hence 
	\[\triangle(r,X)\leq m_X(B(x_0,r))\frac{f(e)}{\widehat{f}(\mathbf{1})}.\]
\end{theorem}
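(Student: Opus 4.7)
The plan is to play the geometric summation formula defining $\eta_\Lambda^+$ against the spectral identity $T_{\eta_\Lambda^+}(f) = \widehat{\eta}_\Lambda^+(\widehat{f})$ coming from Plancherel--Godement (extended to $\mathcal{S}(G,K)$ by axiom (iii) of a Schwartz-like space). The signs in (W1) and (W2) are tailored so that these two expressions sandwich a multiple of $i(\Lambda)$; a passage from $i(\Lambda)$ to $\triangle(r,X)$ via Proposition~\ref{p:formula_density} and Corollary~\ref{c:equiv_defs_optimal_density} then yields the stated bound.

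First I would treat the case of $h \in C_c^\infty(G,K)$ satisfying $h(g) \leq 0$ whenever $d(gx_0, x_0) \geq 2r$. By $G$-invariance of $d$ we have $d(\sigma(x)^{-1}\sigma(y)\,x_0, x_0) = d(x,y) \geq 2r$ for all distinct $x, y \in \supp(\Lambda)$, since $\Lambda$ is $2r$-uniformly discrete; so every off-diagonal term in
\[
\eta_\Lambda^+(h) = \mathbb{E}\!\left[\sum_{x, y \in \supp(\Lambda)} h(\sigma(x)^{-1}\sigma(y))\, b(x)\right]
\]
is nonpositive, and only the diagonal $x=y$ (where $\sigma(x)^{-1}\sigma(x) = e$) can contribute positively. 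Thus $\eta_\Lambda^+(h) \leq h(e)\,\mathbb{E}[\Lambda(b)] = h(e)\,i(\Lambda)$, using Lemma~\ref{l:palm_formula} together with $m_X(b) = 1$.

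On the spectral side, Lemma~\ref{l:formula_diffraction_and_reduced_diffraction} gives $\widehat{\eta}_\Lambda^+ = \widehat{\eta}_\Lambda + i(\Lambda)^2 \delta_{\mathbf{1}}$. Since $\widehat{\eta}_\Lambda$ is a positive measure and $\widehat{f} \geq 0$ by (W2),
\[
\widehat{\eta}_\Lambda^+(\widehat{f}) \geq i(\Lambda)^2\, \widehat{f}(\mathbf{1}).
\]
Were $f$ already in $C_c^\infty(G,K)$, then axiom (iii) would give $\eta_\Lambda^+(f) = \widehat{\eta}_\Lambda^+(\widehat{f})$, and combining with the geometric bound would immediately yield $i(\Lambda)^2 \widehat{f}(\mathbf{1}) \leq f(e)\,i(\Lambda)$, hence $i(\Lambda) \leq f(e)/\widehat{f}(\mathbf{1})$.

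The main obstacle, and the reason the Schwartz-like structure is needed, is that a general witness function is merely in $\mathcal{S}(G,K)$. I would transfer the compactly supported bound to this setting using the approximants $g_n \in C_c^\infty(G,K)$ guaranteed by axiom (i): $g_n f \in C_c^\infty(G,K)$, $g_n f \to f$ in $\mathcal{S}(G,K)$, $0 \leq g_n \leq 1$, and $g_n(e) = 1$. Because $g_n \geq 0$, the product $g_n f$ inherits (W1) from $f$, and $(g_n f)(e) = f(e)$; hence the compactly supported inequality applies and gives $\eta_\Lambda^+(g_n f) \leq f(e)\,i(\Lambda)$ for every $n$. By axiom (iii) the functional $T_{\eta_\Lambda^+}$ is continuous on $\mathcal{S}(G,K)$ and agrees with $\eta_\Lambda^+$ on $C_c^\infty(G,K)$, so passing $n \to \infty$ yields
\[
\widehat{\eta}_\Lambda^+(\widehat{f}) \;=\; T_{\eta_\Lambda^+}(f) \;=\; \lim_{n} \eta_\Lambda^+(g_n f) \;\leq\; f(e)\, i(\Lambda).
\]
Combined with the spectral lower bound this gives $i(\Lambda) \leq f(e)/\widehat{f}(\mathbf{1})$. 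Multiplying by $m_X(B(x_0, r))$, invoking Proposition~\ref{p:formula_density} to rewrite $D(\Lambda^r) = i(\Lambda)\,m_X(B(x_0, r))$, and taking the supremum over $\Lambda$ via Corollary~\ref{c:equiv_defs_optimal_density}, we obtain the bound on $\triangle(r, X)$.
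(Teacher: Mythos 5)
Your proposal is correct and follows essentially the same route as the paper: the diagonal/off-diagonal split of the averaged summation formula under (W1), the spectral lower bound $\widehat{\eta}_\Lambda^+(\widehat{f})\geq i(\Lambda)^2\widehat{f}(\mathbf{1})$ from Lemma \ref{l:formula_diffraction_and_reduced_diffraction} and (W2), and the passage from compactly supported approximants $g_nf$ to general $f\in\mathcal{S}(G,K)$ via axiom (iii) of Definition \ref{d:Schwartz-like}. The only cosmetic difference is that you phrase the geometric estimate for a general $h\in C_c^\infty(G,K)$ first, whereas the paper runs the same computation directly on $f$ and then on $f_n=g_nf$.
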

\begin{proof}
	Lemma \ref{l:formula_diffraction_and_reduced_diffraction} implies that 
	\[
	\widehat{\eta}_\Lambda^+(\widehat{f}) = \widehat{\eta}_\Lambda(\widehat{f}) + i(\Lambda)^2\widehat{m_G}(\widehat{f}) \geq i(\Lambda)^2\widehat{f}(\mathbf{1}),
	\]
	where we have used condition (W2) and the fact that $\widehat{\eta}_\Lambda$ is a positive measure.
	Assume first that the support of $f$ is compact.
	Fix $R>0$ such that $m_X(B(x_0,R)) = 1$ and set $b\coloneqq\chi_{B(x_0,R)}$. We set $P\coloneqq \supp(\Lambda)$.
	Then
	\begin{align*}
		\eta_\Lambda^+(f) &= \mathbb{E}\left[\sum_{y\in P}\sum_{x\in P\cap B(x_0,R)}f(\sigma(x)^{-1}\sigma(y))\right]
		\leq \mathbb{E}\left[\sum_{x\in P\cap B(x_0,R)}f(\sigma(x)^{-1}\sigma(x))\right]\\
		&= \mathbb{E}\left[f_n(e)\sum_{x\in P\cap B(x_0,R)}1\right] = f(e)\mathbb{E}\left[\#(P\cap B(x_0,R))\right] = f_n(e)\mathbb{E}\left[\Lambda(B(x_0,R))\right]\\ 
		&= f(e)i(\Lambda) m_{X}(B(x_0,R)) = f(e)i(\Lambda),
	\end{align*}
	using property (W1) for the inequality and Lemma \ref{l:palm_formula} in the second to last equality.
	Hence $\widehat{\eta}_\Lambda^+(\widehat{f})\leq f(e)i(\Lambda)$.
	
	If the support of $f$ is non-compact, set $f_n\coloneqq g_nf$, where $(g_n)_{n\geq 1}$ is a sequence in $C_c^\infty(G, K)$ as in  Definition \ref{d:Schwartz-like}.
	Note that the functions $f_n$ satisfy (W1) and that $f(e) = f_n(e)$ for all $n$.
	By the calculation above 
	\[\widehat{\eta}_\Lambda^+(\widehat{f}) = T_{\eta_\Lambda^+}(f) = \lim_{n\to \infty} \eta_\Lambda^+(f_n)\leq \lim_{n\to \infty} i(\Lambda)f_n(e) = i(\Lambda)f(e).\]
	Thus in either case
	 \[i(\Lambda)f(e)\geq\ \widehat{\eta}_\Lambda^+(\widehat{f})\geq i(\Lambda)^2\widehat{f}(\mathbf{1})\] 
	and we obtain
	\[
	i(\Lambda) \leq \frac{f(e)}{\widehat{f}(\mathbf{1})}
	\]
	as $i(\Lambda)>0$.
	Now the bound on $\triangle_{\mathrm{prob}}(r,X)$ follows directly from Proposition \ref{p:formula_density}. And thus by Corollary \ref{c:equiv_defs_optimal_density} we obtain the result.
\end{proof}

\begin{remark}
	In the absence of a invariant pointwise ergodic theorem, the method above still yields estimates for the probabilistic optimal packing density.
\end{remark}
\subsection{A curious observation}
\begin{proposition}
	Let $G$ be a (connected) simple Lie group without compact factors and finite center and $K$ a maximal compact subgroup. Let $d$ be the Cartan-Killing metric and let $G=KAN$ be an Iwasawa decompositions of $G$ as in Subsection \ref{ss:noncompt_ss_Lie_harmonic}. Then $\mathfrak{a}$ with metric $d_{\mathfrak{a}}$ induced by the Killing form and the Haar measure $m_\mathfrak{a}$ with the standard normalization is a Euclidean space and $(\mathfrak{a}, \{0\}, d_\mathfrak{a}, \mathscr{S}(\mathfrak{a}))$ is a convenient Gelfand pair. If $f\in \mathcal{W}_r(G/K)$, then the Abel transform $\mathcal{A}f$ is in $\mathcal{W}_r(\mathfrak{a})$.
\end{proposition}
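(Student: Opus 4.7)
The plan is to verify the three witness-function axioms (W1)--(W3) for $\mathcal{A}f$ with respect to the Euclidean Gelfand pair $(\mathfrak{a},\{0\},d_{\mathfrak{a}},\mathscr{S}(\mathfrak{a}))$. The Schwartz-space condition (W3) is essentially formal: the commutative diagram in Subsection~\ref{ss:noncompt_ss_Lie_harmonic} (applied to $p=1$) yields that $\mathcal{A}$ sends $\mathscr{S}^1(G,K)$ into $\mathscr{S}(\mathfrak{a})^W \subset \mathscr{S}(\mathfrak{a})$, so $\mathcal{A}f\in\mathscr{S}(\mathfrak{a})$.

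For (W1) I would start from the definition
\[
\mathcal{A}f(H)=e^{\rho(H)}\int_N f(\exp(H)n)\,dm_N(n)
\]
and invoke the classical comparison inequality
\[
d(\exp(H)n\cdot x_0,\,x_0)\ \geq\ |H|_{\mathfrak{a}}\qquad(H\in\mathfrak{a},\ n\in N),
\]
which expresses the fact that the KAK-radial component of a group element dominates its Iwasawa $\mathfrak{a}$-component in Killing-form norm. This inequality is standard; it admits a short Cauchy--Schwarz proof against the positive restricted roots, and it is also a consequence of Kostant's convexity theorem or of the nonpositive curvature of $G/K$. Once it is available, $|H|_{\mathfrak{a}}\geq 2r$ forces $f(\exp(H)n)\leq 0$ for every $n\in N$ by (W1) applied to $f$, and the positive weight $e^{\rho(H)}$ then preserves the sign.

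For (W2), I would use the identity $\mathcal{H}=\mathcal{F}\circ\mathcal{A}$: for every real $\xi\in\mathfrak{a}^*$ the principal-series spherical function $\varphi_\xi$ is positive-definite, so
\[
\widehat{\mathcal{A}f}(\xi)=\mathcal{F}(\mathcal{A}f)(\xi)=\mathcal{H}f(\xi)=\widehat{f}(\varphi_\xi)\ \geq\ 0
\]
by (W2) for $f$. This yields the non-negativity half. The strict positivity $\widehat{\mathcal{A}f}(\mathbf{1}_{\mathfrak{a}})>0$ amounts to $\widehat{f}(\varphi_0)>0$, which is \emph{not} literally the hypothesis $\widehat{f}(\mathbf{1}_G)=\widehat{f}(\varphi_{-i\rho})>0$, since $\varphi_0$ and $\varphi_{-i\rho}$ sit at genuinely different points of the Helgason--Johnson tube $\mathscr{F}^1$.

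Bridging the two positivity conditions is where I expect the main difficulty to lie: holomorphicity of $\widehat{f}$ on the interior of $\mathscr{F}^1$ together with non-negativity on the positive-definite parameter set does not by itself rule out an interior zero at $\varphi_0$. In the rank-one case one can argue directly from the explicit hypergeometric formula for $\varphi_0$ recalled in Remark~\ref{r:harmonic_analysis_hyperbolic_space}; in higher rank, one seems to have to exploit additionally the Weyl-symmetry of $\widehat{f}$ and the Paley--Wiener / Trombi--Varadarajan image description, which is the technically substantive step I would expect the proof to carry out.
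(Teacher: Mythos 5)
Your proposal follows essentially the same route as the paper: (W1) is deduced from the inequality $d(x_0,na x_0)\geq d(x_0,ax_0)$ (which the paper cites as \cite[Chapter VI, Exercise B2.(iv)]{Helgason2}), and the non-negativity half of (W2) from $\mathcal{H}=\mathcal{F}\circ\mathcal{A}$ together with positive-definiteness of $\varphi_\xi$ for real $\xi$; the (W3) step via the Trombi--Varadarajan diagram is also as you describe. Concerning the difficulty you flag --- that $\widehat{\mathcal{A}f}(\mathbf{1}_{\mathfrak{a}})=\mathcal{F}(\mathcal{A}f)(0)=\widehat{f}(\varphi_0)$ is the value at the Harish-Chandra $\Xi$-function rather than at the trivial character $\varphi_{\pm i\rho}$, so that strict positivity does not follow formally from $\widehat{f}(\mathbf{1})>0$ --- the paper's own proof is silent on this point: it establishes only $\mathcal{F}(\mathcal{A}f)=\mathcal{H}(f)\geq 0$ on $\mathfrak{a}^*$ and stops. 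So your argument reproduces everything the paper actually proves, and the step you could not complete is precisely the one the paper does not supply either; you have correctly isolated a genuine gap in the stated proposition rather than a defect of your own approach.
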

\begin{proof}
	\cite[Chapter VI, Exercise B2.(iv)]{Helgason2} states that
	$d(x_0,nax_0)\geq d(x_0, ax_0)$
	for all $a\in A, n\in N$.
	Hence, if $d(ax_0, x_0)>2r$ we have that $d(anx_0,x_0)>2r$. Thus, for $H\in \mathfrak{a}$, 
	\[\mathcal{A}f(H) = e^{\rho(H)}\int_Nf(\exp(H)n)dm_N(n)\leq 0,\]
	if $\sqrt{\kappa(H, H)} = d(\exp(H)x_0,x_0)>2r$.
	Moreover $\mathcal{F}(\mathcal{A}f) = \mathcal{H}(f)\geq0$ on $\mathfrak{a}^*$.
\end{proof}
\appendix
\section{The spherical Bochner-Schwartz theorem for the Heisenberg space}\label{S:Bochner-Schwarz appendix}
Recall that $\exp:\mathfrak{h}_n\to \mathcal{H}_n$ is a diffeomorphism and that the Schwartz space $\mathscr{S}(\mathcal{H}_n)$ is defined by 
\[\mathscr{S}(\mathcal{H}_n)\coloneqq \{f\circ \exp^{-1}\mid f\in \mathscr{S}(\mathfrak{h}_n)\}.\]
Recall further tht we defined
\[\mathscr{S}(\mathcal{H}_n, U(n))\coloneqq \{f\in \mathscr{S}(\mathcal{H}_n)\mid f(t,kv) = f(t,v)\text{ for all $k\in U(n)$}\}.\]
Our aim in this appendix is a proof of the following theorem.
\begin{theorem}\label{Bochner Schwartz Heisenberg}
	Let $T$ be a positive-definite distribution on $\mathcal{H}_n\rtimes U(n)$.
	Then there exists a unique Borel measure $\mu$ on $PS(\mathcal{H}_n\rtimes U(n), U(n))$ and a continuous functional $\tilde{T}$ on the space $\mathscr{S}(\mathcal{H}_n, U(n))$ with the subspace topology coming from $\mathscr{S}(\mathcal{H}_n)$, such that
	\[
	\tilde{T}f = \int \widehat{f}d\mu
	\]
	for all $f\in \mathscr{S}(\mathcal{H}_n, U(n))$.
	Note that this property uniquely defines $\tilde{T}$ and that the Godement-Plancherel theorem for distributions forces $\mu = \widehat{T}$.
\end{theorem}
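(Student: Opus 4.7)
The plan is to reduce the theorem to a tempered growth estimate for $\widehat{T}$ and then assemble the extension from the Plancherel--Godement identity. By Theorem~\ref{t:Godement-Plancherel_for_distributions} there exists a unique Borel measure $\widehat{T}$ on $PS(\mathcal{H}_n\rtimes U(n), U(n))$ with
\[
T[f\ast g^\ast] \;=\; \int \widehat{f}\,\overline{\widehat{g}}\, d\widehat{T}
\]
for $f,g\in C_c^\infty(\mathcal{H}_n\rtimes U(n), U(n))$, so the remaining task is to extend $T$ to a continuous functional $\tilde T$ on $\mathscr{S}(\mathcal{H}_n\rtimes U(n), U(n))$ satisfying the desired integral identity. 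The uniqueness claim in the statement is then immediate from the uniqueness half of Theorem~\ref{t:Godement-Plancherel_for_distributions}, which forces the representing measure to be $\widehat{T}$.

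The core technical content is to show that the spherical transform induces a continuous map $\mathscr{S}(\mathcal{H}_n\rtimes U(n), U(n))\to L^1(\widehat{T})$. I would proceed in two steps. First, using the Benson--Jenkins--Ratcliff parametrization from Theorem~\ref{t:parametrization_spherical_heisenberg}, identify $PS$ with $((\R\setminus\{0\})\times \Z_+)\sqcup [0,\infty)$, and express $\widehat{f}$ as an integral transform against the explicit Laguerre--Gaussian and Bessel kernels. Classical asymptotics for Laguerre and Bessel functions, combined with the fact that $f\circ\exp$ is a Euclidean Schwartz function on $\mathfrak{h}_n$, yield that $\widehat{f}$ decays faster than any polynomial in the parameters $(\lambda,m,\tau)$, with bounds controlled by Schwartz seminorms of $f\circ\exp$. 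Second, establish polynomial growth of $\widehat{T}$: fix $\phi\in C_c^\infty(\mathcal{H}_n\rtimes U(n), U(n))$ with $\widehat{\phi}$ strictly positive on each fixed pre-compact subset of $PS$, and consider perturbations $D\phi$ where $D$ ranges over $U(n)$-invariant differential operators whose spherical transforms act as multiplication by polynomials in $(\lambda,m,\tau)$; the sublaplacian on $\mathcal{H}_n$ together with the central derivation $\partial_t$ generate enough such operators to separate the spectrum. Applying $T$ to $D\phi\ast(D\phi)^\ast$ and invoking Plancherel--Godement converts polynomial moments of $\widehat{T}$ into values of $T$ on compactly supported smooth functions, which are finite because $T$ is a distribution.

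Given the tempered bound, define $\tilde T(f)\coloneqq\int \widehat{f}\, d\widehat{T}$; absolute convergence and continuity in the Schwartz topology follow by matching Schwartz decay of $\widehat{f}$ against polynomial growth of $\widehat{T}$. To verify $\tilde T|_{C_c^\infty(\mathcal{H}_n\rtimes U(n), U(n))}=T$, I would apply Plancherel--Godement to $f\ast\psi_\epsilon\ast\psi_\epsilon^\ast$ for a compactly supported smooth approximate identity $\psi_\epsilon$ and pass to the limit using dominated convergence on the spectral side. The hardest step will be the polynomial growth estimate on $\widehat{T}$: the discrete-continuous type-(A) piece and the continuous type-(B) piece of the spectrum behave rather differently and must be controlled simultaneously, which is precisely why a single operator like the sublaplacian is not enough---one must combine it with $\partial_t$ to produce an algebra of operators whose spherical symbols separate the two spectral families.
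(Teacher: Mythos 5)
Your overall architecture matches the paper's: obtain $\widehat{T}$ from the Plancherel--Godement theorem, prove a tempered growth estimate for $\widehat{T}$, check that spherical transforms of Schwartz functions decay rapidly with seminorm control, define $\tilde{T}f=\int\widehat{f}\,d\widehat{T}$, and recover $T$ on $C_c^\infty$ by density. The first and last steps are fine. For the rapid decay of $\widehat{f}$ the paper does not argue via Laguerre/Bessel asymptotics but simply invokes the Astengo--Di Blasio--Ricci theorem (Theorem \ref{t:image_Schwartz_space_Heisenberg_group}), which identifies $\mathcal{G}(\mathscr{S}(\mathcal{H}_n,U(n)))$ with $\mathscr{S}(\R^{s+1})|_{\Sigma_n}$ via the eigenvalue embedding $\widehat{V}$ and supplies exactly the seminorm bounds needed for continuity of $\tilde T$; your route would amount to reproving part of that cited result, and you would also have to make your decay estimates in $(\lambda,m,\tau)$ compatible with whatever coordinates you use to measure the growth of $\widehat{T}$.

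The genuine gap is in your polynomial-growth step. Testing $T$ against $D\phi*(D\phi)^*$ for a single fixed $\phi\in C_c^\infty$ yields only $\int|\widehat{D}|^2\,|\widehat{\phi}|^2\,d\widehat{T}<\infty$, i.e.\ finiteness of all polynomial moments of the \emph{weighted} measure $|\widehat{\phi}|^2\,d\widehat{T}$, not of $\widehat{T}$. To convert this into $\widehat{T}(B_R)\lesssim R^M$ you must divide by $\inf_{B_R}|\widehat{\phi}|^2$, and since $\widehat{\phi}$ is rapidly decreasing this infimum will in general decay faster than any polynomial in $R$; "strictly positive on each fixed pre-compact subset'' gives no uniformity in $R$, so nothing prevents $\widehat{T}$ from growing super-polynomially while all your weighted moments remain finite. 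Without polynomial growth of $\widehat{T}$ you cannot even conclude $\widehat{f}\in L^1(\widehat{T})$ for Schwartz $f$, so $\tilde T$ is undefined. The paper's Lemma \ref{l:polynomial_growth_Godement_Plancherel_measure_Heisenberg} supplies precisely the missing uniformity by exploiting the dilations $D_r$: for $\chi=\phi*\phi^*$ the dilate $\chi_\epsilon=\epsilon^{-h}\chi\circ D_{1/\epsilon}$ has spherical transform bounded below by a fixed constant $K>0$ on the ball of radius $R/\epsilon$ (by homogeneity of $\widehat{V}$ under dilations and $\widehat{\chi}(\mathbf{1})>0$), while the distributional estimate $|T(\chi_\epsilon)|\leq C'\epsilon^{-h-2N}\norm{\chi}_N$ costs only a fixed power of $1/\epsilon$; comparing the two sides gives $\widehat{T}(B(0,r))\lesssim r^{2(n+1+N)}$. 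If you wish to avoid dilations you would instead need to construct compactly supported test functions whose spherical transforms admit a global lower bound by a fixed negative power of $(1+\abs{x})$, a construction your sketch does not provide.
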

For the proof we will use that the image of $\mathscr{S}(\mathcal H_n, U(n))$ under the spherical transform has been characterized by Astengo, Blasio and Ricci in \cite{AdBRSphericalSchwartz}.

\subsection{Schwartz spaces}
For $l\in \N$ let $\Sigma\subset \R^l$ be closed and set $I(\Sigma)=\{f\in\mathscr{S}(\R^l)\mid f|_\Sigma = 0\}$. 
We define $\mathscr{S}(\Sigma)\coloneqq \mathscr{S}(\R^l)/I(\Sigma)$ and equip $\mathscr{S}(\Sigma)$ with the quotient topology.

Given a seminorm $p$ on a vector space $X$ and a linear subspace $M$, we define $p_M$ on $X/M$ by 
\[
p_M([x])\coloneqq \inf \{p(x+y)\mid y\in M\}.
\]
For $m\in\N_0$ we define the Schwartz seminorms 
\[
p^{k}(f)\coloneqq \sup_{x\in \R^l, \abs{\alpha}\leq k} (1+\norm{x})^k\abs*{\frac{\partial^\alpha}{\partial x^\alpha}f(x)}.
\]

\begin{lemma}
	The topology on $\mathscr{S}(\Sigma)$ is the topology induced by the family $\{q^{k}\mid k\in \N_0\}$ of the seminorms $q^{k}\coloneqq p^{k}_{I(\Sigma)}$.
\end{lemma}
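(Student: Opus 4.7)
The plan is to reduce this to the standard fact from the theory of locally convex spaces that quotienting by a closed subspace gives a topology generated by the quotient seminorms.

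First I would verify that $I(\Sigma)$ is a closed subspace of $\mathscr{S}(\R^l)$. Since each seminorm $p^k$ dominates the uniform norm on every compact set (indeed $p^0$ already does), the evaluation functionals $\delta_x \colon f \mapsto f(x)$ are continuous on $\mathscr{S}(\R^l)$ for every $x \in \R^l$. Hence
\[
I(\Sigma) = \bigcap_{x \in \Sigma} \ker \delta_x
\]
is an intersection of closed subspaces and is therefore closed. This ensures that the quotient topology on $\mathscr{S}(\Sigma)$ is Hausdorff.

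Next I would invoke the general fact: if $X$ is a locally convex space whose topology is generated by a family of seminorms $\{p_\alpha\}$ and $M \subset X$ is a closed subspace, then the quotient topology on $X/M$ is generated by the quotient seminorms $\{(p_\alpha)_M\}$. Applied with $X = \mathscr{S}(\R^l)$ (topologized by $\{p^k\}_{k \in \N_0}$) and $M = I(\Sigma)$, this immediately gives the claim. For completeness I would sketch the proof of this general fact: writing $\pi \colon X \to X/M$ for the quotient map, one checks that the preimage of the basic open set $\{[x] : (p_\alpha)_M([x]) < \varepsilon\}$ is $\{x \in X : p_\alpha(x + y) < \varepsilon \text{ for some } y \in M\} = \{x : p_\alpha(x) < \varepsilon\} + M$, which is open in $X$, so the seminorm topology on $X/M$ is coarser than the quotient topology. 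Conversely, if $U \subset X$ is open with $\pi^{-1}(\pi(U)) = U + M = U$ (in which case $\pi(U)$ is open in the quotient topology), then for any $[x_0] \in \pi(U)$, pick $\alpha$ and $\varepsilon > 0$ with $x_0 + \{p_\alpha < \varepsilon\} \subset U$; then $[x_0] + \{(p_\alpha)_M < \varepsilon\} \subset \pi(U)$, showing openness in the seminorm topology.

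There is no genuine obstacle here; the lemma is a purely formal consequence of the definitions once one records that $I(\Sigma)$ is closed. The only point requiring care is not to confuse the infimum appearing in $(p_\alpha)_M$ with some other natural candidate (e.g.\ $\sup_{y \in M} p_\alpha(x+y)$), but the formulas above leave no ambiguity.
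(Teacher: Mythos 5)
Your proof is correct and follows the same route as the paper, which simply declares the lemma ``clear from the definition of quotient topology''; you have just written out the standard locally convex quotient argument in full (noting that the family $\{p^k\}$ is increasing in $k$, so a single seminorm suffices for basic neighborhoods). The closedness of $I(\Sigma)$ is only needed for Hausdorffness, not for the identification of the two topologies, but including it does no harm.
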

\begin{proof}
	This is clear from the definition of quotient topology.
\end{proof}

\begin{remark}
	There is a canonical isomorphism of algebras
	\[
	\mathcal{R}:\mathscr{S}(\Sigma)\to \{f|_\Sigma \mid f\in \mathscr{S}(\R^l)\}\eqqcolon \mathscr{S}(\R^l)|_{\Sigma},\quad [f]\mapsto f|_\Sigma
	\]
	and we will identify these two spaces when convenient (and in particular equip $\mathscr{S}(\R^l)|_{\Sigma}$ with the topology induced by this isomorphism).
	Given $f\in \mathscr{S}(\R^l)|_{\Sigma}$ and $\alpha,\beta\in \N_0^l$ we define
	\[
	\norm{f}_{k}\coloneqq q^{k}(\mathcal{R}^{-1}(f))
	\]
	and note that the topology on $\mathscr{S}(\R^l)|_{\Sigma}$ is induced by the family
	$\{\norm{\cdot}_{k}\}_{k\in \N_0}$.
\end{remark}
\subsection{Embeddings of the Gelfand spectrum}
From now on set $G\coloneqq \mathcal{H}_n\rtimes U(n)$ and $K\coloneqq U(n)$.
We give a more in-depth overview over the harmonic analysis on the Heisenberg group, based on \cite{AdBRSphericalSchwartz}.

	Let $\mathcal{F}_\lambda$ denote the Fock space consisting of entire functions on $\C^n$ such that
	\[
	\norm{F}^2_{\mathcal{F_\lambda}} = \left(\frac{\lambda}{2\pi}\right)^n\int_{\C^n}\abs{F(z)}^2e^{-\frac{\lambda}{2}\abs{z}^2}dz<\infty
	\]
	and define the Bargmann representation $\pi_\lambda$ of $G$ on $\mathcal{F}_\lambda$ by
	\[
	[\pi_\lambda(t,z)F](w)\coloneqq e^{i\lambda t}e^{-\frac{\lambda}{2}\langle w, \overline{z}\rangle-\frac{\lambda}{4}\abs{z}^2}F(w+z)
	\]
	and 
	\[
	\pi_{-\lambda}(t, z) \coloneqq \pi_\lambda(-t, z).
	\]
	The space $\mathcal{P}(\C^n)$ of polynomials on $\C^n$ is dense in $\mathcal{F}_\lambda$ and decomposes under the action of $K$ into $K$-irreducible subspaces 
	\[
	\mathcal{P}(\C^n) = \sum_{\alpha\in \Lambda}P_\alpha,
	\]
	with $\Lambda\subset \widehat{K}$.
	Let $\{v_1^\lambda,\dots, v_{\dim(P_\alpha)}^\lambda\}$ denote an orthonormal basis of $\mathcal{P}(\C^n)$ with respect to the Fock scalar product on $\mathcal{F}_\lambda$ and set
	\begin{align*}
	\phi_{\lambda,\alpha}(t, z)\coloneqq \frac{1}{\dim(P_\alpha)}\sum_{j=1}^{\dim(P_\alpha)}\langle \pi_\lambda(t,z)v_j^\lambda, v_j^\lambda\rangle_{\mathcal{F}_\lambda}.
	\end{align*}
	We also set
	\begin{align*}
		\eta_{Kw}(t, z)\coloneqq \int_K \exp(i\mathrm{Re}(\langle z, kw\rangle ))dm_K(k).
	\end{align*}
	Then 
	\[
	PS(G, K) = \{\eta_{Kw}\mid w\in \C^n\}\cup \{\phi_{\lambda,\alpha}\mid \lambda\in \R^*, \alpha\in \Lambda\}.
	\]
	We note that $\eta_{Kw}$ only depends on the length $\tau=\norm{w}$. By enumerating $\Lambda$ one can obtain the parametrization of the $U(n)$-spherical functions on the Heisenberg group in Theorem \ref{t:parametrization_spherical_heisenberg}.

Let $\mathbb{D}(G/K)$ denote the algebra of $G$-invariant differential operators on $G/K$. A differential operator $D\in \mathbb{D}(G/K)$ is called \emph{homogeneous of degree $m\in \C$}, if 
\[D(f\circ D_r) = r^mD(f)\circ D_r\]
for all $f\in C^\infty(G/K)$ and $r>0$.
\begin{theorem}[Astengo--Blasio--Ricci,\cite{AdBRSphericalSchwartz}]\label{t:differential_operators_heisenberg_group}
	There are differential operators $V_1,\dots, V_s\in \mathbb{D}(G/K)$ such that \[\{V_0\coloneqq-i\partial_t, V_1,\dots, V_s\}\] generate $\mathbb{D}(G/K)$ and 
	\begin{enumerate}[(i)]
		\item each $V_j$ is homogeneous of degree $2m_j$, with $m_j\in \N$ (and $m_0 = 1$),
		\item each $V_j$ is formally self-adjoint and $\widehat{V}_j(\phi_{1,\alpha})\in \N$ for each $\alpha\in \Lambda$,
		\item $\widehat{V}_j(\eta_{Kw}) = \rho_j(w,\overline{w})$ for every $w\in \C^n$, where $\rho_j$ is a nonnegative homogeneous polynomial of degree $2m_j$, nonzero away from the origin,
	\end{enumerate}
	Each element $\phi$ of $\mathrm{PS}(G,K)$ is smooth and a common eigenfunction of $V_1,\dots, V_s$ with real eigenvalues. 
	Moreover each $\phi\in PS(G, K)$ is uniquely determined by its eigenvalues $\widehat{V}_0(\phi),\dots, \widehat{V}_s(\phi)$ with respect to $V_0,\dots, V_s$. 
\end{theorem}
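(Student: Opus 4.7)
The plan is to construct the $V_j$ explicitly from $U(n)$-invariants of the Heisenberg Lie algebra, then verify (i)–(iii) and the uniqueness statement directly from the Bargmann--Fock picture of $PS(G,K)$ already recalled above.

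I would first identify $\mathbb{D}(G/K)$ with $\mathbb{D}(\mathcal H_n)^{U(n)}$, the algebra of $U(n)$-invariant left-invariant differential operators on $\mathcal H_n$. The symmetrization map is a $U(n)$-equivariant linear isomorphism $S(\mathfrak h_n)\to U(\mathfrak h_n)$, and since $\mathfrak h_n=\mathbb{R}T\oplus\mathbb{C}^n$ with $U(n)$ acting trivially on the centre and standardly on $\mathbb{C}^n$, the first fundamental theorem of invariant theory for the standard $U(n)$-representation yields $S(\mathfrak h_n)^{U(n)}=\mathbb{C}[T,\,\sum_{j=1}^n(X_j^2+Y_j^2)]$. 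I would then take $V_0=-i\partial_t$ and $V_1=-\sum_{j=1}^n(X_j^2+Y_j^2)$, the positively normalized Kohn sublaplacian, and check by induction on degree---using that symmetrization is a filtered linear isomorphism whose associated graded is the identity on $S(\mathfrak h_n)^{U(n)}$---that $V_0,V_1$ generate $\mathbb{D}(G/K)$ as an algebra; thus $s=1$ suffices. Property (i) is immediate: under the dilations $D_r(t,z)=(r^2t,rz)$ one has $D_r^\ast T=r^{-2}T$ and $D_r^\ast X_j=r^{-1}X_j$, so both $V_0$ and $V_1$ are homogeneous of degree $2$, giving $m_0=m_1=1$. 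Formal self-adjointness in (ii) holds because $\partial_t,X_j,Y_j$ are skew-adjoint with respect to the Haar measure on $\mathcal H_n$ (namely Lebesgue measure on $\mathbb R\times\mathbb C^n$). For the eigenvalues on $\phi_{1,\alpha}$, one differentiates the Bargmann formula: $d\pi_1(-i\partial_t)=\mathrm{id}$, while $d\pi_1(X_j\pm iY_j)$ are---up to normalization---the creation/annihilation operators on the Fock space $\mathcal F_1$, so $d\pi_1(V_1)$ coincides with the standard number operator and acts on the $U(n)$-isotype $\mathcal P_k\subset\mathcal P(\mathbb C^n)$ of homogeneous polynomials of degree $k$ by multiplication by $2k+n\in\mathbb N$. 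Property (iii) is verified for $V_1$ by direct calculation: the $\mathcal H_n$-character $\chi_w(t,z)=e^{i\mathrm{Re}\langle z,w\rangle}$ satisfies $V_1\chi_w=|w|^2\chi_w$, and the $K$-invariance of $V_1$ transfers this to $V_1\eta_{Kw}=|w|^2\eta_{Kw}$, giving $\rho_1(w,\bar w)=|w|^2$, a nonnegative homogeneous polynomial of degree $2$ vanishing only at the origin.

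For the remaining assertions, smoothness of every $\phi\in PS(G,K)$ is immediate from Theorem~\ref{t:parametrization_spherical_heisenberg}, which exhibits each spherical function as an explicit smooth expression in Laguerre or Bessel functions. The common eigenfunction property for $V_1$ then follows from $d\pi_\lambda(V_1)$ acting by a scalar on each $U(n)$-isotype (type~A) and from the direct computation above (type~B). The uniqueness of $\phi$ from $(\widehat V_0(\phi),\widehat V_1(\phi))$ is a short case analysis: if $\widehat V_0=\lambda\neq 0$ then $\phi$ is of type~A and $\widehat V_1=\lambda(2k+n)$ recovers the isotype $k=\alpha$; if $\widehat V_0=0$ then $\phi$ is of type~B and $\widehat V_1=|w|^2$ determines the $K$-orbit of $w$ and hence $\eta_{Kw}$. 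The main technical obstacle is the careful computation of $d\pi_1(V_1)$ on each $\mathcal P_k$: one must unwind the Bargmann formula, identify the resulting differential operator on $\mathcal F_1$ with the Hermite number operator, and check that the normalization lands the eigenvalue in $\mathbb N$ rather than in $\tfrac{1}{2}\mathbb N+c$; this is essentially the spectral content of the Laguerre identities underlying Theorem~\ref{t:parametrization_spherical_heisenberg}.
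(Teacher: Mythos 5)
The paper does not prove this statement at all: it is quoted verbatim from Astengo--Di Blasio--Ricci \cite{AdBRSphericalSchwartz}, where it is established for arbitrary Gelfand pairs $(\mathcal H_n\rtimes K, K)$ with $K\subset U(n)$, the generators $V_1,\dots,V_s$ being obtained from a Hilbert basis of the $K$-invariants in $S(\mathfrak h_n)$ via symmetrization. Your proposal instead proves the special case $K=U(n)$ directly, observing that the first fundamental theorem for the standard $U(n)$-action gives $S(\mathfrak h_n)^{U(n)}=\C[T,\sum_j(X_j^2+Y_j^2)]$, so that $s=1$ suffices with $V_0=-i\partial_t$ and $V_1$ the (sign-normalized) sublaplacian; the filtered symmetrization argument for generation, the homogeneity and self-adjointness checks, the eigenvalue computations $V_1\phi_{1,k}=(2k+n)\phi_{1,k}$ and $V_1\eta_{Kw}=\abs{w}^2\eta_{Kw}$, and the two-case uniqueness argument (type A versus type B distinguished by $\widehat V_0\neq 0$ versus $\widehat V_0=0$, then $\widehat V_1$ recovering $\alpha$ resp.\ $\abs{w}$) are all correct and suffice for the statement as it is used in this paper, where only $K=U(n)$ occurs. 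What the citation-level theorem buys beyond your argument is uniformity in $K$: for proper subgroups $K\subsetneq U(n)$ forming a Gelfand pair the invariant algebra is generally not a polynomial algebra in two generators, one genuinely needs $s>1$, and the nonnegativity and nonvanishing of the $\rho_j$ require the invariant-theoretic setup of \cite{AdBRSphericalSchwartz}; what your route buys is a short, self-contained and explicit verification in the only case needed here. Two small points to tidy up: the eigenvalue $2k+n$ (rather than a half-integer shift) depends on the convention $\tfrac12\operatorname{Im}\langle\cdot,\cdot\rangle$ in the group law and on the chosen basis vector fields, so the normalization check you flag should be carried out, noting that rescaling $V_1$ by a positive constant is harmless for (i) and (iii) but is exactly what is needed to land (ii) in $\N$; and in the uniqueness step you should say explicitly that the exhaustion of $PS(G,K)$ by the two families is Theorem \ref{t:parametrization_spherical_heisenberg}, since without it the eigenvalue data would only separate points within the known families rather than determine $\phi$ among all positive-definite spherical functions.
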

By \cite{AdBRSphericalSchwartz} the map 
\[
\widehat{V}: BS(G, K) \to \R^{s+1},\ \phi \mapsto (\widehat{V}_0(\phi), \dots, \widehat{V}_s(\phi))
\]
is well-defined and a homemorphism onto its image 
\[
\Sigma_n\coloneqq \widehat{V}(\mathrm{BS}(G, K))=\widehat{V}(\mathrm{PS}(G, K)).
\]
$\Sigma_n$ is a closed subset of $\R^{s+1}$ and the Gelfand transform $f\mapsto \widehat{f}$ defines a map 
\[	
\mathcal{G}:\mathcal{S}(\mathcal{H}_n, U(n))\to \mathrm{Map}(\Sigma_n, \C),\ f\mapsto \widehat{f}\circ \widehat{V}^{-1}.
\]
\begin{theorem}[Astengo--Blasio--Ricci, \cite{AdBRSphericalSchwartz}]\label{t:image_Schwartz_space_Heisenberg_group}
	The map
	\[
	\mathcal{G}: \mathscr{S}(\mathcal{H}_n, U(n))\to \mathscr{S}(\R^{s+1})|_{\Sigma_n},\quad f \mapsto \mathcal{G}(f)
	\]
	is a topological isomorphism. 
	More specifically, for each $p\in \N_0$ there exists a $F_p\in \mathscr{S}(\R^{s+1})$ and $q\in \N$, both depending on $p$ sucht that $F_p|_{\Sigma_n} = \widehat{f}$ and $\norm{F_p|_{\Sigma_n}}_{p}\leq C_p \norm{f}_q$ for $C_p > 0$. 
\end{theorem}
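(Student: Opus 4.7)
The plan combines the Plancherel--Godement theorem for distributions (Theorem \ref{t:Godement-Plancherel_for_distributions}) with the explicit description of bi-$U(n)$-invariant Schwartz functions on $\mathcal{H}_n$ due to Astengo, Blasio and Ricci (Theorem \ref{t:image_Schwartz_space_Heisenberg_group}). First I would apply Theorem \ref{t:Godement-Plancherel_for_distributions} to the positive-definite distribution $T$ to produce the unique regular Borel measure $\mu:=\widehat{T}$ on $PS(G,K)$ satisfying $T(f*g^*)=\int\widehat{f}\,\overline{\widehat{g}}\,d\widehat{T}$ for $f,g\in C_c^\infty(G,K)$; this disposes of the uniqueness clause for $\mu$. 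Via the homeomorphism $\widehat{V}$ of Theorem \ref{t:differential_operators_heisenberg_group} we view $\widehat{T}$ as a Borel measure on the closed set $\Sigma_n\subset\R^{s+1}$.

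The central step is to establish that $\widehat{T}$ is a \emph{tempered} measure on $\R^{s+1}$, meaning that $\int_{\Sigma_n}(1+\|\xi\|^2)^{-N}\,d\widehat{T}(\xi)<\infty$ for some $N\in\N$. My strategy would exploit the amenability of $G=\mathcal{H}_n\rtimes U(n)$: for any $\phi\in C_c^\infty(G,K)$, the convolution $\psi:=T*\phi*\phi^*$ is a continuous bi-$K$-invariant positive-definite function on $G$ (being a matrix coefficient of the GNS representation of $T$), so by the classical spherical Bochner theorem for the Gelfand pair $(G,K)$ it equals $\int\omega(\cdot)\,d\mu_\phi(\omega)$ for a positive finite Borel measure $\mu_\phi$ on $PS(G,K)$. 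Testing against $h\in C_c^\infty(G,K)$ and comparing with the Plancherel--Godement identity for $T(h*\phi*\phi^*)$ identifies $\mu_\phi=|\widehat{\phi}|^2\,\widehat{T}$, so $\int|\widehat{\phi}|^2\,d\widehat{T}=\mu_\phi(PS(G,K))<\infty$ for every $\phi\in C_c^\infty(G,K)$. Starting from a smooth partition of unity on $\R^{s+1}$ rescaled by factors $(1+\|\xi_\alpha\|^2)^{-N/2}$, using the surjectivity of $\mathcal{G}$ in Theorem \ref{t:image_Schwartz_space_Heisenberg_group} together with density of $C_c^\infty(G,K)$ in $\mathscr{S}(\mathcal{H}_n,U(n))$, one produces a countable family $(\phi_\alpha)\subset C_c^\infty(G,K)$ with $\sum_\alpha|\widehat{\phi_\alpha}|^2\geq c(1+\|\xi\|^2)^{-N}$ pointwise on $\Sigma_n$ while keeping $\sum_\alpha\int|\widehat{\phi_\alpha}|^2\,d\widehat{T}<\infty$; this yields the tempered bound.

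With $\widehat{T}$ tempered, define $\tilde{T}(f):=\int_{\Sigma_n}\widehat{f}\,d\widehat{T}$ for $f\in\mathscr{S}(\mathcal{H}_n,U(n))$. Absolute convergence follows from the Schwartz decay of $\widehat{f}$ on $\Sigma_n$ provided by Theorem \ref{t:image_Schwartz_space_Heisenberg_group}, and continuity of $\tilde{T}$ in the Schwartz subspace topology follows from the seminorm estimate $\|F_p|_{\Sigma_n}\|_p\leq C_p\|f\|_q$ of the same theorem combined with the polynomial-decay bound on $\widehat{T}$. To verify $\tilde{T}|_{C_c^\infty(G,K)}=T|_{C_c^\infty(G,K)}$, I would use an approximate identity $(\psi_n)\subset C_c^\infty(G,K)$ with $\widehat{\psi_n}$ uniformly bounded and converging pointwise to the constant function $1$: for $f\in C_c^\infty(G,K)$ one has $T(f*\psi_n^*)\to T(f)$ by distributional continuity of $T$ on the group side, while $T(f*\psi_n^*)=\int\widehat{f}\,\overline{\widehat{\psi_n}}\,d\widehat{T}\to\int\widehat{f}\,d\widehat{T}$ by dominated convergence (with integrable majorant $C|\widehat{f}|$, using temperedness of $\widehat{T}$ and Schwartz decay of $\widehat{f}$). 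Combining these identifies $T(f)=\tilde T(f)$ on $C_c^\infty(G,K)$, and forces $\mu=\widehat T$ by the uniqueness in Theorem \ref{t:Godement-Plancherel_for_distributions}.

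The main obstacle is the temperedness step: the bound $\int|\widehat{\phi}|^2\,d\widehat{T}<\infty$ only controls $\widehat{T}$ against Schwartz weights, so converting this into a uniform polynomial lower bound requires a careful use of the Astengo--Blasio--Ricci isomorphism to produce a family of test functions in $C_c^\infty(G,K)$ whose joint transform moments effectively dominate a power of $(1+\|\xi\|^2)^{-1}$ on all of $\Sigma_n$; this combinatorial-analytic construction is the technical heart of the argument.
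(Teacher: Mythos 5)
Your proposal does not address the statement it was supposed to prove; it proves a different theorem and, in doing so, assumes the statement itself. The statement in question is the Astengo--Blasio--Ricci characterization (which the paper does not prove but cites from \cite{AdBRSphericalSchwartz}): that $\mathcal{G}\colon \mathscr{S}(\mathcal{H}_n, U(n))\to \mathscr{S}(\R^{s+1})|_{\Sigma_n}$ is a topological isomorphism, together with the quantitative estimate $\norm{F_p|_{\Sigma_n}}_{p}\leq C_p\norm{f}_q$. What you sketch instead is the spherical Bochner--Schwartz theorem for the Heisenberg motion group (Theorem \ref{t:heisenberg_bochner_schwartz}, i.e.\ Theorem \ref{Bochner Schwartz Heisenberg} of the appendix): you take a positive-definite distribution $T$, argue that its Godement--Plancherel measure is tempered, and extend $T$ to a continuous functional on the Schwartz space. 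In that argument you invoke the surjectivity of $\mathcal{G}$ (to build the family $(\phi_\alpha)$) and the seminorm bound of Theorem \ref{t:image_Schwartz_space_Heisenberg_group} (to get continuity of $\tilde{T}$) as known inputs --- but those are precisely the content of the theorem you were asked to establish, so relative to the target the argument is circular and its actual content is never engaged.

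A genuine proof of the statement would have to show: (a) that for every $f\in\mathscr{S}(\mathcal{H}_n,U(n))$ the function $\widehat{f}\circ\widehat{V}^{-1}$ on $\Sigma_n$ admits an extension to a Schwartz function on $\R^{s+1}$ (in Astengo--Blasio--Ricci this is the hard direction, resting on Hulanicki-type theorems on Schwartz functional calculus for the generators $V_0,\dots,V_s$ and Whitney-type extension arguments); (b) that conversely every element of $\mathscr{S}(\R^{s+1})|_{\Sigma_n}$ is the transform of such an $f$; and (c) continuity in both directions, equivalently the stated seminorm estimates. None of this appears in your proposal. As a side remark, your sketch of the Bochner--Schwartz theorem is in outline close to the paper's appendix proof, except that the paper obtains temperedness of $\widehat{T}$ via the dilation argument of Lemma \ref{l:polynomial_growth_Godement_Plancherel_measure_Heisenberg} rather than your partition-of-unity construction; but that is a different statement from the one under review.
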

\begin{remark} 
	Note that Theorem \ref{t:differential_operators_heisenberg_group} implies that
	\[\widehat{V}(\eta_{Kw}\circ D_r) = \widehat{V}(\eta_{Krw}) = (0, r^{2m_j}\widehat{V}_1(\eta_{Kw}),\dots,r^{2m_s}\widehat{V}_s(\eta_{Kw}))\]
	and
	\[\widehat{V}(\phi_{\lambda,\alpha}) = (\abs{\lambda}\widehat{V}_0(\phi_{1,\alpha}), \abs{\lambda}^{m_1}\widehat{V}_1(\phi_{1,\alpha},\dots, \abs{\lambda}^{m_s}\widehat{V}_s(\phi_{1,\alpha})))\]
	for all $\lambda\neq 0$ and $\alpha\in \Lambda$. As 
	\[\phi_{\lambda, \alpha} = \phi_{1,\alpha}\circ D_{\sqrt{\lambda}}\] for $\lambda>0$ and
	\[\phi_{\lambda, \alpha} = \overline{\phi_{1,\alpha}}\circ D_{\sqrt{\abs{\lambda}}}\] for $\lambda<0$,
	we see that
	\[\widehat{V}(\phi\circ D_r) = (r^2\widehat{V}_0(\phi), r^{2m_1}\widehat{V}_1(\phi),\dots, r^{2m_s}\widehat{V}_s(\phi))\]
	for any $\phi\in PS(G,K)$ and $r>0$.
\end{remark}
\subsubsection{Proof of the spherical Bochner-Schwartz theorem}
\begin{lemma}\label{l:polynomial_growth_Godement_Plancherel_measure_Heisenberg}
	Let $T$ be a positive definite bi-$U(n)$-invariant distribution on $\mathcal{H}_n\rtimes U(n)$ and let $\mu$ denote the Godement-Plancherel measure of $T$.
	Then there exists a positive polynomial $Q$ on $\R^{s+1}$ such that 
	\[
	\int_{\mathrm{PS}(G, K)} \frac{1}{Q\circ\widehat{V}} \ d\mu<\infty
	\]
\end{lemma}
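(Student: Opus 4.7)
The plan is to combine the local finite-order property of $T$, the Plancherel--Godement identity $T(f^**f) = \int |\widehat f|^2\, d\mu$ for $f\in C_c^\infty(G,K)$, and the quasi-homogeneous dilation structure of $\mathcal H_n$ on the Gelfand spectrum side.

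First I would exploit bi-$U(n)$-invariance: restricted to $C_c^\infty(G,K)$, the distribution $T$ becomes continuous, and by Theorem \ref{t:differential_operators_heisenberg_group} the algebra $\mathbb D(G/K)$ is generated by $V_0,\ldots,V_s$. Hence there exist a relatively compact open neighborhood $O$ of $e$, an integer $N\ge 0$ and a constant $C>0$ with
\[
|T(h)|\;\le\; C\max_{|\alpha|\le N}\|V^\alpha h\|_\infty\qquad (h\in C_c^\infty(G,K),\ \mathrm{supp}(h)\subset O).
\]
Fix a non-negative $f_0\in C_c^\infty(G,K)$ supported in a Cygan--Koranyi ball $B(e,\delta)$ with $\widehat{f_0}(\mathbf 1)>0$. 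Then $|\widehat{f_0}|^2\ge c_0>0$ on some open neighborhood $U_0$ of $\mathbf 1 \in PS(G,K)$, and since $\widehat V(\mathbf 1) = 0$ the image $\widehat V(U_0)$ is a neighborhood of $0\in \Sigma_n$, containing a quasi-homogeneous box $\{\lambda:|\lambda_j|\le\delta'\}\cap \Sigma_n$.

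The main step is to propagate this local estimate across $PS(G,K)$ via the dilations $D_r$, which extend to automorphisms of $G$ acting trivially on $U(n)$. For $r\in(0,1]$, set $f_r\coloneqq f_0\circ D_{1/r}$; the compatibility $\|D_r g\|=r\|g\|$ of the Cygan--Koranyi norm forces $\mathrm{supp}(f_r)\subset B(e,\delta)$, so $\mathrm{supp}(f_r^**f_r)\subset O$ for every such $r$. Using that each $V_j$ is homogeneous of degree $2m_j$ and pushing $V^\alpha$ through the convolution via the Leibniz rule, one obtains $\|V^\alpha(f_r^**f_r)\|_\infty\lesssim r^{c-2\sum_j m_j\alpha_j}$ for some $c$ independent of $\alpha$, and hence by the distribution bound and Plancherel--Godement,
\[
\int |\widehat{f_r}|^2\,d\mu \;=\; T(f_r^**f_r)\;\le\; C'\,r^{-M},\qquad M\coloneqq 2N\max_j m_j - c.
\]
On the spectral side, the dilation covariance $\widehat{f_r}(\phi)=c'(r)\,\widehat{f_0}(\phi\circ D_r)$ together with the identity $\widehat V(\phi\circ D_r)=(r^{2m_j}\widehat V_j(\phi))_{j=0}^s$ from the remark preceding the lemma shows that $|\widehat{f_r}|^2\ge c_1(r)>0$ on $\widehat V^{-1}(E_{1/r})$, where $E_R\coloneqq \{\lambda\in \Sigma_n:|\lambda_j|\le \delta' R^{2m_j}\text{ for all }j\}$ is the quasi-homogeneous box of radius $R$. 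Combining both estimates yields $\mu(\widehat V^{-1}(E_R))\le C R^{M'}$ for some explicit $M'\in\mathbb N$.

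To conclude I would run a dyadic-shell decomposition: writing $\Sigma_n=\bigsqcup_{k\ge 0}(E_{2^{k+1}}\setminus E_{2^k})$, the shell estimate $\mu\circ \widehat V^{-1}(E_{2^{k+1}}\setminus E_{2^k})\lesssim 2^{kM'}$ follows, while on such a shell the polynomial $Q(\lambda)=(1+|\lambda|^2)^L$ grows at least as fast as $2^{2kL\min_j m_j}$. Choosing $L$ large enough in terms of $M'$, $\min_j m_j$ and $s$ makes $\int 1/(Q\circ \widehat V)\,d\mu$ into a convergent geometric series. The chief obstacle is the anisotropic bookkeeping: since dilations scale the coordinates of $\Sigma_n$ with \emph{different} weights $2m_j$, the natural geometry of the spectrum is quasi-homogeneous rather than Euclidean, and the interplay between these scaling weights and the order $N$ of $T$ must be tracked carefully in order to produce the final exponent $L$. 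A secondary subtlety is verifying the scalar factor $c'(r)$ in the dilation-covariance formula, which amounts to keeping track of how the Haar measure on $\mathcal H_n\rtimes U(n)$ transforms under $D_r$.
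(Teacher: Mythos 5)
Your proposal is correct and follows essentially the same route as the paper: dilate a fixed bump function $\chi=\phi*\phi^*$ by $D_{1/\epsilon}$, bound $T(\chi_\epsilon)$ from above by $C\epsilon^{-h-2N}$ using the finite-order continuity estimate, bound $\int\widehat{\chi_\epsilon}\,d\mu$ from below by $c\,\mu\bigl(\widehat{V}^{-1}(B(0,R/\epsilon))\bigr)$ using the dilation covariance $\widehat{V}(\omega\circ D_\epsilon)=(\epsilon^{2m_j}\widehat{V}_j(\omega))_j$ and the positivity of $\mathcal{G}(\chi)=\abs{\mathcal{G}(\phi)}^2$ near $\mathbf{1}$, and conclude polynomial growth of $\widehat{V}_*\mu$, which gives the integrability of $1/Q$ for $Q$ of high enough degree. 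The only differences are cosmetic: the paper measures growth on Euclidean balls (exploiting $\epsilon^{2m_j}\le\epsilon^2$ for $\epsilon\le 1$) and states the order estimate in ordinary coordinate derivatives rather than in the operators $V^\alpha$, which avoids your anisotropic shell bookkeeping as well as the (unproved, though avoidable) claim that the local seminorms controlling $T$ on bi-$K$-invariant test functions can be taken of the form $\max_{\abs{\alpha}\le N}\norm{V^\alpha h}_\infty$.
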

\begin{proof}
	Let $\phi$ be a smooth function on $\mathcal{H}_n\rtimes U(n)$ with $B_{1/2}\subset \mathrm{supp}(\phi)\subset B_{1/2+\delta}$ (where the balls are with respect to the Cygan-Koranyi metric) and set 
	$\chi = \phi*\phi^*$. Then $B_{1}\subset \mathrm{supp}(\chi)$.
	Let $h$ denote the homogeneous dimension of $\mathcal{H}_n$ and
	for $\epsilon\leq 1$ set $\chi_\epsilon = \left(\frac{1}{\varepsilon}\right)^h\chi\circ D_{1/\varepsilon}$.
	It follows that there is some $N\in \N$ and $C>0,\ C'>0$ such that
	\begin{align*}
		T(\chi_\varepsilon) &\leq C \norm{\chi_\varepsilon}_N\\
		&= C\sup\{\abs{\partial^\alpha\chi_\epsilon(t,z)}\mid (t,z)\in \mathcal{H}_n, \alpha\in \mathbb{N}_0^{2n+1},\abs{\alpha}\leq N\}\\
		&\leq C'\epsilon^{-h-2N}\norm{\chi}_N,
	\end{align*}
	where the family $(\norm{\cdot}_N)_{N\in \N}$ of seminorms defined by
	\[\norm{f}_N\coloneqq\sup\{\abs{\partial^\alpha f(x)}\mid x\in \mathcal{H}_n, \alpha\in \mathbb{N}_0^{2n+1},\abs{\alpha}\leq N\}\]
	induces the topology on $C_c^\infty(\mathcal{H}_n)$.
		
	We also have that
	\begin{align*}
		T(\chi_\epsilon) &= \int \widehat{\chi}_\varepsilon d\mu
		= \left(\frac{1}{\epsilon}\right)^h \int \int \chi\circ D_{1/\epsilon}(g)\omega(g^{-1})\,dm_G(g)\,d\mu(\omega)\\
		&= \left(\frac{1}{\epsilon}\right)^h\int \int \left(\frac{1}{\epsilon}\right)^{-h}\chi(g)\omega(D_{\epsilon}(g))\,dm_G(g)\,d\mu(\omega)\\
		&= \int \int \chi(g)\omega(D_{\epsilon}(g))\,dm_G(g)\,d\mu(\omega)\\
		&= \int \widehat{\chi}(\omega\circ D_{\epsilon})d\mu(\omega)\\
		&= \int \mathcal{G}(\chi)(\widehat{V}(\omega\circ D_{\epsilon}))\, d\mu\\
		&= \int \mathcal{G}(\chi)(\epsilon^2\widehat{V}_0(\phi),\epsilon^{2m_1}\widehat{V}_1(\phi),\dots, \epsilon^{2m_s}\widehat{V}_s(\phi))\, d\mu\\
		&= \int \mathcal{G}(\chi)(\epsilon^2x_0,\epsilon^{2m_1}x_1,\dots, \epsilon^{2m_s}x_s)\, d\widehat{V}_*\mu(x_0,\dots, x_s)\\
		&\geq \int_{\left\{x_0^2+\dots x_s^2\leq R^2\frac{1}{\epsilon^2}\right\}} \mathcal{G}(\chi)(\epsilon^2x_0,\epsilon^{2m_1}x_1,\dots, \epsilon^{2m_s}x_s)\, d\widehat{V}_*\mu(x_0,\dots, x_s)\\
		&\geq \mu\left(B\left(0, \frac{R}{\epsilon}\right)\right)\inf\{G(\chi)(\varepsilon^2x_0,\varepsilon^{2m_1}x_1,\dots, \varepsilon^{2m_s}x_s)\mid x_0^2+x_1^2+\dots x_s^2\leq R^2\epsilon^{-2}\}\\
		&\geq \mu\left(B\left(0, \frac{R}{\epsilon}\right)\right) \inf\{G(\chi)(x_0,x_1,\dots, x_s)\mid x_0^2+x_1^2+\dots x_s^2\leq R^2\}\\
		&= K\mu\left(B\left(0, \frac{R}{\epsilon}\right)\right),
	\end{align*}
	with
	\[
	K \coloneqq \inf\{\mathcal{G}(\chi)(x)\mid x\in B(0,R)\}.
	\]
	Since 
	\[
	\mathcal{G}(\chi) = \abs{\mathcal{G}(\phi)}^2 \geq 0
	\]
	and 
	\[
	\mathcal{G}(\chi)(0) = \widehat{\chi}(\mathbbm{1})>0
	\]
	we can choose $R>0$, independently from $\epsilon$, such that $K$ is positive (as $\mathcal{G}(\chi)$ is continuous).
	Substituting $\frac{r}{R}=(1/\epsilon)$ and $h=2n+2$ we get
	\[
	\mu(B(0, r))\leq \frac{C'}{K}\norm{\chi}_N\left(\frac{r}{R}\right)^{(2n+2+2N)} = L r^{2(n+1+N)}
	\]
	for some $L>0$, independent of $r$. This implies the claim.
\end{proof}
\begin{proof}[Proof of the Bochner-Schwartz theorem]
	Consider the map
	\[\tilde{T}:\mathscr{S}(\mathcal{H}_n,U(n))\to \C,\quad  f\mapsto \int \widehat{f}d\mu.\]
	By Lemma \ref{l:polynomial_growth_Godement_Plancherel_measure_Heisenberg} and Theorem \ref{t:image_Schwartz_space_Heisenberg_group}, this map is well-defined.
	More precisely, as for any Schwartz function $F$, any positive polynomial $P$ and any subset $A\subset \R^k$ there is a constant $C>0$ with \[\sup_{x\in A}\abs{F(x)P(x)}\leq C,\]  we have 
	\[\abs{F(x)}\leq \frac{C}{P(x)}.\] This implies that the integral $\int\widehat{f}d\mu$ is well-defined for any $f\in \mathscr{S}(\mathcal{H}_n, U(n))$, as $\mu$ has polynomial growth.
	Choosing a Schwartz function $F_0$ and $q$ as in Theorem \ref{t:image_Schwartz_space_Heisenberg_group} restricting to $\mathcal{G}(f)$, we see that
	\begin{align*}
		\abs{\tilde{T}f}&\leq \int \abs{\widehat{f}}d\mu = \int \abs{\mathcal{G}(f)}d\widehat{V}_*\mu=\int \abs{F_0}d\widehat{V}_*\mu\\
		&\leq \int \frac{\norm{F_0Q}_0}{Q}d\widehat{V}_*\mu \leq \int \frac{1}{Q}d\widehat{V}_*\mu \cdot\norm{Q}_0\cdot C_0\norm{f}_q
	\end{align*}
	and thus we see that $\tilde{T}$ induces a well-defined (continuous) functional on $\mathscr{S}(\mathcal{H}_n, U(n))$ satisfying
	\[\tilde{T}f = \int \widehat{f}d\mu\]
	for all $f\in \mathscr{S}(\mathcal{H}_n, U(n))$.
	Moreover 
	\[\tilde{T}(f^**f) = T(f^**f)\]
	for all $f\in C_c^\infty(\mathcal{H}_n, U(n))$. As these functions form a dense subset of $\mathscr{S}(\mathcal{H}_n, U(n))$ (with respect to the topology on $\mathscr{S}(\mathcal{H}_n)$), given a function $g\in \mathscr{S}(\mathcal{H}_n, U(n))$, we can choose a functions $g_n\in C_c^\infty(\mathcal{H}_n, U(n))$ such that $g_n\to g$ in $C_c^\infty(\mathcal{H}_n)$.
	As the inclusion $C_c^\infty(\mathcal{H}_n)\to \mathscr{S}(\mathcal{H}_n)$ is continuous (as this is true for Euclidean space), we see that $g_n\to g$ in $\mathscr{S}(\mathcal{H}_n)$ and thus $\tilde{T}g = Tg$.
\end{proof}

\bibliographystyle{amsplain}
\providecommand{\bysame}{\leavevmode\hbox to3em{\hrulefill}\thinspace}
\providecommand{\MR}{\relax\ifhmode\unskip\space\fi MR }
% \MRhref is called by the amsart/book/proc definition of \MR.
\providecommand{\MRhref}[2]{%
  \href{http://www.ams.org/mathscinet-getitem?mr=#1}{#2}
}
\providecommand{\href}[2]{#2}

\end{document}